\newtheorem{mthm}{Theorem}
\newtheorem{thm}{Theorem}[section]
\newtheorem{lem}[thm]{Lemma}
\newtheorem{prop}[thm]{Proposition}
\newtheorem{defpro}[thm]{Definition and Proposition}
\newtheorem{lemdef}[thm]{Proposition and Definition}
\newtheorem{cor}[thm]{Corollary}
\theoremstyle{definition}
\theoremstyle{remark}
\newtheorem{remark}[thm]{Remark}
\newtheorem{remarks}[thm]{Remarks}
\newtheorem{example}[thm]{Example}
\newtheorem{examples}[thm]{Examples}
\newcommand{\fina}{\hspace*{\fill}$\square$}
\newcounter{substep}
\def\thesubstep{\arabic{substep}}
\newenvironment{substeps}[1]{%
\refstepcounter{substep}\noindent{(\ref{#1}.\thesubstep)\ }\ }%
{\em}
 \newcommand{\R}{{\mathbb R}}
\newcommand{\Q}{{\mathbb Q}}
\newcommand{\sph}{{\mathbb S}}
\newcommand{\gtp}{{\mathfrak p}} \newcommand{\gtq}{{\mathfrak q}}
\newcommand{\gtm}{{\mathfrak m}} \newcommand{\gtn}{{\mathfrak n}}
\newcommand{\gta}{{\mathfrak a}}
\newcommand{\Dd}{{\EuScript D}}
\newcommand{\Zz}{{\EuScript Z}}
\newcommand{\bs}{{\EuScript B}}
\newcommand{\Bb}{{\EuScript B}}
\newcommand{\im}{\operatorname{im}}
\newcommand{\qf}{\operatorname{qf}}
\newcommand{\Int}{\operatorname{Int}}
\newcommand{\dist}{\operatorname{dist}}
\newcommand{\hgt}{\operatorname{ht}}
\newcommand{\Spec}{\operatorname{Spec}}
\newcommand{\ceros}{\operatorname{\mathcal Z}}
\newcommand{\gr}{\operatorname{graph}}
\newcommand{\cl}{\operatorname{Cl}}
\newcommand{\di}{\operatorname{{\mathcal D}}}
\newcommand{\dgt}{\operatorname{d}}
\newcommand{\lc}{\operatorname{lc}}
\newcommand{\diam}{{\text{\tiny$\displaystyle\diamond$}}}
\newcommand{\gtmd}{\operatorname{\gtm^{\diam}\hspace{-1.5mm}}}
\newcommand{\gtnd}{\operatorname{\gtn^{\diam}\hspace{-1.5mm}}}
\newcommand{\Specs}{\operatorname{Spec_s}}
\newcommand{\Speca}{\operatorname{Spec_s^*}}
\newcommand{\Specd}{\operatorname{Spec_s^{\diam}}}
\newcommand{\betas}{\operatorname{\beta_s\!}}
\newcommand{\betaa}{\operatorname{\beta_s^*\!\!}}
\newcommand{\betad}{\operatorname{\beta_s^{\diam}\!\!}}
\newcommand{\x}{{\tt x}} \newcommand{\y}{{\tt y}} 
\newcommand{\z}{{\tt z}} \renewcommand{\t}{{\tt t}}
\newcommand{\s}{{\tt s}}
\newcommand{\veps}{\varepsilon}
\newcommand{\ol }{\overline}
\numberwithin{equation}{section}
\begin{document}

\title[On spectral types of semialgebraic sets]{On spectral types of semialgebraic sets}

\author{Jos\'e F. Fernando}
\author{J.M. Gamboa}
\address{Departamento de \'Algebra, Facultad de Ciencias Matem\'aticas, Universidad Complutense de Madrid, 28040 MADRID (SPAIN)}
\curraddr{}
\email{josefer@mat.ucm.es, jmgamboa@mat.ucm.es}
\thanks{Authors supported by Spanish GR MTM2011-22435}

% \date is required; it is the date received by the editor.
%\date{October 21, 2008}
\subjclass[2010]{Primary 14P10, 54C30; Secondary 12D15, 13E99}
\keywords{Ring of semialgebraic functions, ring of bounded semialgebraic functions, Zariski spectrum, maximal spectrum, semialgebraic homeomorphism, homeomorphism, bricks, local compactness, free maximal ideal associated with a formal path, free maximal ideal associated with a semialgebraic path}

\begin{abstract}
In this work we prove that a semialgebraic set $M\subset\R^m$ is determined (up to a semialgebraic homeomorphism) by its ring ${\mathcal S}(M)$ of (continuous) semialgebraic functions while its ring ${\mathcal S}^*(M)$ of (continuous) bounded semialgebraic functions only determines $M$ besides a distinguished finite subset $\eta(M)\subset M$. In addition it holds that the rings ${\mathcal S}(M)$ and ${\mathcal S}^*(M)$ are isomorphic if and only if $M$ is compact. On the other hand, their respective maximal spectra $\betas M$ and $\betaa M$ endowed with the Zariski topology are always homeomorphic and topologically classify a `large piece' of $M$. The proof of this fact requires a careful analysis of the points of the remainder $\partial M:=\betaa M\setminus M$ associated with formal paths.
\end{abstract}

\maketitle

\section*{Introduction}\label{s1}

A semialgebraic set $M\subset\R^m$ is a (finite) boolean combination of sets defined by polynomial equations and inequalities. A continuous map $f:M\to\R^n$ is \emph{semialgebraic} if its graph is a semialgebraic subset of $\R^{m+n}$; as usual $f$ is a \em semialgebraic function \em if $n=1$. The sum and product of functions defined pointwise endow the set ${\mathcal S}(M)$ of semialgebraic functions on $M$ with a natural structure of a unital commutative ring. In fact ${\mathcal S}(M)$ is an $\R$-algebra and the subset ${\mathcal S}^*(M)$ of bounded semialgebraic functions on $M$ is an $\R$-subalgebra of ${\mathcal S}(M)$. 

It is well known that the rings ${\mathcal S}(M)$ and ${\mathcal S}^*(M)$ are particular cases of the so-called \em real closed rings \em introduced by Schwartz in the 1980s, see \cite{s0}. The theory of real closed rings has been developed in a fruitful attempt to establish new foundations for semi-algebraic geometry with relevant interconnections to model theory, see the results of Cherlin-Dickmann \cite{cd1,cd2}, Schwartz \cite{s0,s1,s2,s3}, Schwartz with Prestel, Madden and Tressl \cite{ps,sm,scht} and Tressl \cite{t0,t1,t2}. We refer the reader to \cite{s1} for a ring theoretic analysis of the concept of a real closed ring. This theory, which vastly generalizes the classical techniques concerning the semialgebraic spaces of Delfs-Knebusch \cite{dk2}, provides a powerful machinery to approach problems concerning certain rings of real valued functions and contributes to achieve a better understanding of the algebraic properties of such rings and the topological properties of their spectra. We highlight some relevant families of real closed rings: (1) real closed fields; (2) rings of real-valued continuous functions on Tychonoff spaces; (3) rings of semi-algebraic functions on semi-algebraic sets; and more generally (4) rings of definable continuous functions on definable sets in o-minimal expansions of fields.

\subsection*{Main results} 
The main purpose of this work is to analyze to what extend the rings ${\mathcal S}(M)$ and ${\mathcal S}^*(M)$ classify the semialgebraic set $M$ up to a semialgebraic homeomorphism, that is, a semialgebraic map that is also a homeomorphism; of course, the inverse of such a map is also a semialgebraic map. Recall that \em homeomorphic \em and \em semialgebraically homeomorphic \em are not the same notion: Shiota and Yokoi presented two compact homeomorphic semialgebraic sets that are not semialgebraically homeomorphic, see \cite{ys}.

We show in Theorem \ref{unbound1} that the mentioned classification holds for ${\mathcal S}(M)$. In Theorem \ref{bound1} we prove that the ring ${\mathcal S}^*(M)$ classifies $M$ up to a semialgebraic homeomorphism and besides the finite set $\eta(M)$ consisting of those points of $M$ having an open neighborhood in $M$ that is semialgebraically homeomorphic to the interval $[0,1)$. In the following $M\subset\R^m$ and $N\subset\R^n$ always denote semialgebraic sets. 

\begin{mthm}[Spectral type]\label{unbound1}
The rings ${\mathcal S}(N)$ and ${\mathcal S}(M)$ are isomorphic if and only if the semialgebraic sets $N$ and $M$ are semialgebraically homeomorphic.
\end{mthm}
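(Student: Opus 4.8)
The plan is to reconstruct the semialgebraic set $M$ from the ring $\mathcal{S}(M)$ in a purely algebraic way, following the classical pattern of the Gelfand–Kolmogorov theorem for rings of continuous functions but adapted to the semialgebraic setting. The key observation is that an isomorphism $\varphi\colon\mathcal{S}(N)\to\mathcal{S}(M)$ of $\R$-algebras (any ring isomorphism between these $\R$-algebras is automatically $\R$-linear, since $\R$ sits inside each ring as the set of elements possessing square roots of themselves plus enough order-theoretic rigidity, or more simply because $\Q$ is fixed and each ring is real closed so $\R$ is the real closure of $\Q$ inside it) induces a homeomorphism between suitable subspaces of the prime or maximal spectra. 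First I would recall that the points of $M$ are recovered as a distinguished class of maximal ideals of $\mathcal{S}(M)$: for $p\in M$ the ideal $\gtm_p=\{f\in\mathcal{S}(M): f(p)=0\}$ is maximal with residue field $\R$, and conversely every maximal ideal with residue field $\R$ that is moreover "semialgebraically bounded" in an appropriate sense (having a neighbourhood basis of semialgebraic sets on which the functions outside the ideal are bounded away from zero) is of this form. One must characterize, intrinsically in ring-theoretic terms, exactly which maximal ideals correspond to points of $M$ versus points of the remainder; this is where one uses that $\mathcal{S}(M)$ — unlike $\mathcal{S}^*(M)$ — sees the difference between, say, a point at finite distance and a "point at infinity" because unbounded functions exist precisely near the latter.

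The main structural step is to show that $M$, with its semialgebraic topology, is homeomorphic to the subspace $\Max^\ast\mathcal{S}(M)$ of such "real, finite" maximal ideals equipped with the Zariski topology, and that this identification is functorial: an isomorphism $\varphi$ carries $\Max^\ast\mathcal{S}(N)$ onto $\Max^\ast\mathcal{S}(M)$ and hence yields a homeomorphism $h\colon M\to N$. For this I would use that the Zariski-closed sets in this subspace are exactly the zero sets $\Zz(f)\cap M$ of semialgebraic functions, and that these generate the semialgebraic topology of $M$ (every semialgebraic subset being a finite boolean combination of such zero sets, and closed semialgebraic sets being precisely zero sets of semialgebraic functions — a standard fact). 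Thus the Zariski topology on $\Max^\ast$ recovers not just the topology but the semialgebraic structure in the sense that $\Zz$-sets are exactly traces of closed semialgebraic sets.

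Having a homeomorphism $h\colon M\to N$, the final and genuinely semialgebraic step is to upgrade it to a \emph{semialgebraic} homeomorphism. The point is that $\varphi$ does not merely move points: for each $g\in\mathcal{S}(N)$ and each $p\in M$ one has $\varphi(g)(p)=g(h(p))$ because $\varphi$ respects residue fields at real maximal ideals. Hence $g\circ h=\varphi(g)\in\mathcal{S}(M)$ for every semialgebraic $g$ on $N$; taking $g$ to be the restrictions to $N$ of the coordinate functions $\x_1,\dots,\x_n$ of $\R^n$ shows that each component of $h$ is a semialgebraic function on $M$, so $h$ is semialgebraic. Symmetrically $h^{-1}$ is semialgebraic, and therefore $h$ is a semialgebraic homeomorphism. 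The converse implication is immediate: a semialgebraic homeomorphism $h\colon M\to N$ induces the $\R$-algebra isomorphism $g\mapsto g\circ h$ from $\mathcal{S}(N)$ to $\mathcal{S}(M)$, with inverse $f\mapsto f\circ h^{-1}$.

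I expect the main obstacle to be the intrinsic ring-theoretic characterization of the "point" maximal ideals — that is, separating within $\Max\mathcal{S}(M)$ the maximal ideals coming from points of $M$ from those in the remainder $\betas M\setminus M$, without reference to $M$ itself. The natural candidate is: $\gtm$ corresponds to a point iff $\mathcal{S}(M)/\gtm=\R$ and $\gtm$ is not the support of a "divergent" behaviour, which one can try to phrase via the ideal being a countable intersection of its powers' radicals, or via the existence of a semialgebraic function that is a unit outside $\gtm$ yet lies in every larger "bounded" localization — in practice one invokes that $f\in\mathcal{S}(M)$ is bounded near $p\in M$ but unbounded functions accumulate at points of the remainder, and that local compactness of $M$ at $p$ distinguishes these cases. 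Getting this characterization exactly right, and checking that isomorphisms preserve it, is the crux; once it is in place the topological and semialgebraic parts follow the classical template with only routine semialgebraic-geometry input.
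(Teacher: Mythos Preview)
Your outline has the right shape, and the last step---pushing the coordinate projections $\pi_i$ through the isomorphism to show that the components of the induced bijection are semialgebraic---is exactly the paper's Lemma~\ref{hsa}.

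The genuine gap is the one you yourself flag: you never actually supply the intrinsic characterization of the fixed maximal ideals of $\mathcal{S}(M)$. Your candidate criteria (countable intersections of radicals, ``support of divergent behaviour'', bounded localizations) remain speculative, and none is pinned down, let alone shown to be preserved under an arbitrary ring isomorphism. A clean characterization does exist---$\gtm$ is fixed if and only if $\mathcal{S}(M)/\gtm=\R$, since for a free $\gtm$ an unbounded function such as $1/\dist(\cdot,p)$ with $p\in\cl_{\R^m}(M)\setminus M$ produces an infinitely large element of the quotient---but you would still have to state and prove it, and then verify that the resulting subspace of $\betas M$ inherits the Euclidean topology of $M$. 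As written, the proposal is an outline with its central lemma missing.

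The paper sidesteps this by a different and in fact stronger argument. Theorem~\ref{unbound0} shows that \emph{any} homeomorphism $\gamma\colon\Specs(N)\to\Specs(M)$ of the full Zariski spectra (not only one induced by a ring isomorphism) satisfies $\gamma(N)=M$. The proof runs by induction on $\dim N$ via the brick decomposition of~\ref{bricks}: a Spec-homeomorphism preserves heights of primes, hence matches the closures of the top-dimensional bricks, and peeling those off lowers the dimension. On a single pure-dimensional brick the fixed maximal ideals are then singled out as those of maximal height, the free ones having strictly smaller height by~\ref{freefixed} together with the Krull-dimension results of~\cite{fg2}. So the paper replaces the global residue-field criterion you were reaching for by a brick-by-brick height criterion, which has the bonus of yielding the purely topological Theorem~\ref{unbound0}.
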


\begin{mthm}[Bounded spectral type]\label{bound1}
The rings ${\mathcal S}^*(N)$ and ${\mathcal S}^*(M)$ are isomorphic if and only if the semialgebraic sets $N\setminus\eta(N)$ and $M\setminus\eta(M)$ are semialgebraically homeomorphic.
\end{mthm}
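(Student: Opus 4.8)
The plan is to mimic the strategy behind Theorem \ref{unbound1}, but with $\mathcal{S}^*$ in place of $\mathcal{S}$; the difference is that $\mathcal{S}^*(M)$ sees only the compactification $\betaa M$ rather than $M$ itself, so one must account for the points of $M$ that become indistinguishable from boundary points after compactifying. First I would recall that $\eta(M)$ is closed and discrete in $M$, hence finite, so $M\setminus\eta(M)$ is again a semialgebraic set and $\mathcal{S}^*(M\setminus\eta(M))$ makes sense. The easy implication is that a semialgebraic homeomorphism $\varphi\colon M\setminus\eta(M)\to N\setminus\eta(N)$ induces by composition a ring isomorphism $\varphi^*\colon\mathcal{S}^*(N\setminus\eta(N))\to\mathcal{S}^*(M\setminus\eta(M))$; so the work is to show first that $\mathcal{S}^*(M)\cong\mathcal{S}^*(M\setminus\eta(M))$ and second that a ring isomorphism $\mathcal{S}^*(N)\cong\mathcal{S}^*(M)$ forces $M\setminus\eta(M)$ and $N\setminus\eta(N)$ to be semialgebraically homeomorphic.

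For the first point, if $p\in\eta(M)$ has a neighbourhood $U_p\cong[0,1)$, then every bounded semialgebraic function on $M\setminus\{p\}$ extends uniquely (and continuously) across $p$, because along the end $[0,1)$ a bounded semialgebraic function of one variable has a limit at $0$; iterating over the finitely many points of $\eta(M)$ gives a canonical isomorphism $\mathcal{S}^*(M)\xrightarrow{\sim}\mathcal{S}^*(M\setminus\eta(M))$. Consequently $\betaa M$ and $\betaa(M\setminus\eta(M))$ are canonically homeomorphic, and the remainder $\partial M$ is the same for both; in effect passing to $M\setminus\eta(M)$ deletes exactly the ``free'' endpoints of $M$ that $\mathcal{S}^*$ cannot detect. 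This reduces the theorem to proving that for semialgebraic sets with $\eta=\varnothing$, a ring isomorphism $\mathcal{S}^*(N)\cong\mathcal{S}^*(M)$ implies $M$ and $N$ are semialgebraically homeomorphic.

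For the second point I would argue spectrally. A ring isomorphism $\mathcal{S}^*(N)\cong\mathcal{S}^*(M)$ induces a homeomorphism $\betaa N\to\betaa M$ of maximal spectra with the Zariski topology, and (as stated in the abstract) $\betaa M$ and $\betas M$ are always homeomorphic, with $\betas M$ a semialgebraic compactification of $M$. The key is an intrinsic, purely topological-algebraic characterisation of which points of $\betaa M$ belong to $M$ itself: a point of $\partial M=\betaa M\setminus M$ fails to have a compact neighbourhood, or rather one uses the finer analysis of the remainder points associated with formal and semialgebraic paths alluded to in the abstract to separate $M$ (minus its free ends, which is automatic once $\eta(M)=\varnothing$) from $\partial M$. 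Having recovered $M$ from $\betaa M$ in a way that is preserved by Zariski homeomorphisms, the induced bijection $M\to N$ is a homeomorphism; finally one upgrades it to a \emph{semialgebraic} homeomorphism exactly as in the proof of Theorem \ref{unbound1}, using that the isomorphism of rings transports semialgebraic functions to semialgebraic functions and hence carries the semialgebraic structure across.

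The main obstacle is the second point: identifying $M$ inside $\betaa M$ by ring-theoretic (equivalently, Zariski-topological) data alone. Unlike the case of $\betas M$ where $M$ is the locus of points with a countable neighbourhood basis, after compactifying bounded functions the free ends of $M$ behave like interior-approaching remainder points, and one must show that once $\eta(M)$ has been removed no such pathology remains — this is precisely where the careful study of the points of $\partial M$ coming from formal paths enters, and it is the technical heart of the argument.
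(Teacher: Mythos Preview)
Your easy direction is fine and matches the paper (Proposition~\ref{seteta}(iii)). The hard direction, however, has a genuine gap: you pass from the ring isomorphism to a homeomorphism of the \emph{maximal} spectra $\betaa N\to\betaa M$ and then try to recognise $M\setminus\eta(M)$ inside $\betaa M$ using the structure of the remainder. This is precisely the strategy of Theorem~\ref{betabound}, and it only succeeds in recovering $M_{\lc}\setminus\eta(M)$, not all of $M\setminus\eta(M)$; for non-locally-compact $M$ the points of $\rho_1(M)$ cannot be separated from $\partial M$ by any argument that uses only the topology of $\betaa M$. Your parenthetical claim that in $\betas M$ the subset $M$ is characterised as the set of points with a countable neighbourhood basis is also false: since $\betas M\cong\betaa M$, Proposition~\ref{gpn3} shows that every free maximal ideal associated with a formal path also has a countable basis of neighbourhoods, so this criterion does not isolate $M$. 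The formal-path analysis you invoke is developed in the paper exactly to handle Theorem~\ref{betabound}, and even with it one does not get the full statement you need.

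The paper's argument avoids all of this by keeping the \emph{full} Zariski spectrum. A ring isomorphism $\mathcal S^*(M)\cong\mathcal S^*(N)$ yields a homeomorphism $\Speca(N)\to\Speca(M)$, and one then characterises $N\setminus\eta(N)$ order-theoretically inside $\Speca(N)$ as the set of closed points admitting at least two immediate predecessors (Theorem~\ref{bound0}): a point $p\in N\setminus\eta(N)$ has two distinct semialgebraic half-branches through it, giving two coheight-one primes below $\gtn_p^*$; a point of $\eta(N)$ has only one; and a free maximal ideal $\gtn^*$ has a unique predecessor because the primes between $\gtn\cap\mathcal S^*(N)$ and $\gtn^*$ form a chain. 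Since the specialisation order is topological, this characterisation transfers under the spectral homeomorphism, and Lemma~\ref{hsa} then upgrades the resulting bijection to a semialgebraic homeomorphism. You should restructure your hard direction around $\Speca$ rather than $\betaa$.
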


In Theorem \ref{boundunbound1} we analyze how two semialgebraic sets $M$ and $N$ are related if we know that ${\mathcal S}(N)$ and ${\mathcal S}^*(M)$ are isomorphic. 

\begin{mthm}[Comparison]\label{boundunbound1}
The rings ${\mathcal S}(N)$ and ${\mathcal S}^*(M)$ are isomorphic if and only if the semialgebraic set $N$ is compact and the semialgebraic sets $N\setminus\eta(N)$ and $M\setminus\eta(M)$ are semialgebraically homeomorphic.
\end{mthm}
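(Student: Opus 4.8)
The plan is to reduce this statement to the two theorems already at hand, Theorem \ref{unbound1} and Theorem \ref{bound1}, together with the advertised criterion that $\mathcal{S}(M)\cong\mathcal{S}^*(M)$ precisely when $M$ is compact. First I would prove the following key observation: if $N$ is compact, then every bounded semialgebraic function on $N$ is already all of $\mathcal{S}(N)$, so $\mathcal{S}(N)=\mathcal{S}^*(N)$ as $\R$-algebras; this is immediate since a continuous semialgebraic function on a compact semialgebraic set is bounded (its image is a compact, hence bounded, semialgebraic subset of $\R$). Hence, if $N$ is compact and $N\setminus\eta(N)$ and $M\setminus\eta(M)$ are semialgebraically homeomorphic, then by Theorem \ref{bound1} we get $\mathcal{S}^*(N)\cong\mathcal{S}^*(M)$, and combining with $\mathcal{S}(N)=\mathcal{S}^*(N)$ yields $\mathcal{S}(N)\cong\mathcal{S}^*(M)$. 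This settles the ``if'' direction.

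For the ``only if'' direction, suppose $\Phi:\mathcal{S}(N)\to\mathcal{S}^*(M)$ is an $\R$-algebra isomorphism. The first task is to show $N$ is compact. The idea is that compactness of $N$ must be detectable from the ring $\mathcal{S}(N)$ in a way that distinguishes it from rings of the form $\mathcal{S}^*(M)$: for a non-compact $N$ the ring $\mathcal{S}(N)$ contains an \emph{unbounded} function, i.e.\ an element $f$ such that $\frac{1}{1+f^2}$ is not a unit (equivalently, $f$ is not ``bounded'' in the order-theoretic sense $|f|\le n$ for some $n\in\N$), whereas in $\mathcal{S}^*(M)$ every element $g$ satisfies $|g|\le n$ for some integer $n$, and this boundedness is an intrinsic ring-theoretic property — it can be phrased as: for every $g$ there exists $n\in\N$ with $n^2-g^2$ a sum of squares in the ring, or equivalently $n-g$ and $n+g$ lie in the set of elements that are squares times units, using the real closedness of these rings. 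Since an isomorphism preserves such a first-order/equational condition on all elements, $\mathcal{S}(N)\cong\mathcal{S}^*(M)$ forces every element of $\mathcal{S}(N)$ to be bounded, which by a standard argument (e.g.\ the characterization of compactness via $\mathcal{S}(N)=\mathcal{S}^*(N)$) means $N$ is compact. Once $N$ is compact we again have $\mathcal{S}(N)=\mathcal{S}^*(N)$, so $\mathcal{S}^*(N)\cong\mathcal{S}^*(M)$, and Theorem \ref{bound1} gives that $N\setminus\eta(N)$ and $M\setminus\eta(M)$ are semialgebraically homeomorphic, completing the proof.

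The main obstacle I anticipate is making rigorous the claim that ``boundedness of every element'' is preserved by ring isomorphism and is equivalent to compactness of $N$. One must be careful: the naive statement ``$|f|\le n$ for some $n$'' refers to the pointwise order, which a priori is not visible to an abstract isomorphism; the fix is to recover the order intrinsically. In $\mathcal{S}(N)$ (a real closed ring), the partial order is given by $f\ge 0\iff f$ is a square, and ``$f$ bounded by the integer $n$'' becomes ``$n-f$ and $n+f$ are squares'', a purely ring-theoretic condition; thus an isomorphism $\Phi$ maps bounded elements to bounded elements and conversely, so if $\mathcal{S}^*(M)$ has all elements bounded then so does $\mathcal{S}(N)$. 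It then remains to invoke the (presumably standard, and available in the body of the paper) equivalence: $N$ is compact $\iff$ every $f\in\mathcal{S}(N)$ is bounded $\iff\mathcal{S}(N)=\mathcal{S}^*(N)$. With that in place the argument closes cleanly; the remaining steps are just bookkeeping with Theorem \ref{bound1}.
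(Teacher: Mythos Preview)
Your proof is correct, and the ``if'' direction is identical to the paper's. For the ``only if'' direction you take a genuinely different and more elementary route than the paper. The paper deduces compactness of $N$ from Theorem~\ref{boundunbound0}, which works at the level of spectra: it shows that any homeomorphism $\Specs(N)\cong\Speca(M)$ forces $N$ to be compact, via a delicate argument comparing heights of free maximal ideals and isolated points in the two spectra. You instead argue directly at the ring level: an element $f$ is bounded iff $n-f$ and $n+f$ are squares for some positive integer $n$, a condition manifestly preserved by any ring isomorphism (no need to assume $\R$-linearity, since the integers are fixed automatically); since every element of ${\mathcal S}^*(M)$ satisfies this, so does every element of ${\mathcal S}(N)$, whence ${\mathcal S}(N)={\mathcal S}^*(N)$ and $N$ is compact by Lemma~\ref{compact0}. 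Your approach is shorter and avoids the spectral machinery entirely; the paper's route buys a bit more, since Theorem~\ref{boundunbound0} applies to arbitrary homeomorphisms of spectra rather than only to those induced by ring isomorphisms, and also yields Theorem~\ref{boundunbound0} itself as a result of independent interest.
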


Moreover, the Zariski spectra $\Specs(M)$ of ${\mathcal S}(M)$ and $\Speca(M)$ of ${\mathcal S}^*(M)$ are homeomorphic if and only if the rings ${\mathcal S}(M)$ and ${\mathcal S}^*(M)$ are isomorphic. This happens if and only if $M$ is compact (see Theorem \ref{boundunbound0}) or equivalently if both rings coincide (see Lemma \ref{compact0}). These results explain to what extend the rings ${\mathcal S}(M)$ and ${\mathcal S}^*(M)$ determine the topology of $M$. Our last result deals with maximal ideals instead of prime ideals of these rings. Indeed, the maximal spectra $\betas M$ of ${\mathcal S}(M)$ and $\betaa M$ of ${\mathcal S}^*(M)$, also called the semialgebraic Stone-C\v{e}ch compactification of $M$, are always homeomorphic (see \ref{homeo}) but the involved homeomorphism is not natural from a categorical point of view \cite[3.6]{fg3}. However, $\betaa M$ `almost' topologically classifies $M$. We denote the set of points of $M$, which have a compact neighborhood in $M$, with $M_{\lc}$. 

\begin{mthm}[Maximal spectral type]\label{betabound}
Let $\gamma:\betaa N\to\betaa M$ be a homeomorphism. Then the restriction map $\gamma\,|:N_{\lc}\setminus\eta(N_{\lc})\to M_{\lc}\setminus\eta(M_{\lc})$ is also a homeomorphism.
\end{mthm}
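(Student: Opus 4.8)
The plan is to pin down the subset $M_{\lc}\setminus\eta(M_{\lc})$ of the compact Hausdorff space $\betaa M$ by a condition that involves only the topology of $\betaa M$; the same condition will then describe $N_{\lc}\setminus\eta(N_{\lc})$ inside $\betaa N$, so a homeomorphism $\gamma\colon\betaa N\to\betaa M$ must carry one onto the other. I begin with two elementary facts about $M_{\lc}$. Since $M$ is dense in the compact Hausdorff space $\betaa M$, a point $x\in M$ has a compact neighbourhood in $M$ if and only if $x\in\Int_{\betaa M}(M)$, so $M_{\lc}=\Int_{\betaa M}(M)$ is open in $\betaa M$. Moreover $M_{\lc}=M\setminus\overline{\overline{M}\setminus M}$ (closures in $\R^m$) is itself a semialgebraic set. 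Hence $M_{\lc}$ is an open subset of $\betaa M$ homeomorphic to a (locally compact) semialgebraic set.

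Next set, purely in terms of the topological space $\betaa M$,
\[
W_M:=\bigcup\{U\subseteq\betaa M:\ U\text{ open and homeomorphic to a semialgebraic set}\},
\]
and let $\eta(W_M)$ be the set of $p\in W_M$ that have an open neighbourhood homeomorphic to $[0,1)$ but none homeomorphic to $\R$ (equivalently: $p$ has an open neighbourhood $U$ and a homeomorphism $U\to[0,1)$ sending $p$ to $0$), in accordance with the definition of $\eta$. The key claim is
\[
W_M\setminus\eta(W_M)=M_{\lc}\setminus\eta(M_{\lc}),
\]
which I would obtain from: (i) $M_{\lc}\subseteq W_M$, clear from the first paragraph; (ii) $W_M\setminus M_{\lc}\subseteq\eta(W_M)$; and (iii) $\eta(W_M)\cap M_{\lc}=\eta(M_{\lc})$. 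Point (iii) is routine, since $M_{\lc}$ is open in $\betaa M$ and contained in $W_M$, so near a point of $M_{\lc}$ the local conditions in $W_M$ and in $M_{\lc}$ agree. Granting (i)--(iii) and putting $E:=W_M\setminus M_{\lc}$, we get $E\subseteq\eta(W_M)$, $E\cap M_{\lc}=\emptyset$, hence $\eta(W_M)=\eta(M_{\lc})\cup E$ and $W_M\setminus\eta(W_M)=(M_{\lc}\cup E)\setminus(\eta(M_{\lc})\cup E)=M_{\lc}\setminus\eta(M_{\lc})$.

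The heart of the matter is (ii), and I expect it to be the main obstacle; it is here that the analysis of the points of $\partial M=\betaa M\setminus M$ associated with formal paths, carried out in the earlier sections, enters. One must show: (a) no point of $M\setminus M_{\lc}$ has a neighbourhood in $\betaa M$ homeomorphic to a semialgebraic set; and (b) a point of $\partial M$ that does have such a neighbourhood is a ``free endpoint'', hence lies in $\eta(W_M)$. Both rest on the same mechanism: wherever $M$ approaches its Euclidean frontier, or $\betaa M$ approaches $\partial M$, with local dimension at least $2$, one produces a whole continuum of pairwise distinct formal-path points of $\partial M$, so there $\betaa M$ cannot be locally homeomorphic to any semialgebraic set. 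The only ``tame'' points of $\partial M$ are therefore the single points attached to the missing ends of the $1$-dimensional half-open arcs of $M$; such a point $p$ does have a neighbourhood homeomorphic to $[0,1)$, but none homeomorphic to $\R$, the latter being impossible because it would display $M$ locally as two half-open arcs meeting only at $p\in\partial M$, and a semialgebraic function equal to $0$ on one and $1$ on the other near $p$ admits no continuous extension to $p$.

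Finally, since the constructions of $W_M$, $\eta(W_M)$, $W_N$, $\eta(W_N)$ involve only the topologies of $\betaa M$ and $\betaa N$, the homeomorphism $\gamma$ carries $W_N\setminus\eta(W_N)$ onto $W_M\setminus\eta(W_M)$; by the key claim applied to $N$ and to $M$, the restriction of $\gamma$ is a homeomorphism $N_{\lc}\setminus\eta(N_{\lc})\to M_{\lc}\setminus\eta(M_{\lc})$, which is the assertion.
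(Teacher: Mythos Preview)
Your overall strategy---characterize $M_{\lc}\setminus\eta(M_{\lc})$ by a purely topological condition inside $\betaa M$ and then transport it by $\gamma$---is exactly the paper's, and your formal reduction to claims (i)--(iii) is correct. The paper's choice of condition, however, is ``has a \emph{metrizable} neighbourhood in $\betaa M$'' rather than ``has a neighbourhood homeomorphic to a semialgebraic set''; these turn out to cut out the same set (namely $M_{\lc}\cup(\cl_{\betaa M}(C)\setminus C)$, $C$ the union of bricks of dimension $\leq 1$), but metrizability is what the paper actually proves things about, via Lemma~\ref{mn} and the cited result \cite[5.17]{fg5}: for a closed semialgebraic $Z\subset M$, the space $\betaa Z$ is metrizable if and only if $Z^{\geq 2}$ is compact.

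The genuine gap is your step (ii). Your sketched mechanism---``a whole continuum of pairwise distinct formal-path points of $\partial M$''---does not by itself prevent a neighbourhood from being homeomorphic to a semialgebraic set: semialgebraic sets routinely contain a continuum of points, and the formal-path points $\widehat{\partial}M$ even all have countable neighbourhood bases (Proposition~\ref{gpn3}), so their mere presence is not an obstruction. The actual obstruction the paper exploits is the opposite phenomenon: when the local dimension is $\geq 2$ near the remainder, one can find nearby a closed $Z\subset M$ with $Z^{\geq 2}$ non-compact, and then $\betaa Z\cong\cl_{\betaa M}(Z)$ is \emph{not} metrizable by \cite[5.17]{fg5}; hence no neighbourhood of such a point can be metrizable, let alone homeomorphic to a semialgebraic set. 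This is precisely how Lemma~\ref{mn} shows that points outside $M_{\lc}\cup(\cl_{\betaa M}(C)\setminus C)$ have no metrizable neighbourhood, and then a direct case analysis handles the endpoints. Your argument would go through if you replaced the ``continuum of formal paths'' heuristic by this metrizability argument (or by any other proof that the relevant neighbourhoods fail first countability at some point), but as written the crucial implication is not established.
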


To prove this fact, a reasonable strategy would be to search for a topological condition that characterizes the points of $M$ among those of $\betaa M$. In \cite[9.6-7]{gj} the authors prove that if $X$ is a metrizable space, then $X$ is the set of $G_{\delta}$-points of the Stone--\v{C}ech compactification $\beta X$ of $X$. Thus, it would seem reasonable to follow a similar strategy. As we show in Lemma \ref{cn}, all points of $M$ have a countable basis of neighborhoods in $\betaa M$. However, as we prove in Proposition \ref{gpn3}, the same happens for a large subset of the remainder $\partial M:=\betaa M\setminus M$ constituted by free maximal ideals associated with formal paths (see Section \ref{s4}). In our setting the clue property is to `admit a metrizable neighborhood in $\betaa M$'. Referring to this, we characterized the semialgebraic sets $M$ whose maximal spectrum $\betaa M$ is a metrizable space in \cite[5.17]{fg5}: this happens for those semialgebraic sets whose maximal spectrum $\betaa M$ is homeomorphic to a semialgebraic set. In Lemma \ref{mn} we determine the set of points of $\betaa M$ that admit a metrizable neighborhood in $\betaa M$.

\subsection*{Structure of the article} 
In Section \ref{s2} we compile the preliminary terminology and results concerning Zariski and maximal spectra of rings of semialgebraic and bounded semialgebraic functions that we use along this work. Most of the results in Section \ref{s2} are collected from \cite{fe1,fg2,fg3,fg5} and presented without proofs. The reading can be started directly in Section \ref{s3} and referred to the preliminaries only when needed. Section \ref{s3} is devoted to the study of homeomorphisms between Zariski spectra of rings of semialgebraic and bounded semialgebraic functions on a semialgebraic set (see Theorems \ref{unbound0}, \ref{bound0} and \ref{boundunbound0}) and to obtain Theorems \ref{unbound1}, \ref{bound1} and \ref{boundunbound1} stated above as a byproduct. In Section \ref{s4} we study a large family of points of the remainder associated with formal paths; these points have a countable basis of neighborhoods in $\betaa M$. Finally, Section \ref{s5} is dedicated to the analysis of homeomorphisms between maximal spectra of rings of semialgebraic (and bounded semialgebraic) functions on a semialgebraic set as well as to prove Theorem \ref{betabound}.

\section{Preliminaries on spectra of rings of semialgebraic functions}\label{s2}

In the following $M\subset\R^m$ denotes a semialgebraic set. Some statements and results are simultaneously valid for ${\mathcal S}(M)$ and ${\mathcal S}^*(M)$. In such a case and to avoid unnecessary repetitions, we denote both rings with ${\mathcal S}^{\diam}(M)$. For each function $f\in{\mathcal S}^{\diam}(M)$ and each semialgebraic subset $N\subset M$ we denote $Z_N(f):=\{x\in N:\, f(x)=0\}$ and $D_N(f):=N\setminus Z_N(f)$. If $N=M$, we say that $Z_M(f)$ is the \em zero set \em of $f$. We denote the open ball of $\R^m$ with center $x$ and radius $\veps>0$ with $\Bb(x,\veps)$ and the corresponding closed ball with $\ol{\Bb}(x,\veps)$. Sometimes it will be useful to assume that the semialgebraic set $M$ we are working with is bounded. Such an assumption can be done without loss of generality because the semialgebraic homeomorphism
$$
h:\Bb(0,1)\to\R^m,\ x\mapsto\frac{x}{\sqrt{1-\|x\|^2}}
$$
induces a ring isomorphism ${\mathcal S}(M)\to {\mathcal S}(h^{-1}(M)),\,f\mapsto f\circ h$ that maps ${\mathcal S}^*(M)$ onto ${\mathcal S}^*(h^{-1}(M))$.

\subsection{Bricks of a semialgebraic set}
Recall the following decomposition of $M$ as an irredundant finite union of closed pure dimensional semialgebraic subsets of $M$ as well as some of its main properties \cite[3.2-3]{fe1}. 

\begin{lemdef}\label{bricks}
There exists a (unique) finite family $\{M_1,\ldots,M_r\}$ of semialgebraic subsets of $M$ satisfying the following properties: 
\begin{itemize}
\item[(i)] Each $M_i$ is the closure of the set of points in $M$ whose local dimension is equal to some fixed value. In particular, $M_i$ is pure dimensional and closed in $M$. 
\item[(ii)] $M=\bigcup_{i=1}^rM_i$.
\item[(iii)] $M_i\setminus\bigcup_{j\neq i}M_j$ is dense in $M_i$.
\item[(iv)] $\dim(M_i)>\dim(M_{i+1})$ for $i=1,\ldots,r-1$. In particular, $\dim(M_1)=\dim(M)$.
\end{itemize}

\em We call the sets $M_i$ the \em bricks \em of $M$ and denote the \em family of bricks of $M$ \em with $\bs_M:=\{\bs_i(M):=M_i\}_{i=1}^r$.
\end{lemdef}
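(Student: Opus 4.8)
The plan is to produce the family $\{M_1,\dots,M_r\}$ explicitly from the local dimension function of $M$, to verify (i)--(iv) by elementary properties of that function, and then to extract uniqueness from (i), (ii) and (iv) alone. For $x\in M$ let $\dim_xM:=\min_{\veps>0}\dim(\Bb(x,\veps)\cap M)$ be the local dimension of $M$ at $x$, and for an integer $e$ put $A_{(e)}:=\{x\in M:\dim_xM=e\}$, a semialgebraic set. The local dimension function takes only finitely many values $d_1>d_2>\dots>d_r$, with $d_1=\dim M$, the sets $A_{(d_i)}$ are nonempty and partition $M$, and I set $A_i:=A_{(d_i)}$ and $M_i:=\overline{A_i}$, the closure of $A_i$ in $M$. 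The facts I will use are: (a) for every integer $d$ the set $\{x\in M:\dim_xM\geq d\}$ is closed in $M$, so each $U_i:=\{x\in M:\dim_xM\leq d_i\}=A_i\cup A_{i+1}\cup\dots\cup A_r$ is open in $M$; (b) for a subset $U\subseteq M$ open in $M$ and $x\in U$ one has $\dim_xU=\dim_xM$; and (c) the frontier of a nonempty semialgebraic set $A$ satisfies $\dim(\overline{A}\setminus A)<\dim A$.

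Property (ii) is immediate from $A_i\subseteq M_i\subseteq M$. For (i) and (iv) I work inside the open set $U_i$: by (b), $A_i$ is exactly the set of points of $U_i$ of maximal local dimension $d_i$, hence $A_i$ is closed in $U_i$, the open complement $U_i\setminus A_i$ has dimension $\leq d_i-1$, and $U_i\cap M_i=A_i$. Consequently every $x\in A_i$ has $\dim_xA_i=d_i$, because a small ball about $x$ meets $U_i$ in dimension $d_i$ and meets $U_i\setminus A_i$ only in dimension $\leq d_i-1$; in particular $\dim M_i=\dim A_i=d_i$. If now $x\in M_i$ is arbitrary, pick $y\in A_i$ with $\|y-x\|$ small and a ball $\Bb(y,\delta)\subseteq\Bb(x,\veps)$ with $\Bb(y,\delta)\cap M\subseteq U_i$ and $\dim(\Bb(y,\delta)\cap M)=d_i$; the same dimension count forces $\dim(\Bb(y,\delta)\cap A_i)=d_i$, and since $A_i\subseteq M_i$ this gives $\dim_xM_i=d_i$. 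Thus $M_i$ is pure of dimension $d_i$ and closed in $M$, proving (i), and (iv) follows from $d_1>\dots>d_r$. For (iii), note that $A_i\cap M_j=\emptyset$ for $j<i$ since a point of $M_j$ has local dimension $\geq\dim M_j=d_j>d_i$; therefore $M_i\setminus\bigcup_{j\neq i}M_j\supseteq M_i\setminus\big((M_i\setminus A_i)\cup\bigcup_{j>i}M_j\big)$. By (c) the set $M_i\setminus A_i=\overline{A_i}\setminus A_i$ has dimension $<d_i$, and $\bigcup_{j>i}M_j$ has dimension $\leq d_{i+1}<d_i$, so we are deleting from the pure $d_i$-dimensional set $M_i$ a closed subset of dimension $<d_i$, which has empty interior in $M_i$; hence $M_i\setminus\bigcup_{j\neq i}M_j$ is dense in $M_i$.

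For uniqueness, let $\{N_1,\dots,N_s\}$ be a family satisfying (i), (ii) and (iv), all members nonempty (empty members may be discarded without affecting the properties). By (i), $N_k=\overline{A_{(e_k)}}$ for some integer $e_k$, and since $N_k\neq\emptyset$ the value $e_k$ is attained, i.e.\ $e_k\in\{d_1,\dots,d_r\}$; arguing with (b) exactly as above, $\dim N_k=\dim A_{(e_k)}=e_k$, so $N_k=\overline{A_{(\dim N_k)}}$ is determined by its dimension. Finally every attained value $d_j$ occurs among $e_1,\dots,e_s$: otherwise, by (ii) the semialgebraic set $A_{(d_j)}$ is covered by those $N_k$ that meet it, each of which, being pure of dimension $e_k$ and containing a point of local dimension $d_j$ in $M$, satisfies $e_k\leq d_j$ and hence $e_k\leq d_j-1$; then $\dim A_{(d_j)}\leq d_j-1$, contradicting $\dim A_{(d_j)}=d_j$. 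Therefore $\{e_1,\dots,e_s\}=\{d_1,\dots,d_r\}$, and reordering by (iv) yields $s=r$ and $N_k=\overline{A_{(d_k)}}=M_k$.

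The one genuinely delicate point is the purity assertion in (i): one must show that the entire $d_i$-dimensional part of a small neighborhood of a point of $M_i$ already lies in $\overline{A_i}$. Descending to the open set $U_i$, on which $A_i$ becomes \emph{closed} with complement of dimension $\leq d_i-1$, is precisely what makes this work; the rest is bookkeeping with the local dimension function and the frontier inequality (c).
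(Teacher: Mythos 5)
Your proof is correct. The paper itself gives no proof of this statement --- it merely recalls the decomposition from [Fe1, 3.2--3] --- but since property (i) already forces $M_i$ to be the closure in $M$ of a level set of the local dimension function, your construction (taking $M_i:=\mathrm{cl}_M(\{x\in M:\dim_xM=d_i\})$ and verifying (i)--(iv) via upper semicontinuity of local dimension and the frontier inequality $\dim(\overline{A}\setminus A)<\dim A$) is exactly the intended argument, and your verification of purity and of uniqueness is sound.
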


\begin{cor}\label{closuremi}
Let $X\subset\R^n$ be a semialgebraic set such that $M$ is dense in $X$. Then the families $\bs_M$ and $\bs_X$ of bricks of $M$ and $X$ satisfy the following relations:
\begin{itemize}
\item[(i)] $\bs_X:=\{\bs_i(X)=\cl_X(\bs_i(M))\}_i$,
\item[(ii)] $\bs_M:=\{\bs_i(M)=\bs_i(X)\cap M\}_i$.
\end{itemize}
\end{cor}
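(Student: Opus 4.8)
The plan is to reduce the statement to the definition of bricks in Lemma \ref{bricks} by controlling how local dimension behaves under the closure operation. The key observation is that $M$ being dense in $X$ forces every point of $X$ to be a limit of points of $M$, and for semialgebraic sets the local dimension at a point of the closure is governed by the local dimensions at nearby points of the dense subset. Concretely, I would first record the elementary fact that for any semialgebraic set $M$ dense in a semialgebraic set $X$ and any $x\in X$, one has $\dim_x(X)=\max\{\dim_x(M),\dim_x(\cl_{\R^n}(M)\setminus M)\}$, but since $M$ is dense in $X$ the second term is dominated appropriately; more to the point, the set of points of $X$ of local dimension $d$ is the closure in $X$ of the set of points of $M$ of local dimension $d$. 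This is where I expect to cite the standard semialgebraic dimension theory (Bochnak-Coste-Roy type results), and it is the technical heart of the argument.

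Next I would use this to prove (i). Let $\{M_1,\dots,M_r\}$ be the bricks of $M$, where $M_i=\cl_M(\{x\in M:\dim_x(M)=d_i\})$ for a strictly decreasing sequence $d_1>\dots>d_r$. For each $i$ set $X_i:=\cl_X(M_i)$. Using the density of $M$ in $X$ together with the dimension fact above, I would check that $X_i$ equals the closure in $X$ of $\{x\in X:\dim_x(X)=d_i\}$: indeed $\{x\in M:\dim_x(M)=d_i\}$ is dense in $\{x\in X:\dim_x(X)=d_i\}$ because any $x\in X$ with $\dim_x(X)=d_i$ is approximated by points of $M$ whose local dimension in $M$ (hence, again by density, in $X$) is $d_i$; and conversely points of $M$ of local $M$-dimension $d_i$ have local $X$-dimension $d_i$. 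Since $\dim(X)=\dim(M)$ (a dense semialgebraic subset has the same dimension), the values $d_i$ that arise as local dimensions on $X$ are exactly the same strictly decreasing sequence, so $\{X_1,\dots,X_r\}$ satisfies properties (i)--(iv) of Lemma \ref{bricks}. By the \emph{uniqueness} clause of that lemma, $\bs_X=\{X_i\}_i=\{\cl_X(\bs_i(M))\}_i$, which is (i).

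Finally, (ii) follows from (i) by intersecting with $M$. From $\bs_i(X)=\cl_X(\bs_i(M))$ we get $\bs_i(X)\cap M=\cl_X(\bs_i(M))\cap M=\cl_M(\bs_i(M))=\bs_i(M)$, the middle equality because $\bs_i(M)\subset M$ and taking closure in the subspace $M$ is intersecting the closure in $X$ with $M$, and the last equality because $\bs_i(M)$ is already closed in $M$ by Lemma \ref{bricks}(i). The main obstacle, as noted, is the dimension-theoretic lemma relating local dimensions of $M$ and of its dense superset $X$; everything else is a formal consequence of the uniqueness of the brick decomposition. I would expect the write-up to be short, citing \cite{fe1} for the structural input and standard references for semialgebraic dimension.
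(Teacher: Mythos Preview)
Your approach is correct and is exactly the natural argument the paper has in mind; note that the paper states this corollary without proof, treating it as an immediate consequence of the description of bricks in Proposition and Definition~\ref{bricks}(i) (itself imported from \cite[3.2--3]{fe1}). Your plan---show that for $x\in M$ one has $\dim_x(M)=\dim_x(X)$, then show that $M_{=d}:=\{x\in M:\dim_x(M)=d\}$ is dense in $X_{=d}:=\{x\in X:\dim_x(X)=d\}$, and finally invoke uniqueness of the brick decomposition---is the straightforward unpacking of that definition, and your derivation of (ii) from (i) via $\cl_X(\bs_i(M))\cap M=\cl_M(\bs_i(M))=\bs_i(M)$ is clean.

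Two small comments on the write-up. First, your sentence ``the set of points of $X$ of local dimension $d$ is the closure in $X$ of the set of points of $M$ of local dimension $d$'' literally asserts $X_{=d}=\cl_X(M_{=d})$, which is not what you need and not in general true; what you actually use (and what holds) is $\cl_X(M_{=d})=\cl_X(X_{=d})$, i.e.\ $M_{=d}$ is dense in $X_{=d}$. The argument for this density is the one you sketch: for $x\in X_{=d}$ and any small open $U\ni x$, the open set $U\cap X$ has dimension $d$ and $U\cap M$ is dense in it, hence $\dim(U\cap M)=d$, so some $y\in U\cap M$ has $\dim_y(M)=d$. Second, the claim that ``the values $d_i$ that arise as local dimensions on $X$ are exactly the same'' does not follow merely from $\dim(X)=\dim(M)$; it follows from the density statement just proved (if $X_{=d}\neq\varnothing$ then $M_{=d}\neq\varnothing$) together with $\dim_x(M)=\dim_x(X)$ for $x\in M$. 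With these two clarifications your proof is complete.
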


\subsection{Locally closed semialgebraic sets}
Local closedness has been revealed as an important property for the validity of results, which are in the core of semialgebraic geometry. For instance, it is proved in \cite{fg1} that the classical \L ojasiewicz inequality and the Nullstellensatz for ${\mathcal S}(M)$ work if and only if $M$ is locally closed. It also contains versions of the previous results for ${\mathcal S}^*(M)$. The locally closed subsets of a locally compact topological space coincide with the locally compact ones \cite[\S9.7. Prop.12-13]{bo}. The sets $\cl_{\R^n}(M)$ and $U:=\R^n\setminus(\cl_{\R^n}(M)\setminus M)$ are semialgebraic. If $M$ is locally compact, then $U$ is open in $\R^n$ and $M$ is the intersection of a closed and an open semialgebraic subset of $\R^n$. Let us recall some of the main properties of the largest locally compact and dense subset $M_{\lc}$ of a semialgebraic set $M$. Its construction is the main goal of \cite[9.14-9.21]{dk2}.

\begin{prop}\label{rho} 
Define $\rho_0(M):=\cl_{\R^n}(M)\setminus M$ and 
$$
\rho_1(M):=\rho_0(\rho_0(M))=\cl_{\R^n}(\rho_0(M))\cap M. 
$$
Then the semialgebraic set $M_{\lc}:=M\setminus\rho_1(M)=\cl_{\R^n}(M)\setminus\cl_{\R^n}(\rho_0(M))$ is the largest locally compact and dense subset of $M$ and coincides with the set of points of $M$ that have a compact neighborhood in $M$.
\end{prop}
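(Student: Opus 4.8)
The plan is to prove Proposition~\ref{rho} by establishing the two displayed equalities and the two characterizations one at a time. First I would justify the alternative description of $\rho_1(M)$, namely that $\rho_0(\rho_0(M))=\cl_{\R^n}(\rho_0(M))\cap M$. For any semialgebraic set $X$ and any semialgebraic $Y$ dense in $X$ one has $X\setminus Y=\cl_{\R^n}(X\setminus Y)\cap X$ precisely when $X\setminus Y$ has empty interior in $X$; applying this with $X=\cl_{\R^n}(M)$ and $Y=M$ (which is dense in its closure and whose complement $\rho_0(M)$ is nowhere dense in $\cl_{\R^n}(M)$, since $\cl_{\R^n}(M)$ is the smallest closed set containing $M$) yields $\rho_0(\rho_0(M))=\cl_{\R^n}(\rho_0(M))\cap\cl_{\R^n}(M)$; intersecting with $M$ is harmless because $\cl_{\R^n}(\rho_0(M))\cap M\subseteq\rho_0(M)$'s closure lies inside $\cl_{\R^n}(M)$ anyway. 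This gives $\rho_1(M)=\cl_{\R^n}(\rho_0(M))\cap M$, from which the identity $M\setminus\rho_1(M)=\cl_{\R^n}(M)\setminus\cl_{\R^n}(\rho_0(M))$ is a short set-theoretic manipulation, using $M=\cl_{\R^n}(M)\setminus\rho_0(M)$ and $\rho_0(M)\subseteq\cl_{\R^n}(\rho_0(M))$.

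Next I would show that $M_{\lc}:=M\setminus\rho_1(M)$ is locally compact. By the identity just proved, $M_{\lc}=\cl_{\R^n}(M)\cap(\R^n\setminus\cl_{\R^n}(\rho_0(M)))$, i.e.\ the intersection of a closed semialgebraic set with an open one; such a set is locally closed in $\R^n$, and a locally closed subset of the locally compact space $\R^n$ is locally compact by \cite[\S9.7. Prop.12-13]{bo} as recalled in the text. For density of $M_{\lc}$ in $M$, the point is that $\rho_1(M)$ is nowhere dense in $M$: it equals $\cl_{\R^n}(\rho_0(M))\cap M$, and $\rho_0(M)$ is nowhere dense in $\cl_{\R^n}(M)$, so its closure meets $M$ only in a set with empty interior in $M$. (One can also argue by dimension: $\dim\rho_0(M)<\dim M$ and $\dim\cl_{\R^n}(\rho_0(M))=\dim\rho_0(M)$, so $\rho_1(M)$ is a semialgebraic subset of $M$ of strictly smaller dimension, hence has empty interior and $M_{\lc}$ is dense.)

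Then comes the characterization as the set of points of $M$ with a compact neighborhood in $M$. One inclusion: if $x\in M_{\lc}$ then, since $M_{\lc}$ is locally compact and open in $M$ (its complement $\rho_1(M)$ being closed in $M$), $x$ has a neighborhood in $M_{\lc}$, hence in $M$, whose closure in $M_{\lc}$ is compact; as $M_{\lc}$ is open in $M$ this closure is also closed in $M$, giving a compact neighborhood of $x$ in $M$. Conversely, suppose $x\in M$ has a compact neighborhood $K$ in $M$; I would show $x\notin\rho_1(M)$, equivalently $x\notin\cl_{\R^n}(\rho_0(M))$. If $x$ were in that closure, then every neighborhood of $x$ in $\R^n$ would meet $\rho_0(M)=\cl_{\R^n}(M)\setminus M$; choosing a ball $\Bb(x,\veps)$ with $\ol{\Bb}(x,\veps)\cap M\subseteq K$, we would get a sequence in $M$ converging to a point of $\ol{\Bb}(x,\veps)\cap\rho_0(M)$, hence a sequence in $K$ with limit not in $M$, contradicting compactness of $K$ (compact subsets of $\R^n$ are closed). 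Hence $x\in M_{\lc}$.

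Finally, for maximality: let $L\subseteq M$ be a locally compact semialgebraic set dense in $M$; I must show $L\subseteq M_{\lc}$, i.e.\ every point of $L$ has a compact neighborhood in $M$. Density of $L$ in $M$ gives $\cl_{\R^n}(L)=\cl_{\R^n}(M)$, so $\rho_0(L)=\cl_{\R^n}(L)\setminus L\supseteq\rho_0(M)$, and since $L$ is locally closed, $L\cap\cl_{\R^n}(\rho_0(L))=\varnothing$, in particular $L\cap\cl_{\R^n}(\rho_0(M))=\varnothing$, i.e.\ $L\subseteq\R^n\setminus\cl_{\R^n}(\rho_0(M))$; intersecting with $\cl_{\R^n}(M)$ gives $L\subseteq M_{\lc}$, and by density $M_{\lc}$ is itself a locally compact dense subset, so it is the largest one. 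The main obstacle I anticipate is being careful that ``nowhere dense in $\cl_{\R^n}(M)$'' transfers correctly to ``empty interior in $M$'' and bookkeeping the closures taken in $\R^n$ versus in $M$; the genuinely nontrivial input — that locally closed subsets of a locally compact space are exactly the locally compact ones — is already cited, so the remaining work is organizational rather than deep, and indeed the detailed construction is exactly what \cite[9.14--9.21]{dk2} carries out.
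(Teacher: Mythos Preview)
The paper does not actually prove this proposition: it is stated as a recalled result with the sentence ``Its construction is the main goal of \cite[9.14--9.21]{dk2}'' and no argument is given. Your proposal therefore cannot be compared against a proof in the paper; what you have written is a self-contained verification of the cited result, and the overall strategy (show $M_{\lc}$ is locally closed hence locally compact, show $\rho_1(M)$ is closed with empty interior in $M$, then characterize via compact neighborhoods and prove maximality using local closedness of any competing $L$) is correct and complete.

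Two small points. First, your opening paragraph is muddled: the general claim about $X$ and $Y$ does not say what you need, and applying it with $X=\cl_{\R^n}(M)$, $Y=M$ would yield a statement about $\rho_0(M)$, not $\rho_0(\rho_0(M))$. The identity $\rho_0(\rho_0(M))=\cl_{\R^n}(\rho_0(M))\cap M$ is in fact a one-line consequence of the definition: $\rho_0(\rho_0(M))=\cl_{\R^n}(\rho_0(M))\setminus\rho_0(M)$, and since $\cl_{\R^n}(\rho_0(M))\subset\cl_{\R^n}(M)=M\sqcup\rho_0(M)$, removing $\rho_0(M)$ is the same as intersecting with $M$. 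Replace your first paragraph with this. Second, your parenthetical dimension argument for density is not valid as stated: a semialgebraic subset of $M$ of strictly smaller dimension can have nonempty interior in $M$ when $M$ is not pure dimensional (e.g.\ an isolated point of $M$). Your primary argument via nowhere-density of $\rho_0(M)$ in $\cl_{\R^n}(M)$ is the right one; to make it airtight, note that if $V\cap M\subset\cl_{\R^n}(\rho_0(M))$ for some open $V\subset\R^n$, then $V\cap\cl_{\R^n}(M)=(V\cap M)\cup(V\cap\rho_0(M))\subset\cl_{\R^n}(\rho_0(M))$, contradicting that $\cl_{\R^n}(\rho_0(M))$ has empty interior in $\cl_{\R^n}(M)$.
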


\begin{remarks}\label{curvas}
(i) If $M$ has dimension $\leq 1$, then $M$ is locally compact. This assertion, which follows from Proposition \ref{rho} and the fact that $\dim(\rho_1(M))<\dim(\rho_0(M))<\dim(M)$ (see \cite[2.8.13]{bcr}), is a well-known fact that appears in \cite{br}.

(ii) Let us denote the subset of points $p\in M$ of local dimension $\dim_p(M)\geq 2$ with $M^{\geq2}$; we refer the reader to \cite[2.8.10-11]{bcr} for further details about the local dimension of semialgebraic sets. This set $M^{\geq2}$ is semialgebraic because it is the union of all bricks of $M$ of dimension $\geq 2$. Moreover, $M=M^{\geq2}\cup L$ where $L\subset M$ is the union of bricks of $M$ of dimension $\leq1$, so $L$ is the closure of the set of points of $M$ of local dimension $\leq1$ in $M$. Observe also that $M^{\geq2}\cap L$ is either empty or a finite set.

(iii) More generally, if the set $M^{\geq2}$ of points of local dimension $\geq 2$ is compact, then $M$ is locally compact. Indeed, with the notations in (ii) and if $M^{\geq2}$ is compact, then 
$$
\rho_0(M)=\cl_{\R^n}(M)\setminus M=\cl_{\R^n}(M^{\geq2}\cup L)\setminus(M^{\geq2}\cup L)=\cl_{\R^n}(L)\setminus(M^{\geq2}\cup L)=\rho_0(L)\setminus M^{\geq2}
$$ 
is a semialgebraic set of dimension $0$ (see Proposition \ref{bricks}), hence a finite set, so $\rho_1(M)$ is empty. Thus, $M=M_{\lc}$ is locally compact.
\end{remarks}

\subsection{Zariski spectra of rings of semialgebraic functions.}\label{zr}
We present some results concerning the Zariski spectra of rings of semialgebraic functions and bounded semialgebraic functions on a semialgebraic set \cite[\S3-\S6]{fg3}. The \em Zariski spectrum \em $\Specd(M):=\Spec({\mathcal S}^{\diam}(M))$ of ${\mathcal S}^{\diam}(M)$ is the collection of all prime ideals of ${\mathcal S}^{\diam}(M)$. This set $\Specd(M)$ is usually endowed with the Zariski topology, which has the family of sets $\Dd_{\Specd(M)}(f):=\{\gtp\in\Specd(M):\, f\not\in\gtp\}$ as a basis of open sets and where $f\in{\mathcal S}^{\diam}(M)$. We denote $\Zz_{\Specd(M)}(f):=\Specd(M)\setminus\Dd_{\Specd(M)}(f)$. 

If $p\in M$, we denote the maximal ideal of all functions in ${\mathcal S}^{\diam}(M)$ vanishing at $p$ with $\gtmd_p$. Observe that the map $\phi:M\to\Specd(M),\ p\mapsto\gtmd_p$ embeds $M$ endowed with the Euclidean topology into $\Specd(M)$ as a dense subspace. 

\subsubsection{}\label{closedspec}
Given a semialgebraic map $\varphi:N\to M$, there exists a unique continuous map $\Specd(\varphi):\Specd(N)\to\Specd(M)$, which extends $\varphi$. In fact, if $N\subset M$ and $N$ is closed in $M$, then $\Specd(N)\cong\cl_{\Specd(M)}(N)$ via $\Specd({\tt j})$ where ${\tt j}:N\hookrightarrow M$ is the inclusion map \cite[4.6]{fg3}.

\subsection{Semialgebraic depth.}\label{ht}
Let us recall the concept of semialgebraic depth of a prime ideal introduced and developed in \cite[\S2]{fg2} and \cite[\S4]{fe1}. This invariant is useful to estimate the coheight of a prime ideal in another prime ideal. 

Let $\gtp\subset\gtq$ be two prime ideals of ${\mathcal S}^{\diam}(M)$. The \em coheight of $\gtp$ in $\gtq$ \em is the maximum of the integers $r\geq0$ such that there exists a chain of prime ideals $\gtp:=\gtp_0\subsetneq\cdots\subsetneq\gtp_r=:\gtq$. We define the \em coheight of a prime ideal $\gtp$ of ${\mathcal S}^{\diam}(M)$ \em as the coheight of $\gtp$ in the unique maximal ideal of ${\mathcal S}^{\diam}(M)$ containing $\gtp$. In particular, the height of a maximal ideal $\gtm$ of ${\mathcal S}^{\diam}(M)$ is the maximum of the coheights of the minimal prime ideals of ${\mathcal S}^{\diam}(M)$ contained in $\gtm$. 

An ideal $\gta$ of ${\mathcal S}(M)$ is a $z$-ideal if, whenever $f,g\in {\mathcal S}(M)$ satisfy $Z_M(f)\subset Z_M(g)$ and $f\in\gta$, then $g\in\gta$. Moreover, if $M$ is locally compact, then all prime ideals of ${\mathcal S}(M)$ are $z$-ideals as a straightforward consequence of \cite[2.6.6]{bcr}.

The \em semialgebraic depth \em of a prime ideal $\gtp$ of ${\mathcal S}(M)$ is $\dgt_M(\gtp):=\min\{\dim Z_M(f):\, f\in\gtp\}$. In \cite[4.14(i)]{fe1} it is proved that if $\gtp\subset\gtq$ are two prime $z$-ideals of ${\mathcal S}(M)$, then the coheight of $\gtp$ in $\gtq$ is $\leq\dgt_M(\gtp)-\dgt_M(\gtq)$. 

\begin{defpro}\label{seteta}
\em Let $\eta(M)$ be the set of points of $M$ that have an open neighborhood in $M$ that is semialgebraically homeomorphic to the interval $[0,1)$. \em Then 
\begin{itemize}
\item[(i)] The set $\eta(M)$ is finite.
\item[(ii)] For each point $p\in \eta(M)$, the maximal ideal $\gtm_p^*$ of ${\mathcal S}^*(M)$ corresponding to $p$ contains properly just one prime ideal of ${\mathcal S}^*(M)$.
\item[(iii)] ${\mathcal S}^*(M)\cong{\mathcal S}^*(M\setminus\eta(M))$.
\end{itemize}
\end{defpro}
\begin{proof}
(i) First observe that if none of the bricks of $M$ has dimension one, then $\eta(M)=\varnothing$ and there is nothing to prove. Otherwise let $\bs_i(M)$ be the only $1$-dimensional brick of $M$. By \cite[2.9.10]{bcr} the set $\bs_i(M)$ is the disjoint union of a finite number of Nash submanifolds $N_j$ and each of them is either Nash diffeomorphic to the open interval $(0,1)$ or to a point. Clearly, $\eta(M)$ is contained in the union of those $N_j$'s, which are points. Hence, $\eta(M)$ is a finite set. 

(ii) Let $B$ be a compact neighborhood of $p$ in $\R^n$ such that $Z:=M\cap B$ is semialgebraically homeomorphic to $[0,1]$. Let $T:=\cl_M(M\setminus Z)$ and note that $p\not\in T$ and $M=T\cup Z$. Then $\Speca(M)=\cl_{\Speca(M)}(T)\cup\cl_{\Speca(M)}(Z)$. As $T$ is closed in $M$, we have $p\equiv\gtm_p^*\in\Speca(M)\setminus\cl_{\Speca(M)}(T)=\cl_{\Speca(M)}(Z)\setminus\cl_{\Speca(M)}(T)$. 

Next, $\cl_{\Speca(M)}(Z)$ is by \ref{closedspec} homeomorphic to $\Speca(Z)\cong\Speca([0,1])$. Under this homeomorphism we may assume that $p$ corresponds to the maximal ideal $\gtm_1$ in $\Speca([0,1])$. Since $I:=[0,1]$ is locally compact, we know by \ref{ht} that if $\gtp$ is a prime ideal of ${\mathcal S}(I)$ (properly) contained in $\gtm_1$, then $0=\dgt_I(\gtm_1)<\dgt_I(\gtp)\leq1$. Thus, $\dgt_I(\gtp)=1$ and $\gtp$ is by \cite[4.5]{fe1} a minimal prime ideal contained in $\gtm_1$. Since $1$ is an endpoint of the interval $[0,1]$, we deduce that $\gtp$ is the unique prime ideal properly contained in $\gtm_1$. Therefore $\gtm_p^*$ contains just one prime ideal of ${\mathcal S}(M)$.

(iii) By \cite[2.9]{fg3} the homomorphism $\phi:{\mathcal S}^*(M)\to{\mathcal S}^*(M\setminus\eta(M)),\,f\mapsto f|_{M\setminus\eta(M)}$ is surjective. But it is also injective because $M\setminus\eta(M)$ is dense in $M$.
\end{proof}

\subsection{Maximal spectra of rings of semialgebraic functions}\label{spectracomp}
We focus our attention on a relevant subspace of $\Specd(M)$: its \em maximal spectrum\em. We denote the collection of all maximal ideals of ${\mathcal S}^{\diam}(M)$ with $\betad M$ and consider in $\betad M$ the topology induced by the Zariski topology of $\Specd(M)$. Given $f,f_1,\ldots,f_r\in{\mathcal S}^{\diam}(M)$, we denote in the following
$$
\begin{array}{rcl}
{\mathcal D}_{\betad M}(f)\hspace{-2.2mm}&:=\hspace{-2.2mm}&\Dd_{\Specd(M)}(f)\cap\betad M,\\[4pt]
{\mathcal Z}_{\betad M}(f)\hspace{-2.2mm}&:=\hspace{-2.2mm}&\betad M\setminus{\mathcal D}_{\betad M}(f)=\Zz_{\Specd(M)}(f)\cap\betad M.
\end{array}
$$
By \cite[7.1.25(ii)]{bcr} $\betad M$ is a Hausdorff compactification of $M$. Moreover, as it happens for rings of continuous functions \cite[\S7]{gj}, the respective maximal spectra $\betas M$ and $\betaa M$ of ${\mathcal S}(M)$ and ${\mathcal S}^*(M)$ are homeomorphic \cite[3.5]{fg5}. More precisely,

\subsubsection{}\label{homeo}
The map $\Phi:\betas M\to\betaa M$ that associates with each maximal ideal $\gtm$ of ${\mathcal S}(M)$ the unique maximal ideal $\gtm^*$ of ${\mathcal S}^*(M)$ that contains the prime ideal $\gtm\cap{\mathcal S}^*(M)$ is a homeomorphism. In particular, $\Phi(\gtm_p)=\gtm_p^*$ for all $p\in M$.

Thus, we denote the maximal ideals of ${\mathcal S}^*(M)$ with $\gtm^*$ where $\gtm$ is the unique maximal ideal of ${\mathcal S}(M)$ such that $\gtm\cap{\mathcal S}^*(M)\subset\gtm^*$.

\subsubsection{}\label{cocr} 
The inclusion map $\R\hookrightarrow{\mathcal S}^*(M)/\gtm^*,\, r\mapsto r+\gtm^*$ is an isomorphism of ordered fields because ${\mathcal S}^*(M)/\gtm^*$ is an Archimedean extension of $\R$. Since $\R$ admits a unique automorphism, there is no ambiguity to refer to $f+\gtm^*$ as a real number for every $f\in{\mathcal S}^*(M)$. In particular, the isomorphism ${\mathcal S}^*(M)/\gtm_p^*\cong\R$ identifies $f+\gtm_p^*$ with $f(p)$ for all $p\in M$. Therefore each bounded semialgebraic function $f:M\to\R$ defines a (unique) natural extension $\widehat{f}:\betaa M\to\R,\, \gtm^*\to f+\gtm^*$, which is continuous because given real numbers $a<b$, we have $\widehat{f}^{-1}((a, b))={\mathcal D}_{\betaa M}(f-a+|f-a|,b-f+|b-f|)$.

\subsubsection{}\label{closedbeta1}
By \cite[5.9]{fg3} we know that if $\varphi:N\to M$ is a semialgebraic map between semialgebraic sets $N$ and $M$, then $\Speca(\varphi):\Speca(N)\to\Speca(M)$ maps $\betaa N$ into $\betaa M$; we denote the restriction of $\Speca(\varphi)$ to $\betaa N$ with $\betaa \varphi:\betaa N\to\betaa M$. Moreover, in \cite[6.3\&6.5]{fg3} we provide proofs of the following properties. Let $C,C_1,C_2$ be closed semialgebraic subsets of the semialgebraic set $M$ and ${\tt j}:C\hookrightarrow M$ the inclusion map. Then 
\begin{itemize}
\item[(i)] The space $\betaa\, C$ is homeomorphic to $\cl_{\betaa M}(C)\subset\betaa M$ via $\betaa\,{\tt j}:\betaa C\to\betaa M$. 
\item[(ii)] $\cl_{\betaa M}(C_1\cap C_2)=\cl_{\betaa M}(C_1)\cap\cl_{\betaa M}(C_2)$.
\end{itemize}

\subsubsection{}\label{freefixed}
In contrast to ideals of polynomial rings, the zero set of a prime ideal $\gtp$ of ${\mathcal S}^{\diam}(M)$ provides no substantial information about $\gtp$ because it is either a singleton or the empty set. An ideal $\gta$ of ${\mathcal S}^{\diam}(M)$ is said to be \em fixed \em if all functions in $\gta$ vanish simultaneously at some point of $M$. Otherwise the ideal $\gta$ is \em free\em. The fixed maximal ideals of the ring ${\mathcal S}^{\diam}(M)$ are those of the form $\gtm_p^{\diam}$ where $p\in M$. Of course different points $p,q\in M$ define different maximal ideals $\gtm_p^{\diam}$ and $\gtm_q^{\diam}$ in ${\mathcal S}^{\diam}(M)$. Clearly $\gtm_p\cap{\mathcal S}^*(M)=\gtm_p^*$ for each point $p\in M$. In fact, the equality $\gtm\cap{\mathcal S}^*(M)=\gtm^*$ characterizes the fixed maxi\-mal ideals of ${\mathcal S}^{\diam}(M)$ (see \cite[3.7]{fg5}). Namely,
$$
\text{$\gtm^*$ is a fixed ideal $\iff$ $\gtm$ is a fixed ideal $\iff$ $\gtm\cap{\mathcal S}^*(M)=\gtm^*$ $\iff$ $\hgt(\gtm)=\hgt(\gtm^*)$}.
$$

As a straightforward consequence of the previous fact, we get an algebraic characterization of the compactness of a semialgebraic set. 

\begin{lem}\label{compact0}
The following assertions are equivalent:
\begin{itemize}
\item[(i)] $M$ is compact.
\item[(ii)] Each maximal ideal of ${\mathcal S}(M)$ is fixed.
\item[(iii)] Each maximal ideal of ${\mathcal S}^*(M)$ is fixed.
\item[(iv)] ${\mathcal S}(M)={\mathcal S}^*(M)$.
\end{itemize}
\end{lem}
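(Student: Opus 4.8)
The plan is to prove the equivalences by establishing the cycle (i)$\Rightarrow$(ii)$\Rightarrow$(iii)$\Rightarrow$(iv)$\Rightarrow$(i), leaning heavily on the characterization of fixed maximal ideals recalled in \ref{freefixed}.

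First I would prove (i)$\Rightarrow$(ii). Suppose $M$ is compact and let $\gtm$ be a maximal ideal of ${\mathcal S}(M)$; I want to produce a common zero of all functions in $\gtm$. The standard argument is a finite-intersection-property argument: the family $\{Z_M(f):f\in\gtm\}$ consists of closed subsets of the compact space $M$, and it has the finite intersection property since $Z_M(f_1)\cap\cdots\cap Z_M(f_k)=Z_M(f_1^2+\cdots+f_k^2)$ is nonempty (otherwise $f_1^2+\cdots+f_k^2$ would be a unit of ${\mathcal S}(M)$ lying in $\gtm$). By compactness $\bigcap_{f\in\gtm}Z_M(f)\neq\varnothing$; picking $p$ in this intersection gives $\gtm\subset\gtm_p$, hence $\gtm=\gtm_p$ by maximality, so $\gtm$ is fixed.

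Next, (ii)$\Rightarrow$(iii): let $\gtm^*$ be a maximal ideal of ${\mathcal S}^*(M)$. By \ref{homeo} there is a (unique) maximal ideal $\gtm$ of ${\mathcal S}(M)$ with $\gtm\cap{\mathcal S}^*(M)\subset\gtm^*$, and by hypothesis $\gtm=\gtm_p$ for some $p\in M$; then $\gtm^*\supset\gtm_p\cap{\mathcal S}^*(M)=\gtm_p^*$, and since $\gtm_p^*$ is maximal we get $\gtm^*=\gtm_p^*$, a fixed ideal. For (iii)$\Rightarrow$(iv): if ${\mathcal S}(M)\neq{\mathcal S}^*(M)$ there is an unbounded $f\in{\mathcal S}(M)$; then $g:=1/(1+f^2)\in{\mathcal S}^*(M)$ and its zero set $Z_M(g)=\varnothing$, so $g$ is not in any fixed maximal ideal $\gtm_p^*$. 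The ideal generated by $g$ together with all $h\in{\mathcal S}^*(M)$ whose zero set is nonempty and disjoint from the `large' values of $f$ need not be proper, so instead I would argue directly: $g$ is not a unit of ${\mathcal S}^*(M)$ (as $\inf_M g=0$ because $f$ is unbounded), hence $g$ lies in some maximal ideal $\gtm^*$ of ${\mathcal S}^*(M)$; but $g(p)\neq 0$ for every $p\in M$, so $g\notin\gtm_p^*$ for all $p$, and therefore $\gtm^*$ is free, contradicting (iii). Finally (iv)$\Rightarrow$(i): by \ref{freefixed}, if $M$ is not compact then, since $\betaa M$ is a compactification of $M$ strictly larger than $M$, there is a free maximal ideal $\gtm^*$ of ${\mathcal S}^*(M)$; the corresponding $\gtm$ of ${\mathcal S}(M)$ then satisfies $\gtm\cap{\mathcal S}^*(M)=\gtm^*$ only in the fixed case, so for this free $\gtm^*$ we get $\hgt(\gtm)>\hgt(\gtm^*)$ and in particular $\gtm\neq\gtm\cap{\mathcal S}^*(M)$ cannot hold with equality — more simply, a free ideal of ${\mathcal S}^*(M)$ cannot exist when ${\mathcal S}(M)={\mathcal S}^*(M)$ and every maximal ideal of ${\mathcal S}(M)$ extending it would be fixed by the compactness direction already shown; the cleanest route is the contrapositive via the noncompactness producing an unbounded semialgebraic function (e.g. the distance-type function blowing up near a boundary point not in $M$), which directly contradicts (iv).

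The main obstacle I anticipate is the implication (iv)$\Rightarrow$(i): one must exhibit, from non-compactness of $M$, an honest \emph{unbounded} semialgebraic function on $M$. If $M$ is not compact it is either unbounded in $\R^m$ — in which case $x\mapsto\|x\|$ works — or bounded but not closed, in which case there is a point $q\in\cl_{\R^m}(M)\setminus M$ and $x\mapsto 1/\|x-q\|$ is a continuous semialgebraic function on $M$ that is unbounded; one should check semialgebraicity of the graph, which is routine. Once this function is in hand, (iv) fails, completing the cycle. (Alternatively, (iv)$\Rightarrow$(i) can be obtained for free from (i)$\Leftrightarrow$(ii)$\Leftrightarrow$(iii) together with the observation that ${\mathcal S}(M)={\mathcal S}^*(M)$ trivially forces every maximal ideal of ${\mathcal S}(M)$ to be a maximal ideal of ${\mathcal S}^*(M)$, and then invoking \ref{freefixed}; I would present whichever version is shortest.)
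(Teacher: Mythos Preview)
Your argument is correct, but it follows a somewhat different and more hands-on route than the paper. The paper dispatches (ii)$\Leftrightarrow$(iii) by citing \ref{freefixed}, then proves (i)$\Leftrightarrow$(ii) in one stroke via the embedding $M\hookrightarrow\betas M$: since $M$ is dense in the compact Hausdorff space $\betas M$, compactness of $M$ is equivalent to $\phi(M)=\betas M$, i.e.\ to every maximal ideal being fixed. For (i)$\Leftrightarrow$(iv), the paper simply notes that ${\mathcal S}(M)={\mathcal S}^*(M)$ forces $\gtm\cap{\mathcal S}^*(M)=\gtm=\gtm^*$ for every maximal $\gtm$, so by \ref{freefixed} all maximal ideals are fixed, and then appeals to the already established (i)$\Leftrightarrow$(ii). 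Your (i)$\Rightarrow$(ii) via the finite intersection property and your (iii)$\Rightarrow$(iv) via the non-unit $g=1/(1+f^2)$ are valid alternatives and have the virtue of not presupposing the compactness of $\betas M$; the paper's route is shorter but leans on that background fact.

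One genuine wrinkle: your first pass at (iv)$\Rightarrow$(i) is circular. You invoke ``the compactness direction already shown'' while trying to \emph{deduce} compactness, and your parenthetical alternative appeals to (i)$\Leftrightarrow$(ii)$\Leftrightarrow$(iii) when your cycle has only established the forward implications (i)$\Rightarrow$(ii)$\Rightarrow$(iii). The contrapositive you eventually settle on---non-compact $M$ is either unbounded (take $\|x\|$) or bounded and non-closed (take $1/\|x-q\|$ for $q\in\cl_{\R^m}(M)\setminus M$), in either case yielding an unbounded semialgebraic function---is correct and self-contained; that should simply replace the muddled first attempt.
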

\begin{proof}
The equivalence of (ii) and (iii) has already been commented. Let us check the equivalence of (i) and (ii). Consider the embedding $\phi:M\mapsto\betas M,\ p \mapsto\gtm_p$ and recall that $M$ is dense in $\betas M$. Thus, if $M$ is compact, $\phi(M)=\betas M$ and so all maximal ideals of ${\mathcal S}(M)$ are fixed. Conversely, if the maximal ideals of ${\mathcal S}(M)$ are fixed, then $\phi(M)=\betas M$ is compact and $M$ is compact, too.

Finally we show that (i) and (iv) are equivalent. The identity ${\mathcal S}(M)={\mathcal S}^*(M)$ is obvious if $M$ is compact. Conversely, if ${\mathcal S}(M)={\mathcal S}^*(M)$, then $\gtm\cap{\mathcal S}^*(M)=\gtm^*$ for each maximal ideal $\gtm$ of ${\mathcal S}(M)$. Thus, all maximal ideals of ${\mathcal S}(M)$ are fixed and $M$ is compact by the equivalence of (i) and (ii).
\end{proof}

\section{Homeomorphisms between Zariski spectra}\label{s3}

In this section we prove Theorems \ref{unbound1}, \ref{bound1} and \ref{boundunbound1} stated in the Introduction. We begin with a preliminary crucial result, which has an interest on its own.

\begin{thm}\label{unbound0}
Let $\gamma:\Specs(N)\to\Specs(M)$ be a homeomorphism. Then $\gamma|_N:N\to M$ is a homeomorphism.
\end{thm}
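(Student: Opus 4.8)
The key is to recover the set $M$ topologically inside $\Specs(M)$ and to do so in a way that is preserved by arbitrary homeomorphisms. I would characterize the points of $M$ among the prime ideals of $\mathcal{S}(M)$ as precisely the maximal ideals $\gtm$ of $\mathcal{S}(M)$ that are \emph{fixed}, and then show that fixedness can be detected by a purely topological property of the point $\gtm\in\Specs(M)$. Once this is done, $\gamma$ restricts to a bijection $N\to M$ (after identifying $N$ with its image under $\phi$ and likewise for $M$), and since $\phi$ embeds $N$ (resp.\ $M$) with its Euclidean topology as a \emph{dense} subspace of $\Specs(N)$ (resp.\ $\Specs(M)$), the subspace topology that $\gamma|_N$ transports is exactly the Euclidean one on each side; hence $\gamma|_N$ is a homeomorphism.

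\emph{Step 1: isolate the maximal spectrum.} A point $\gtp\in\Specs(M)$ is a maximal ideal if and only if it is a \emph{closed} point of $\Specs(M)$ — every prime ideal is contained in a unique maximal one, so the closure of $\{\gtp\}$ consists of $\gtp$ together with all primes containing it, and this closure is $\{\gtp\}$ exactly when $\gtp$ is maximal. Being a closed point is obviously a topological notion, so $\gamma$ maps $\betas N$ onto $\betas M$ homeomorphically.

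\emph{Step 2: detect fixed maximal ideals topologically.} By \ref{freefixed} and Lemma \ref{compact0}, the fixed maximal ideals are those with $\hgt(\gtm)=\hgt(\gtm^*)$; equivalently, by the semialgebraic depth estimates recalled in \ref{ht}, a fixed maximal ideal $\gtm_p$ sits at the top of chains of prime $z$-ideals whose length is controlled by $\dgt_M$, which for a point of $M$ (in the locally closed situation $M_{\lc}$, and in general after passing to bricks) encodes the local dimension of $M$ at $p$. What I actually want is a topological invariant separating $\gtm_p$ (for $p\in M$) from the free maximal ideals in $\betas M\setminus M$. The natural candidate, following the $G_\delta$-point strategy of \cite[9.6-7]{gj} alluded to in the Introduction, is: \emph{$\gtm$ has a countable basis of neighborhoods in $\Specs M$}. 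Points of $M$ have such a basis (one checks that the sets $\mathcal{D}_{\Specs M}(\,\tfrac1n - f\,)$-type neighborhoods, or rather the traces of small Euclidean balls, work, using that $\phi$ is an embedding with the Euclidean topology); and in $\Specs M$ — as opposed to $\betaa M$ — I expect no free maximal ideal to have a countable neighborhood basis, because the generization order below a free maximal ideal of $\mathcal{S}(M)$ is too rich (this is exactly where working with $\mathcal{S}(M)$ rather than $\mathcal{S}^*(M)$ is essential, and why the analogous statement for $\betaa M$ fails and forces the weaker Theorem \ref{betabound}). So I would: (a) prove points of $M$ are first countable in $\Specs M$; (b) prove free maximal ideals of $\mathcal{S}(M)$ are \emph{not} first countable in $\Specs M$; conclude $\gamma(N)=M$ as sets.

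\emph{Step 3: upgrade to a homeomorphism.} Having $\gamma|_N:N\to M$ a bijection, note that $\gamma|_N$ is the restriction of the homeomorphism $\gamma$ to the subspace $\phi(N)\subset\Specs N$, landing in the subspace $\phi(M)\subset\Specs M$; a restriction of a homeomorphism to a subspace, with image a subspace, is a homeomorphism onto its image. Since $\phi$ carries the Euclidean topology of $N$ (resp.\ $M$) onto the subspace topology, $\gamma|_N:N\to M$ is a homeomorphism.

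\emph{Main obstacle.} The crux is Step 2(b): showing that every free maximal ideal of $\mathcal{S}(M)$ fails to have a countable basis of neighborhoods in $\Specs M$. One expects to argue by contradiction from such a countable basis $\{U_n\}$: intersecting with basic opens $\mathcal{D}_{\Specs M}(f_n)$ and using that the $f_n$ can be taken with $Z_M(f_n)$ of large dimension (since prime $z$-ideals have positive semialgebraic depth away from fixed ideals, by \ref{ht} and \cite[4.5, 4.14]{fe1}), one should construct a single semialgebraic function vanishing on a "diagonal" set that escapes every $U_n$ yet still lies in a prime below $\gtm$, contradicting the basis property — this is essentially a semialgebraic Baire/diagonalization argument and is the technically delicate point. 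An alternative route, perhaps cleaner, is to bypass first countability entirely: characterize $\gtm_p$ among closed points of $\Specs M$ as those $\gtm$ for which the intersection of all neighborhoods, or the set of prime ideals generizing to $\gtm$, has a prescribed topological shape reflecting $\dim_p M$ — but this still reduces to the same depth analysis. Either way, the semialgebraic-depth machinery of \ref{ht} together with the brick decomposition \ref{bricks} is what makes the free-versus-fixed dichotomy visible to the topology.
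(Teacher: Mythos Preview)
Your Step 2(b) is false, and this breaks the entire strategy. Take $M=(0,1)$. The free maximal ideal $\gtm$ of $\mathcal{S}(M)$ associated with the end at $0$ (that is, $\gtm_\alpha$ for $\alpha(\t)=\t$) has height $0$ by \ref{freefixed}, so it is simultaneously minimal and maximal. For each $n\geq 1$ let $f_n\in\mathcal{S}(M)$ be nonzero exactly on $(0,1/n)$. Then $f_n\notin\gtm$, and since $(0,1)$ is locally compact every prime ideal of $\mathcal{S}(M)$ is a $z$-ideal (see \ref{ht}); a direct check shows that for any $g\notin\gtm$ one has $Z_M(g)\subset[\veps,1)$ for some $\veps>0$, hence $Z_M(g)\subset Z_M(f_n)$ for $1/n<\veps$, which gives $\Dd_{\Specs(M)}(f_n)\subset\Dd_{\Specs(M)}(g)$. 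Thus $\{\Dd_{\Specs(M)}(f_n)\}_n$ is a countable basis of neighborhoods of the free maximal ideal $\gtm$ in $\Specs(M)$. First countability therefore cannot separate fixed from free maximal ideals in $\Specs(M)$, exactly as it fails in $\betaa M$ (Proposition \ref{gpn3}); your hope that the full spectrum behaves differently from the maximal spectrum in this respect is unfounded.

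The paper's argument avoids this trap altogether. It does not look for a single topological criterion isolating $M$ inside $\Specs(M)$; instead it exploits that a homeomorphism of spectra preserves the specialization order and hence the \emph{height} of every prime. Combined with the brick decomposition (Proposition \ref{bricks}) and the dimension formulae of \cite{fg2}, one shows by induction on $\dim(N)$ that $\gamma$ matches the closures $\cl_{\Specs(N)}(\bs_i(N))$ with $\cl_{\Specs(M)}(\bs_i(M))$ brick by brick. Inside a pure $d_i$-dimensional brick the fixed maximal ideals are exactly those of height $d_i$, while every free maximal ideal has height strictly less than $d_i$ by \ref{freefixed}; this is the height dichotomy that replaces your failed first-countability dichotomy, and it forces $\gamma(\bs_i(N))=\bs_i(M)$, hence $\gamma(N)=M$.
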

\begin{proof}
Of course, the problem is diminished to prove $\gamma (N)=M$. To that end it is sufficient to check that the families of bricks $\bs_N:=\{\bs_i(N)\}_{i=1}^r$ and $\bs_M:=\{\bs_i(M)\}_{i=1}^s$
of $N$ and $M$ satisfy: 

\vspace{1mm}
\begin{substeps}{unbound0}\label{bab} 
\em $r=s$ and $\gamma(\bs_i(N))=\bs_i(M)$ for $i=1,\ldots,r$.
\end{substeps}

\vspace{1mm}
We begin by proving

\vspace{1mm}
\begin{substeps}{unbound0}\label{babs} 
\em $r=s$, $\dim(\bs_i(N))=\dim(\bs_i(M))$ and for each index $i=1,\dots,r$
$$
\gamma(\cl_{\Specs(N)}(\bs_i(N)))=\cl_{\Specs(M)}(\bs_i(M)).
$$
\end{substeps}\setcounter{substep}{0}
We proceed by induction on the dimension of $N$. Indeed, if $N$ has dimension $0$, then 
$$
\begin{array}{l}
\Specs(N)=N=\bs_1(N)=\cl_{\Specs(N)}(\bs_1(N))\\[4pt]
\Specs(M)=M=\bs_1(M)=\cl_{\Specs(M)}(\bs_1(M))
\end{array}
$$
and there is nothing to prove. Suppose that the result is true if $N$ has dimension $\leq d-1$ and let us see that it also holds for $\dim(N)=d$.

If $\gtp_1\subsetneq\gtp_2$ in ${\mathcal S}(N)$, then $\gamma(\gtp_1)\subsetneq\gamma(\gtp_2)$ in ${\mathcal S}(M)$ as $\gamma$ is a homeomorphism; hence, $\hgt(\gtp)=\hgt(\gamma(\gtp))$ for all $\gtp\in\Specs(N)$. Moreover, if $\gtn$ is a maximal ideal of ${\mathcal S}(N)$, then $\gamma(\gtn)$ is a maximal ideal of ${\mathcal S}(M)$ of its same height. Thus, by \cite[Thm.1, p.2]{fg2}
\begin{multline}\label{bullet1}
\dim(\bs_1(N))=\dim(N)=\dim({\mathcal S}(N))=\max\{\hgt(\gtn):\,\gtn\in\betas N\}\\
=\max\{\hgt(\gtm):\,\gtm\in\betas M\}=\dim({\mathcal S}(M))=\dim(M)=\dim(\bs_1(M)).
\end{multline}

By \ref{closedspec} the spaces $\cl_{\Specs(M)}(\bs_i(M))$ and $\Specs(\bs_i(M))$ are homeomorphic for each index $i$. By \cite[Thm.1, p.2]{fg2} $\dim({\mathcal S}(\bs_i(M)))=\dim(\bs_i(M))$ and by \cite[Thm.2, p.2]{fg2} the height of the maximal ideal $\gtn_p=\{f\in{\mathcal S}(N):\,f(p)=0\}$ equals $\dim(\bs_1(N))=\dim(N)$ for each $p\in\bs_1(N)$. The same happens for all $q\in\bs_1(M)$. On the other hand, the semialgebraic sets $T:=\bigcup_{i=2}^r\bs_i(N)$ and $S:=\bigcup_{j=2}^s\bs_j(M)$, which are respectively closed in $N$ and $M$, have dimension $<d$. Thus, by \cite[Thm.1, p.2]{fg2} all prime ideals in $\Specs(T)$ or $\Specs(S)$ have height $<d$. By \ref{closedspec} we have
$$
\begin{array}{l}
\Specs(T)\cong\cl_{\Specs(N)}(T)=\bigcup_{i=2}^r\cl_{\Specs(N)}(\bs_i(N))\\[4pt] 
\Specs(S)\cong\cl_{\Specs(M)}(S)=\bigcup_{j=2}^s\cl_{\Specs(M)}(\bs_j(M))
\end{array}
$$
and so all prime ideals in each of these sets have also height $<d$. Since $\gamma$ preserves heights, we have $\gamma(\bs_1(N))\subset\cl_{\Specs(M)}(\bs_1(M))$. Thus, as $\gamma$ is a continuous map, $\gamma(\cl_{\Specs(N)}(\bs_1(N)))\subset\cl_{\Specs(M)}(\bs_1(M))$. Since the inverse map of $\gamma$ is also continuous, we conclude by symmetry 
\begin{equation}\label{ast1}
\gamma(\cl_{\Specs(N)}(\bs_1(N)))=\cl_{\Specs(M)}(\bs_1(M)).
\end{equation}
Let use see now
$$
\gamma\Big(\bigcup_{i=2}^r\cl_{\Specs(N)}(\bs_i(N))\Big)=\bigcup_{j=2}^s\cl_{\Specs(M)}(\bs_j(M)).
$$
As $\bs_1(N)$ is closed in $N$, notice that
$$
\bs_i(N)\setminus\cl_{\Specs(N)}(\bs_1(N))
=\bs_i(N)\setminus(\cl_{\Specs(N)}(\bs_1(N))\cap N)=\bs_i(N)\setminus \bs_1(N)
$$
for $i\geq 2$. Since $\bs_i(N)\setminus \bs_1(N)$ is dense in $\bs_i(N)$, we get
\begin{equation}\label{ast2}
\begin{split}
\bigcup_{i=2}^r\cl_{\Specs(N)}(\bs_i(N))&=\cl_{\Specs(N)}\Big(\bigcup_{i=2}^r\bs_i(N)\Big)=\cl_{\Specs(N)}\Big(\bigcup_{i=2}^r\bs_i(N)\setminus \bs_1(N)\Big)\\
&=\cl_{\Specs(N)}\Big(\bigcup_{i=2}^r\bs_i(N)\setminus \cl_{\Specs(N)}(\bs_1(N))\Big).
\end{split}
\end{equation}
By \eqref{ast1} and as $\gamma$ is bijective, we have
\begin{multline*}
\gamma\Big(\bigcup_{i=2}^r\bs_i(N)\setminus\cl_{\Specs(N)}(\bs_1(N))\Big)\subset\Specs(M)\setminus\cl_{\Specs(M)}(\bs_1(M))\\
=\bigcup_{j=2}^s\cl_{\Specs(M)}(\bs_j(M))\setminus\cl_{\Specs(M)}(\bs_1(M))\subset\bigcup_{j=2}^s\cl_{\Specs(M)}(\bs_j(M)).
\end{multline*}
Taking closures, using \eqref{ast2} and the fact that $\gamma$ is a homeomorphism, we conclude
$$
\gamma(\bigcup_{i=2}^r\cl_{\Specs(N)}(\bs_i(N)))\subset\bigcup_{j=2}^s\cl_{\Specs(M)}(\bs_j(M)).
$$ 
Of course, by symmetry we deduce the converse inclusion and so
$$
\gamma\Big(\bigcup_{i=2}^r\cl_{\Specs(N)}(\bs_i(N))\Big)=\bigcup_{j=2}^s\cl_{\Specs(M)}(\bs_j(M)).
$$

Applying the inductive hypothesis to the semialgebraic sets $N':=\bigcup_{i=2}^r\bs_i(N)$ and $M':=\bigcup_{j=2}^s\bs_j(M)$ of dimension $\leq d-1$, we de\-duce that $r=s$, $\dim(\bs_i(N))=\dim(\bs_i(M))$ and 
$\gamma(\cl_{\Specs(N)}(\bs_i(N)))=\cl_{\Specs(M)}(\bs_i(M)))$ for $i=2, \ldots, r$. This in combination with \eqref{bullet1} and \eqref{ast1} proves claim \ref{unbound0}.\ref{babs}.

Now we are ready to prove \ref{unbound0}.\ref{bab}. By \ref{closedspec} we have 
$$
\cl_{\Specs(N)}(\bs_i(N))=\Specs(\bs_i(N))\quad\text{and}\quad\cl_{\Specs(M)}(\bs_i(M))=\Specs(\bs_i(M)). 
$$
Let $d_i:=\dim(\bs_i(N))=\dim(\bs_i(M))$ and denote 
$$
\gamma_i:=\gamma|_{\Specs(\bs_i(N))}:\Specs(\bs_i(N))\to \Specs(\bs_i(M)).
$$
Given a point $\gtn_p\equiv p\in \bs_i(N)$, we claim $\hgt(\gamma_i(\gtn_p))=\hgt(\gtn_p)=d_i$ (see \cite[Thm.2, p.2]{fg2}) and so $\gamma_i(\gtn_p)\in\bs_i(M)$. Otherwise $\gamma_i(\gtn_p)$ would be a free maximal ideal of ${\mathcal S}(\bs_i(M))$ by \ref{freefixed} whose height satisfies
$$
\hgt(\gamma_i(\gtn_p))<\hgt(\gamma_i(\gtn_p^*))\leq\dim({\mathcal S}^*(\bs_i(M)))=\dim(\bs_i(M))=d_i=\hgt(\gtn_p)
$$ 
by \cite[Thms. 1, 2, p.2]{fg2}, which is a contradiction. This proves $\gamma_i(\bs_i(N))\subset \bs_i(M)$ and by symmetry we obtain $\gamma(\bs_i(N))=\bs_i(M)$.
\end{proof}

\begin{remarks}\label{unbound0r}
(i) Under the hypothesis of Theorem \ref{unbound0}, the restriction $\gamma|_{\betas N}:\betas N\to \betas M$ is also a homeomorphism because $\gamma$ and $\gamma^{-1}$ map closed points onto closed points. 

(ii) In the proof of Theorem \ref{unbound0} we have shown that the families $\bs_N:=\{\bs_i(N)\}_{i=1}^r$ and $\bs_M:=\{\bs_i(M)\}_{i=1}^r$ have the same cardinality and $\gamma(\bs_i(N))=\bs_i(M)$ for $i=1,\ldots,r$. Moreover, as $\gamma$ is a homeomorphism, $\gamma(\cl_{\betas N}(\bs_i(N)))=\cl_{\betas M}(\bs_i(M))$ for $i=1,\ldots,r$.\fina
\end{remarks}

\begin{lem}\label{hsa}
Let $\phi:{\mathcal S}^{\diam}(M)\to{\mathcal S}^{\diam}(N)$ be an isomorphism and $N_1\subset N$ a semialgebraic set. Suppose that $M_1:=\Spec(\phi)(N_1)$ is a semialgebraic subset of $M$. Then the map
$\Spec(\phi)|_{N_1}:N_1\to M_1$ is a semialgebraic homeomorphism.
\end{lem}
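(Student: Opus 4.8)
The plan is to identify $\Spec(\phi)|_{N_1}$ with the restriction of an explicit semialgebraic map, after which everything falls out. First I would dispose of the purely topological content. Since $\phi$ is a ring isomorphism, $\Spec(\phi)\colon\Specd(N)\to\Specd(M)$, $\gtq\mapsto\phi^{-1}(\gtq)$, is a homeomorphism; under the embeddings $N\hookrightarrow\Specd(N)$ and $M\hookrightarrow\Specd(M)$ of \ref{zr} (homeomorphisms onto their images for the Euclidean topologies) it restricts to a homeomorphism $N_1\to\Spec(\phi)(N_1)=M_1$. So the whole point is to show that this restriction is a \emph{semialgebraic} map.

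Next I would reduce, exactly as in the preliminaries, to the case where $M$ and $N$ are bounded; this is harmless because the homeomorphism $h$ of \ref{zr} induces compatible isomorphisms on ${\mathcal S}(\cdot)$ and on ${\mathcal S}^*(\cdot)$ and carries semialgebraic homeomorphisms to semialgebraic homeomorphisms, so an affirmative answer in the bounded case yields the general one. Assuming $M\subset\R^m$ bounded, the coordinate functions $x_1,\dots,x_m$ of $\R^m$ restrict to elements of ${\mathcal S}^{\diam}(M)$ for either choice of $\diam$; set $g_i:=\phi(x_i|_M)\in{\mathcal S}^{\diam}(N)$ and let $G:=(g_1,\dots,g_m)\colon N\to\R^m$, which is a semialgebraic map because each $g_i$ is a semialgebraic function (so its graph, hence $\{(p,y):y_i=g_i(p)\}$, is semialgebraic). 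Here I would also record the standard fact that $\phi$ is automatically an $\R$-algebra isomorphism: the assignment $r\mapsto\phi(r)$ on constants is order preserving (as $\phi$ sends squares to squares), it fixes $\Q$, and squeezing between rationals forces $\phi(r)=r$ for every $r\in\R$.

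The heart of the argument is then the identity $\Spec(\phi)|_{N_1}=G|_{N_1}$. Given $p\in N_1$, by hypothesis $\Spec(\phi)(\gtmd_p)=\phi^{-1}(\gtmd_p)$ lies in $M$, hence equals $\gtmd_q$ for a unique $q=(q_1,\dots,q_m)\in M_1$; since $x_i|_M-q_i\in\gtmd_q=\phi^{-1}(\gtmd_p)$, applying $\phi$ and using $\R$-linearity gives $g_i-q_i\in\gtmd_p$, i.e. $g_i(p)=q_i$, so $G(p)=q=\Spec(\phi)(p)$. Consequently $\Spec(\phi)|_{N_1}\colon N_1\to M_1$ is the restriction of the semialgebraic map $G$ and is therefore semialgebraic; being also a homeomorphism by the first paragraph, it is a semialgebraic homeomorphism. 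I do not expect a genuine obstacle here: the only points requiring care are the availability of bounded coordinate functions (hence the reduction to the bounded case, which matters precisely when $\diam={*}$) and the automatic $\R$-linearity of $\phi$ — which is exactly the place where the distinction between a mere homeomorphism and a semialgebraic one enters.
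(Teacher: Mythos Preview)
Your argument is correct and follows essentially the same route as the paper: reduce to the bounded case so the coordinate projections lie in ${\mathcal S}^{\diam}(M)$, push them through $\phi$ to obtain a semialgebraic map $G$, and verify $G(p)=\Spec(\phi)(p)$ by comparing maximal ideals. You are slightly more explicit than the paper in spelling out the homeomorphism part and the automatic $\R$-linearity of $\phi$ (which the paper uses tacitly when writing $\phi^{-1}(f_i-f_i(p))=\pi_i-f_i(p)$), but the core idea is identical.
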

\begin{proof}
We assume that $M$ is bounded, so each linear projection $\pi_i:M\to\R,\ x\mapsto x_i$ belongs to ${\mathcal S}^{\diam}(M)$. Choose $f_i:=\phi(\pi_i)\in{\mathcal S}^{\diam}(N)$ and consider the semialgebraic map $\varphi:=(f_1|_{N_1},\ldots,f_m|_{N_1}):N_1\to\R^m$. It is enough to check the equality $\varphi=\Spec(\phi)|_{N_1}$, that is,
$\Spec(\phi)(\gtnd_p)=\phi^{-1}(\gtnd_p)=\gtmd_{\varphi(p)}$ for each point $p\in N_1$. Indeed, $f_i-f_i(p)\in\gtnd_p$ and so
$$
\pi_i-f_i(p)=\phi^{-1}(f_i)-f_i(p)=\phi^{-1}(f_i-f_i(p))\in\phi^{-1}(\gtnd_p)=\Spec(\phi)(\gtnd_p)=\gtmd_q
$$
for some point $q\in M_1\subset M$. Now, since $\pi_i-f_i(p)\in\gtmd_q$, it follows that $q_i-f_i(p)=0$ for $i=1,\ldots,m$, that is, $\varphi(p)=q$.
\end{proof}

Now the proof of Theorem \ref{unbound1} follows easily.

\begin{proof}[Proof of Theorem \em\ref{unbound1}]
The right to the left implication is clear. Conversely, let $\phi:{\mathcal S}(M)\to{\mathcal S}(N)$ be an isomorphism. This isomorphism induces a homeomorphism
$$
\Spec(\phi):\Specs(N)\to\Specs(M),\ \gtp\mapsto\phi^{-1}(\gtp).
$$
By Theorem \ref{unbound0} the restriction of $\Spec(\phi)$ to $N$ provides a homeomorphism between $N$ and $M$ that is semialgebraic by Lemma \ref{hsa}.
\end{proof}

We turn to bounded semialgebraic functions in order to prove Theorem \ref{bound1} and need a preliminary result.
\begin{thm}\label{bound0}
Let $\gamma:\Speca(N)\to\Speca(M)$ be a homeomorphism. Then the restriction map  $\gamma|_{N\setminus\eta(N)}:N\setminus\eta(N)\to M\setminus\eta(M)$ is a homeomorphism.
\end{thm}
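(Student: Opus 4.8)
The plan is to imitate, step by step, the proof of Theorem \ref{unbound0} with ${\mathcal S}^*(\cdot)$ in place of ${\mathcal S}(\cdot)$, and to concentrate the new work on the one point where boundedness genuinely bites.

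I would first reduce to the case $\eta(N)=\eta(M)=\varnothing$. By \ref{seteta}(iii) the restriction isomorphisms ${\mathcal S}^*(N)\cong{\mathcal S}^*(N\setminus\eta(N))$ and ${\mathcal S}^*(M)\cong{\mathcal S}^*(M\setminus\eta(M))$ yield homeomorphisms $\Speca(N)\cong\Speca(N\setminus\eta(N))$ and $\Speca(M)\cong\Speca(M\setminus\eta(M))$ carrying the fixed maximal ideal at a point $p\notin\eta$ to the fixed maximal ideal at $p$. Moreover $\eta(X\setminus\eta(X))=\varnothing$ for every semialgebraic set $X$: since $\eta(X)$ is finite, hence closed, an open neighborhood in $X\setminus\eta(X)$ semialgebraically homeomorphic to $[0,1)$ can be shrunk to an open neighborhood of the same kind in $X$. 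Replacing $N$ and $M$ by $N\setminus\eta(N)$ and $M\setminus\eta(M)$, it therefore suffices to prove, under the hypothesis $\eta(N)=\eta(M)=\varnothing$, that $\gamma(N)=M$, where $N$ and $M$ are identified with the sets of fixed maximal ideals $\{\gtm_p^*\}$ inside $\Speca(N)$ and $\Speca(M)$.

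Next I would reproduce the brick induction of Theorem \ref{unbound0}. The maps $\gamma$ and $\gamma^{-1}$ preserve heights of prime ideals and carry closed points to closed points; one has $\dim{\mathcal S}^*(X)=\dim X$ and $\hgt(\gtm_p^*)=\dim_pX$ for $p\in X$ by \cite[Thms.~1, 2, p.~2]{fg2}; and $\Speca(C)\cong\cl_{\Speca(X)}(C)$ for $C$ closed in $X$ by \ref{closedspec}. Splitting off the top-dimensional brick and inducting on $\dim N$ exactly as there (using that a semialgebraic set of dimension $<d$ has all primes of ${\mathcal S}^*$ of height $<d$), I would obtain that the brick families satisfy $r=s$, $d_i:=\dim\bs_i(N)=\dim\bs_i(M)$, and
$$\gamma(\cl_{\Speca(N)}(\bs_i(N)))=\cl_{\Speca(M)}(\bs_i(M))\qquad(i=1,\dots,r),$$
so that $\gamma$ restricts to homeomorphisms $\gamma_i:\Speca(\bs_i(N))\to\Speca(\bs_i(M))$ between the spectra of the bricks, each a pure $d_i$-dimensional closed subset of $N$ (resp.\ $M$).

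The main obstacle is the final step: showing each $\gamma_i$ carries fixed maximal ideals to fixed maximal ideals, i.e.\ $\gamma_i(\bs_i(N))=\bs_i(M)$. The height argument of \ref{unbound0} is no longer available, since a free maximal ideal of ${\mathcal S}^*(\bs_i(M))$ may have height $d_i$ (as the endpoint ideals of ${\mathcal S}^*((0,1))\cong{\mathcal S}([0,1])$ already show), so heights do not separate fixed from free. Instead I would exploit the topologically intrinsic local structure: for a closed point $\gtm^*$ of $\Speca(\bs_i(M))$ the subspace of its generizations $\{\gtp:\gtp\subseteq\gtm^*\}=\Spec(({\mathcal S}^*(\bs_i(M)))_{\gtm^*})$, and in particular the number of its minimal points, is carried homeomorphically by $\gamma_i$ to the corresponding subspace of $\gamma_i(\gtm^*)$. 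For $d_i=0$ the bricks are finite and there is nothing to prove. For $d_i=1$ the brick is pure one-dimensional and locally compact (Remark \ref{curvas}(i)), so $\betaa(\bs_i(M))$ is a compact one-dimensional semialgebraic set, i.e.\ a finite graph (cf.\ \cite[5.17]{fg5}); since $\eta(\bs_i(M))=\varnothing$ forces every point of $\bs_i(M)$ to have at least two local branches while each point of the remainder sits on a single arc-end, the fixed maximal ideals are exactly the non-leaf points of this graph, a topological property preserved by $\gamma_i|_{\betaa(\bs_i(N))}$. The genuinely delicate case is $d_i\geq2$, and this is where the core of the argument lies: I expect to argue by induction on the dimension, using Proposition \ref{rho} to peel off the largest locally compact dense subset $(\bs_i(M))_{\lc}$ — on which the local rings of ${\mathcal S}^*$ coincide with rings of semialgebraic germs, so that the conclusion of Theorem \ref{unbound0} can be transported — and the inductive hypothesis on the lower-dimensional closed set $\rho_1(\bs_i(M))$ after passing to the associated closed subspace of the spectrum, while the free maximal ideals, attached to formal or semialgebraic paths, have essentially different generization spaces and are ruled out. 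Granting $\gamma_i(\bs_i(N))=\bs_i(M)$ for all $i$ yields $\gamma(N)=M$; undoing the reduction gives that $\gamma$ restricts to a homeomorphism $N\setminus\eta(N)\to M\setminus\eta(M)$.
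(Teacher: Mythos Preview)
Your reduction to $\eta(N)=\eta(M)=\varnothing$ and the brick induction establishing $\gamma(\cl_{\Speca(N)}(\bs_i(N)))=\cl_{\Speca(M)}(\bs_i(M))$ are fine and parallel the paper. The genuine gap is the final step, and here your brick-by-brick strategy runs into trouble that is not merely a matter of missing details.

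For $d_i=1$ you assert $\eta(\bs_i(M))=\varnothing$, but this does not follow from $\eta(M)=\varnothing$: a point of the one-dimensional brick that also lies in a higher-dimensional brick can perfectly well be a leaf of $\bs_i(M)$ while not belonging to $\eta(M)$ (take a disk with a segment attached at a boundary point). Inside $\betaa(\bs_i(M))$ such a fixed point is indistinguishable from a free endpoint by your ``non-leaf'' criterion. More generally, once you restrict $\gamma$ to a single brick you have thrown away exactly the extra generizations coming from the other bricks that witness the point is fixed. For $d_i\ge 2$ your inductive sketch via $M_{\lc}$ and $\rho_1$ is not an argument: you give no reason why $\gamma_i$ should respect this decomposition, nor how ``transporting Theorem~\ref{unbound0}'' would work when $\gamma_i$ is a homeomorphism of $\Speca$, not $\Specs$. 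The remark that free maximal ideals ``have essentially different generization spaces'' is the right intuition, but you have not turned it into a usable invariant.

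The paper avoids all of this by working globally rather than brick-by-brick. It proves directly that $N\setminus\eta(N)$ is exactly the set of closed points of $\Speca(N)$ admitting at least two \emph{immediate} predecessors (coheight-$1$ primes), a purely order-theoretic property of $\Speca(N)$ preserved by any homeomorphism. One inclusion comes from the Curve Selection Lemma: a point $p\in N\setminus\eta(N)$ is reached by two semialgebraic paths with disjoint images, giving two distinct coheight-$1$ primes $\gtp_{\alpha_i}\cap{\mathcal S}^*(N)\subsetneq\gtn_p^*$. For the converse, points of $\eta(N)$ are handled by \ref{seteta}(ii), while for a \emph{free} maximal ideal $\gtn^*$ one uses \cite[5.2(i)]{fe1}: the primes of ${\mathcal S}^*(N)$ containing $\gtn\cap{\mathcal S}^*(N)$ form a chain, so $\gtn^*$ has a unique immediate predecessor. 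Your ``number of minimal points in the generization space'' is the wrong invariant here (a smooth interior point has a single minimal prime below it), but the nearby idea of counting immediate predecessors is exactly what works.
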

\begin{proof}
All is reduced to prove $\gamma(N\setminus\eta(N))=M\setminus\eta(M)$. By symmetry it is enough to check $\gamma(N\setminus\eta(N))\subset M\setminus\eta(M)$. We claim:

\vspace{1mm}
\begin{substeps}{bound0}\label{predecessors} 
\em $N\setminus\eta(N)$ is the set of points of $\betaa N$ with at least two predecessors in $\Speca(N)$.
\end{substeps}\setcounter{substep}{0}

\vspace{1mm}
Assume this is true for a while. Then the analogous statement works for $M$ instead of $N$ and given a point $p\in N\setminus\eta(N)$, the ideal $\gtn_p^*\in\betaa N$ has two predecessors in $\Speca(N)$. If $\gtp_1\subsetneq\gtp_2$ in ${\mathcal S}^*(N)$, then $\gamma(\gtp_1)\subsetneq\gamma(\gtp_2)$ in ${\mathcal S}^*(M)$ as $\gamma$ is a homeomorphism. Thus, also $\gamma(\gtn_p^*)$ has at least two predecessors in $\Speca(M)$. Therefore $\gamma(\gtn_p^*)\in M\setminus\eta(M)$, as required. 

Now we proceed with \ref{bound0}.\ref{predecessors}. Fix a point $p\in N\setminus\eta(N)$. By the Curve Selection Lemma \cite[2.5.5]{bcr} there exist two semialgebraic paths $\alpha_1,\alpha_2:[0,1]\to\R^n$ such that $\alpha_i(0)=p$, $\alpha_i((0,1])\subset N$ and $\alpha_1((0,1])\cap\alpha_2((0,1])=\varnothing$.

Note that ${\mathcal S}(N)={\mathcal S}^*(N)_{{\mathcal W}}$ where ${\mathcal W}$ is the multiplicative set of those functions $f\in{\mathcal S}^*(N)$ such that $Z_N(f)=\varnothing$ because each $f\in{\mathcal S}(N)$ can be written as $f=(f/(1+|f|))/(1/(1+|f|))$. Since $\gtn_p\cap{\mathcal S}^*(N)=\gtn_p^*$, there exists a one-to-one correspondence, which preserves inclusions between the prime ideals of ${\mathcal S}(N)$ contained in $\gtn_p$ and those of ${\mathcal S}^*(N)$ contained in $\gtn_p^*$. Consider the prime $z$-ideals
$$
\gtp_{\alpha_i}:=\{f\in{\mathcal S}(N):\,\exists\,\veps>0\,|\ (f\circ\alpha_i)|_{(0,\,\veps)}=0\};
$$
see Proposition \ref{pmtalpha} and \ref{misp} below for a careful study. A straightforward computation shows that $\dgt_N(\gtp_{\alpha_i})=1$ while $\dgt_N(\gtn_p)=0$. Thus, $\gtp_{\alpha_i}$ has coheight $1$ in $\gtn_p$ by \ref{ht} and so the prime ideal $\gtp_{\alpha_i}\cap{\mathcal S}^*(N)$ has coheight $1$ in $\gtn_p^*$. Hence, this last one is a maximal ideal of ${\mathcal S}^*(N)$ with at least two predecessors. 

Conversely, let $\gtn^*\in\Speca(N)$ be a prime ideal with two predecessors, which implies by Proposition \ref{seteta}(ii) that $\gtn^*\not\in\eta(N)$ and so everything is reduced to check  $\gtn^*\in N$. Suppose by contradiction that $\gtn^*\in\Speca(N)\setminus N$. Then by \ref{freefixed} the only maximal ideal $\gtn$ of ${\mathcal S}(N)$ with $\gtn\cap{\mathcal S}^*(N)\subset\gtn^*$ satisfies $\gtn\cap{\mathcal S}^*(N)\subsetneq\gtn^*$. By \cite[5.2(i)]{fe1} the subchain of prime ideals of ${\mathcal S}^*(N)$ containing $\gtn\cap{\mathcal S}^*(N)$ is the same for any non refinable chain of prime ideals in ${\mathcal S}^*(N)$ ending at $\gtn^*$. In particular, since $\gtn\cap{\mathcal S}^*(N)\subsetneq\gtn^*$, the ideal $\gtn^*$ only contains one prime ideal of coheight $1$, which is a contradiction.
\end{proof}

\begin{remarks}\label{bound0r}
(i) Under the hypothesis of Theorem \ref{bound0}, the restriction map $\gamma|_{\betaa N}:\betaa N\to \betaa M$ is a homeomorphism because $\gamma$ maps closed points onto closed points. 

(ii) The homeomorphism $\gamma|:N\setminus\eta(N)\to M\setminus\eta(M)$ preserves local dimensions by \cite[Thm.2, p.2]{fg2} because it preserves the height of fixed maximal ideals. Moreover, $N\setminus\eta(N)$ and $M\setminus\eta(M)$ are dense subsets of $N$ and $M$, respectively. Let $\bs_N:=\{\bs_i(N)\}_{i=1}^r$ and $\bs_M:=\{\bs_j(M)\}_{j=1}^s$ be the families of bricks of $N$ and $M$. Using the same strategy as in \ref{unbound0}.\ref{babs}, one shows that $r=s$ and
\begin{equation}\label{ast3}
\gamma(\cl_{\Speca(N)}(\bs_i(N)))=\cl_{\Speca(M)}(\bs_i(M)) \quad \text {for}\quad i=1,\ldots,r.
\end{equation}
Since $\gamma$ maps closed points onto closed points, this implies
$$
\gamma(\cl_{\betaa N}(\bs_i(N)))=\cl_{\betaa M}(\bs_i(M))\quad \text {for}\quad i=1,\ldots,r.
$$
Moreover, by \eqref{ast3} and Theorem \ref{bound0}, $\gamma(\bs_i(N)\setminus\eta(N))=\bs_i(M)\setminus\eta(M)$ for $i=1,\ldots,r$.
\fina
\end{remarks}

Now the proof of Theorem \ref{bound1} follows straightforwardly.

\begin{proof}[Proof of Theorem \em\ref{bound1}]
The right to the left implication follows from Proposition \ref{seteta}(iii). Conversely, let $\phi:{\mathcal S}^*(M)\to{\mathcal S}^*(N)$ be an isomorphism, which induces a homeomorphism
$$
\Speca(\phi):\Speca(N)\to\Speca(M),\ \gtp\mapsto\phi^{-1}(\gtp).
$$
By Theorem \ref{bound0} $\Speca(\phi)|_{N\setminus \eta(N)}:N\setminus\eta(N)\to M\setminus\eta(M)$ is a homeomorphism, which is semialgebraic by Lemma \ref{hsa}.
\end{proof}

\begin{example}
By Theorem \ref{bound1} the rings ${\mathcal S}^*([0,1])$, ${\mathcal S}^*((0,1])$ and ${\mathcal S}^*((0,1))$ are isomorphic.
\end{example}

Next, we study $N$ and $M$ if $\Specs(N)$ and $\Speca(M)$ are homeomorphic.
\begin{thm}\label{boundunbound0}
Assume that $\Specs(N)$ and $\Speca(M)$ are homeomorphic. Then $N$ is compact and $N\setminus\eta(N)$ and $M\setminus\eta(M)$ are homeomorphic. In particular, if $N=M$, then $\Specs(N)$ and $\Speca(N)$ are homeomorphic if and only if $N$ is compact, that is, if and only if the rings ${\mathcal S}(N)$ and ${\mathcal S}^*(N)$ coincide.
\end{thm}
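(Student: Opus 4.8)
The plan is to reduce the whole statement to the single assertion that $N$ is compact. Granting this, Lemma~\ref{compact0} gives ${\mathcal S}(N)={\mathcal S}^*(N)$, hence $\Specs(N)=\Speca(N)$, so the hypothesis becomes a homeomorphism $\Speca(N)\to\Speca(M)$ and Theorem~\ref{bound0} yields that $N\setminus\eta(N)$ and $M\setminus\eta(M)$ are homeomorphic. The ``in particular'' clause is then immediate: for $N=M$ a homeomorphism $\Specs(N)\cong\Speca(N)$ forces $N$ compact by the first part, the converse is clear since then ${\mathcal S}(N)={\mathcal S}^*(N)$ and so $\Specs(N)=\Speca(N)$ (use also \ref{homeo}), and the last reformulation is precisely Lemma~\ref{compact0}(iv).

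So let $\gamma:\Specs(N)\to\Speca(M)$ be a homeomorphism. Since $\gamma$ and $\gamma^{-1}$ preserve strict inclusions of prime ideals, $\gamma$ preserves heights and carries closed points onto closed points, so it restricts to a homeomorphism $\gamma|:\betas N\to\betaa M$. Running the inductive brick argument from the proof of Theorem~\ref{unbound0} (claim \ref{unbound0}.\ref{babs}) verbatim --- with \cite[Thms.~1,~2, p.2]{fg2} and \ref{closedspec} --- one gets that $\bs_N$ and $\bs_M$ have the same length $r$, that $\dim\bs_i(N)=\dim\bs_i(M)=:d_i$, that $\gamma(\cl_{\Specs N}(\bs_i(N)))=\cl_{\Speca M}(\bs_i(M))$, and hence, again by \ref{closedspec}, homeomorphisms $\Specs(\bs_i(N))\to\Speca(\bs_i(M))$ between the spectra of the pure dimensional pieces. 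As each $\bs_i(N)$ is closed in $N$ and $N=\bigcup_{i=1}^r\bs_i(N)$, the set $N$ is compact if and only if every $\bs_i(N)$ is; thus we may replace $(N,M)$ by a suitable $(\bs_{i_0}(N),\bs_{i_0}(M))$ and assume that $N$ and $M$ are pure dimensional of the same dimension $d$, that $N$ is \emph{not} compact, and seek a contradiction.

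The heart of the matter is to detect, purely in terms of the specialization poset of $\Specs(N)$, a configuration at a closed point that $\Speca(M)$ cannot realize --- and this is where the boundedness of ${\mathcal S}^*(M)$ becomes decisive. Since $N$ is non-compact, choose a free maximal ideal $\gtn$ of ${\mathcal S}(N)$, i.e. a point of $\betas N\setminus N$. Using the curve selection lemma \cite[2.5.5]{bcr}, the semialgebraic depth (\ref{ht}), \cite[4.5,~5.2]{fe1}, and the analysis of the remainder by formal paths carried out in Section~\ref{s4} (Proposition~\ref{pmtalpha} and \ref{misp}), one describes the prime ideals of ${\mathcal S}(N)$ contained in $\gtn$ and so the local structure of $\Specs(N)$ at $\gtn$. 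In the extreme case $d=1$ this is literally the statement that $\gtn$ is a minimal prime of ${\mathcal S}(N)$ --- the germs at an ``end'' of a curve form a field --- so $\gtn$ is a closed point of $\Specs(N)$ with no predecessors; by the description of $\Speca(M)$ obtained in the proof of Theorem~\ref{bound0} (claim \ref{bound0}.\ref{predecessors} together with the height formula \cite[Thms.~1,~2, p.2]{fg2}) every closed point of $\Speca(M)$ has a predecessor, so no such point exists on the right-hand side and $\gamma$ cannot exist. In higher dimension the germ of $N$ at $\gtn$ is richer, but the mechanism is the same: one exhibits a prime below $\gtn$, resp. below a point of $\partial M$, of prescribed semialgebraic type --- on the $\Speca(M)$ side every point of $\betaa M$ has such a predecessor because ${\mathcal S}^*(M)$ contains enough bounded functions vanishing at the remainder, on the ${\mathcal S}(N)$ side the unbounded functions make the corresponding germ at $\gtn$ behave differently --- and transports the discrepancy across $\gamma$, which preserves the specialization order, to reach a contradiction.

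I expect the genuinely hard step to be exactly this last comparison of the two remainders in arbitrary dimension: proving that the germ of ${\mathcal S}(N)$ at a free maximal ideal always carries the distinguishing feature, and that this feature is always absent from ${\mathcal S}^*(M)$. That is precisely what the careful study in Section~\ref{s4} of the points of $\partial M$ associated with formal paths is meant to supply, and it is the reason that material must precede the present theorem.
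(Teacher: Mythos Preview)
Your reduction to the single claim ``$N$ is compact'' and the subsequent invocation of Lemma~\ref{compact0} and Theorem~\ref{bound0} is exactly what the paper does. The divergence is in how you try to prove compactness, and here there is a genuine gap.

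The paper's argument is short and uniform in the dimension: if $N$ is non-compact (and, after the usual reduction, bounded), one invokes \cite[7.1]{fe1} to produce a \emph{free} maximal ideal $\gtn$ of ${\mathcal S}(N)$ with $\hgt(\gtn)=0$ lying outside the finite set of isolated points of $N$; since $N$ is dense in $\Specs(N)$, this $\gtn$ is a non-isolated closed point of height~$0$. The homeomorphism $\gamma$ would then land on a non-isolated closed point $\gtm$ of $\Speca(M)$ of height~$0$. But $\gtm$ must lie in $\cl_{\Speca(M)}(\bs_i(M))\cong\Speca(\bs_i(M))$ for some brick of dimension $\geq 1$ (the $0$-dimensional brick consists only of isolated points), and \cite[7.2]{fe1} says that in $\Speca$ of a semialgebraic set without isolated points every prime has height $\geq 1$. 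Contradiction. No brick reduction on the $N$ side, no Section~\ref{s4}, no case distinction on $d$.

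Your argument, by contrast, reduces to pure-dimensional $N,M$ via the brick decomposition and then only treats $d=1$ correctly. The assertion that ``the mechanism is the same'' in higher dimension, followed by a vague reference to primes of ``prescribed semialgebraic type'' and the expectation that Section~\ref{s4} supplies the missing step, is not a proof. In fact Section~\ref{s4} is placed \emph{after} this theorem precisely because it is not needed here; it serves Theorem~\ref{betabound}, not Theorem~\ref{boundunbound0}. What you are missing is the single external input \cite[7.1]{fe1}: for any non-compact bounded $N$ there exists a free maximal ideal of ${\mathcal S}(N)$ of height~$0$. Your $d=1$ case is a special instance of this (there, \emph{every} free maximal ideal of ${\mathcal S}(N)$ has height~$0$ by \ref{freefixed} and $\dim{\mathcal S}^*(N)=1$), but in higher dimension you cannot expect an arbitrary free $\gtn$ to work, and you give no construction of one that does.
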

\begin{proof}
Suppose by contradiction that $N$ is not compact and assume that $N$ is bounded. Let $T$ be the set of isolated points of $N$, which is a finite set, and let $g\in{\mathcal S}^*(\R^n)$ be such that $Z_{\R^n}(g)=T$. By \cite[7.1]{fe1} there exists a free maximal ideal $\gtn$ of ${\mathcal S}(N)$ such that $\hgt(\gtn)=0$ and $g\not\in\gtn$. Since $N$ is dense in $\Specs(N)$, the isolated points of $\Specs(N)$ are those of $N$. Thus, $\gtn$ is a non-isolated point of $\Specs(N)$ because $g\not\in\gtn$. Let $\gamma:\Specs(N)\to\Speca(M)$ be a homeomorphism. Then $\gtm:=\gamma(\gtn)$ is a free maximal ideal of $\Speca(M)$, $\hgt(\gtm)=0$ and $\gtm$ is a non-isolated point of $\Speca(M)$. 

Let $\bs_M:=\{\bs_i(M)\}_{i=1}^r$ be the family of bricks of $M$ and define 
$$
J:=\begin{cases}
\{1,\ldots,r\}&\text{if $\dim(\bs_r(M))\geq1$,}\\ 
\{1,\ldots,r-1\}&\text{if $\dim(\bs_{r}(M))=0$.} 
\end{cases}
$$
Notice that $\gtm\in\bigcup_{i\in J}\cl_{\Speca(M)}(\bs_i(M))\cong\Speca(\bigcup_{i\in J}\bs_i(M))$ because 
$$
\Speca(M)=\bigcup_{i=1}^r\cl_{\Speca(M)}(\bs_i(M)). 
$$
Since the bricks $\bs_i(M)$ with $i\in J$ do not have isolated points, it follows from \cite[7.2]{fe1} that $\hgt(\gtm)\geq 1$, which is a contradiction. Thus, $N$ must be compact and therefore ${\mathcal S}(N)={\mathcal S}^*(N)$. Now $N\setminus\eta(N)$ and $M\setminus\eta(M)$ are by Theorem \ref{bound0} homeomorphic.

Finally, if $M=N$, then $N$ is compact and so ${\mathcal S}(N)={\mathcal S}^*(N)$. The converse is trivial.
\end{proof}

We are ready to prove Theorem \ref{boundunbound1}.

\begin{proof}[Proof of Theorem \em\ref{boundunbound1}]
Assume first that the rings ${\mathcal S}(N)$ and ${\mathcal S}^*(M)$ are isomorphic. The compactness of $N$ follows from Theorem \ref{boundunbound0}; hence, ${\mathcal S}(N)={\mathcal S}^*(N)$. Now, by Theorem \ref{bound1} the sets $N\setminus\eta(N)$ and $M\setminus\eta(M)$ are semialgebraically homeomorphic.

Conversely, by Theorem \ref{bound1} the rings ${\mathcal S}^*(N)$ and ${\mathcal S}^*(M)$ are isomorphic. Since $N$ is compact, ${\mathcal S}^*(N)={\mathcal S}(N)$ and we are done.
\end{proof}

\section{Points of the remainder associated with formal paths}\label{s4}

In this section we analyze some particular points of the remainder $\partial M:=\beta_s^*M\setminus M$ associated with formal paths. Surprisingly these points admit a countable basis of neighborhoods. In particular, these ones corresponding to semialgebraic paths play a crucial role.  For simplicity we assume in this section that $M$ is bounded.

\subsection{Extension of coefficients}\label{eoc} 
Let $F$ be a real closed field containing $\R$. There exists a (unique) semialgebraic subset $M_{F}\subset F^m$ called \em extension of $M$ to $F$ \em that satisfies $M=M_{F}\cap R^m$. The extension of semialgebraic sets depicts the natural expected behavior with respect to boolean operations, interiors, closures, boundedness, semialgebraically connected components, Transfer Principle, etc. \cite[\S5.1-3]{bcr}. Moreover, given another semialgebraic set $N\subset\R^n$ and a semialgebraic map $f:M\to N$, there exists a unique semialgebraic map $f_{F}:M_{F}\to N_{F}$ called \em extension of $f$ to $F$ \em that fulfills $f_{F}|_M=f$. The extension of semialgebraic maps enjoys the natural expected behavior with respect to direct and inverse image, continuity, injectivity, surjectivity, bijectivity, etc. \cite[\S5.1-3]{bcr}. Summarizing: `Every property that can be expressed in the first-order language of ordered fields with parameters in $\R$ can be transferred to $F$' (\cite[5.2.3]{bcr}). We refer the reader to \cite{dk0} and \cite[\S5]{bcr} for a complete study of the extension (of coefficients) to $F$. By \cite[7.3.1]{bcr} the extension of semialgebraic functions to $F$ induces a well-defined $\R$-monomorphism 
$$
{\tt i}_{M,F}:{\mathcal S}(M)\hookrightarrow{\mathcal S}(M_{F}),\ f\mapsto f_{F}. 
$$
Composing it with the evaluation homomorphism 
$$
{\rm ev}_{M_F,\tt p}:{\mathcal S}(M_F)\to F,\ g\mapsto g({\tt p})
$$ 
for ${\tt p}\in M_{F}$, we get the natural $\R$-homomorphism 
$$
\psi_{\tt p}:={\rm ev}_{M_F,\tt p}\circ {\tt i}_{M,F}:{\mathcal S}(M)\to F,\ f\mapsto f_F({\tt p}).
$$ 
Denote the restriction of the linear projection onto the $i$th coordinate to $M$ with $\pi_i:M\to\R$. In \cite[Intr. Lem. 1, p.3]{fe3} we prove that if ${\tt p}:=({\tt p}_1,\ldots,{\tt p}_m)\in M_F$, the $\R$-homomorphism $\psi_{\tt p}$ is the unique one satisfying $\pi_i\mapsto {\tt p}_i$ for $i=1,\ldots,m$. 

\subsection{Picture of the involved rings and fields}\label{picture} 

The following diagram summarizes the picture of rings and fields we use to define the free maximal ideals associated with formal and semialgebraic paths:
\begin{equation}\label{bullet2}
\xymatrix{
\R[\t]\ar@{^(->}[r]\ar@{^(->}[d]&\R[[\t]]_{\rm alg}\ar@{^(->}[r]\ar@{^(->}[d]&\R[[\t]]\ar@{^(->}[d]\\
\R(\t)\ar@{^(->}[r]&\R((\t))_{\rm alg}\ar@{^(->}[d]\ar@{^(->}[r]&\R((\t))\ar@{^(->}[d]\\
&F_0:=\R((\t^*))_{\rm alg}\ar@{^(->}[r]&F_1:=\R((\t^*))
}
\end{equation}
As usual $\R[[\t]]$ stands for the ring of formal power series in one variable with coefficients in $\R$ and $\R((\t))$ for its field of fractions. We say that a formal power series is \em algebraic \em if it is algebraic over the field of rational functions $\R(\t):=\qf(\R[\t])$. The subring (resp. subfield) of $\R[[\t]]$ (resp. $\R((\t))$) of all algebraic series is denoted with $\R[[\t]]_{\rm alg}$ (resp. $\R((\t))_{\rm alg}$). Given a formal power series $a\in\R((\t))$, we denote its order with $\omega(a)$ and the $k$-th power of the maximal ideal $(\t)$ of $\R[[\t]]$ with $(\t)^k$. We endow the rings in the diagram above with their respective unique orderings $\leq$, in which ${\tt t}>0$ (and infinitesimal).

In the following we denote the field of Puiseux series with $F_1:=\R((\t^*))$, which is the real closure of $(\R((\t)),\leq)$, and  the field of algebraic Puiseux series with $F_0:=\R((\t^*))_{\rm alg}$, which is the real closure of $(\R((\t))_{\rm alg},\leq)$. 

A \em formal path \em is a tuple $\alpha:=(\alpha_1,\ldots,\alpha_m)\in\R[[\t]]^m$. If $\alpha\in\R[[\t]]_{\rm alg}^m$, then there exists $\veps>0$ such that the map $[0,\veps]\to\R^m,\ t\mapsto\alpha(t)$ is semialgebraic. Conversely, each semialgebraic map $\alpha:[0,1]\to\R^m$ defines an element $\alpha\in\R[[\t]]_{\rm alg}^m$. Thus, the elements of $\R[[\t]]_{\rm alg}^m$ will be called \em semialgebraic paths\em. It is well-known that the positivity of a finite family of polynomials on a formal path $\alpha$ depends only on finitely many terms of the components $\alpha_j$'s of $\alpha$. More precisely,

\begin{lem}\label{positivity}
Let $\alpha\in\R[[\t]]^m$ be a formal path and $P_1,\ldots,P_r\in\R[\x]$ be such that each substitution $P_i(\alpha(\t))>0$. Then there exists a positive integer $k$ such that each substitution $P_i(\gamma(\t))>0$ for every $\gamma\in\R[[\t]]^m$ that satisfies $\|\gamma(\t)-\alpha(\t)\|^2\in(\t)^{2k}$.
\end{lem}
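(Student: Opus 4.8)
The plan is to reduce everything to an estimate on $\t$-adic orders. Write $\x=(\x_1,\ldots,\x_m)$. Since each $P_i(\alpha(\t))$ is a positive element of the ordered ring $\R[[\t]]$, it is in particular nonzero, so $d_i:=\omega(P_i(\alpha(\t)))$ is a well-defined nonnegative integer and we may write $P_i(\alpha(\t))=c_i\t^{d_i}+(\text{terms of order}>d_i)$ with $c_i>0$. I claim that $k:=1+\max\{d_1,\ldots,d_r\}$ (a positive integer) does the job, and I would prove this by showing that replacing $\alpha$ by any admissible $\gamma$ changes $P_i(\alpha(\t))$ only by a series of order $\geq k>d_i$, hence does not affect its lowest-order term $c_i\t^{d_i}$.

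The first step is the elementary observation that the hypothesis $\|\gamma(\t)-\alpha(\t)\|^2\in(\t)^{2k}$ is equivalent to $\gamma_j(\t)-\alpha_j(\t)\in(\t)^k$ for every $j=1,\ldots,m$. Indeed, set $\ell:=\min_j\omega(\gamma_j(\t)-\alpha_j(\t))$; among the summands $(\gamma_j(\t)-\alpha_j(\t))^2$ of $\|\gamma(\t)-\alpha(\t)\|^2$, those of minimal order $2\ell$ have positive leading coefficients (being squares), so no cancellation can occur and $\omega(\|\gamma(\t)-\alpha(\t)\|^2)=2\ell$. Thus $\|\gamma(\t)-\alpha(\t)\|^2\in(\t)^{2k}$ if and only if $\ell\geq k$. (This is the only place where the ordering of $\R$, i.e. its formal reality, intervenes.)

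The second step is the finite Taylor formula for polynomials over the $\Q$-algebra $\R[[\t]]$: setting $h:=\gamma(\t)-\alpha(\t)\in\R[[\t]]^m$,
$$
P_i(\gamma(\t))=P_i(\alpha(\t)+h)=P_i(\alpha(\t))+\sum_{|\beta|\geq1}\tfrac{1}{\beta!}\,(\partial^\beta P_i)(\alpha(\t))\,h^\beta .
$$
Every term in the sum carries a factor $h^\beta$ with $|\beta|\geq1$ and is therefore divisible by some $h_j=\gamma_j(\t)-\alpha_j(\t)\in(\t)^k$, while $(\partial^\beta P_i)(\alpha(\t))\in\R[[\t]]$ has nonnegative order; hence the whole sum lies in $(\t)^k$. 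Consequently
$$
\omega\big(P_i(\gamma(\t))-P_i(\alpha(\t))\big)\geq k>d_i=\omega(P_i(\alpha(\t))),
$$
so $P_i(\gamma(\t))$ has the same lowest-order term $c_i\t^{d_i}$ as $P_i(\alpha(\t))$, and since $c_i>0$ this gives $P_i(\gamma(\t))>0$ in $\R[[\t]]$ for each $i$, as required.

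I do not anticipate a genuine obstacle: the whole argument is bookkeeping of $\t$-adic orders together with the Taylor expansion of a polynomial and the fact that $\omega$ is a valuation on $\R[[\t]]$. The only mildly delicate point is the one flagged above — that passing from $\|\gamma(\t)-\alpha(\t)\|^2\in(\t)^{2k}$ to the componentwise control $\gamma_j(\t)-\alpha_j(\t)\in(\t)^k$ relies on a sum of squares of power series having no cancellation in its leading term, equivalently on $\R$ (hence $\R[[\t]]$) being formally real.
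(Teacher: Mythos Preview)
Your proof is correct and follows essentially the same approach as the paper: both define $k:=1+\max_i\omega(P_i(\alpha(\t)))$, observe that the hypothesis forces $\gamma(\t)-\alpha(\t)$ to have components in $(\t)^k$, and then expand $P_i(\alpha+h)$ to see that the lowest-order term of $P_i(\alpha(\t))$ is unchanged. The only cosmetic differences are that you use the multivariable Taylor formula directly whereas the paper packages the same expansion via an auxiliary polynomial $H_i(\x,\y,\s)=P_i(\x+\s\y)$, and that you spell out the no-cancellation argument for $\|\gamma-\alpha\|^2$ which the paper leaves implicit when it writes $\gamma(\t)=\alpha(\t)+\t^k\beta(\t)$.
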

\begin{proof}
Since $P_i(\alpha(\t))>0$, we can write $P_i(\alpha(\t))=a_i\t^{q_i}+\cdots$ where $q_i=\omega(P_i(\alpha(\t)))$ and $a_i>0$. Next, write $H_i(\x,\y,\s)=P_i(\x+\s\y)$ and let $F_{ij}\in\R[\x,\y]$ be polynomials such that $H_i(\x,\y,\s)=P_i(\x)+\sum_{j=1}^{s_i}\s^j F_{ij}(\x,\y)$. Define $k:=1+\max\{q_i: 1\leq i\leq r\}$. If $\gamma\in\R[[\t]]^m$ satisfies $\|\gamma(\t)-\alpha(\t)\|^2\in(\t)^{2k}$, then there exists $\beta:=(\beta_1,\ldots,\beta_m)\in\R[[\t]]^m$ such that $\gamma(\t)=\alpha(\t)+\t^{k}\beta(\t)$. After the substitution $\x=\alpha(\t)$, $\s=\t^{k}$ and $\z=\beta(\t)$, we have
\begin{equation*}
\begin{split}
P_i(\gamma(\t))&=H_i(\alpha(\t),\beta(\t),\t^{k})=P_i(\alpha(\t))+\sum_{j=1}^{s_i}(\t)^{kj} F_{ij}(\alpha(\t),\beta(\t))=a_i\t^{q_i}+\cdots;
\end{split}
\end{equation*}
hence, $P_i(\gamma(\t))>0$ for $i=1,\ldots, r$.
\end{proof}

\begin{cor}\label{compact}
Let $\alpha\in M_{F_1}$ be a formal path. Then 
\begin{itemize}
\item[(i)] The point $\alpha(0)\in\cl_{\R^m}(M)$.
\item[(ii)] If $p:=\alpha(0)\in M$, there exists a compact semialgebraic set $K\subset M$ such that $\alpha\in K_{F_1}$ and $p\in K$.
\end{itemize}
\end{cor}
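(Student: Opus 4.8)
This is a one‑line transfer argument and needs none of the machinery of the section. Suppose $\alpha(0)\notin\cl_{\R^m}(M)$. Since $\cl_{\R^m}(M)$ is closed there is a real $\veps>0$ with $\ol{\Bb}(\alpha(0),\veps)\cap M=\varnothing$, so the polynomial $P:=\sum_{i=1}^m(\x_i-\alpha_i(0))^2-\veps^2\in\R[\x]$ satisfies $P(x)>0$ for every $x\in M$. By the Transfer Principle the extension of this first–order property to $F_1$ yields $P(\alpha(\t))>0$. But $\alpha_i(\t)-\alpha_i(0)\in(\t)$ for each $i$, so $\sum_i(\alpha_i(\t)-\alpha_i(0))^2\in(\t)^2$ is a non‑negative infinitesimal and hence $P(\alpha(\t))=\sum_i(\alpha_i(\t)-\alpha_i(0))^2-\veps^2<0$, a contradiction. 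Thus $\alpha(0)\in\cl_{\R^m}(M)$.

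\textbf{Part (ii): reduction to the locally compact case.} Consider the finite decreasing chain $M=M^{(0)}\supseteq M^{(1)}\supseteq\cdots$ with $M^{(i+1)}:=\rho_1(M^{(i)})$; since $\dim\rho_1(T)<\dim T$ whenever $T$ is not closed (Proposition \ref{rho} together with \cite[2.8.13]{bcr}), one has $M^{(s+1)}=\varnothing$ for some $s$. Each $M^{(i)}$ is a bounded semialgebraic subset of $M$ which is closed in $M$ (by Proposition \ref{rho}, $\rho_1(T)=\cl_{\R^m}(\rho_0(T))\cap T$ is closed in $T$, and a closed subset of a closed subspace is closed), and $M^{(i)}_{\lc}=M^{(i)}\setminus M^{(i+1)}$. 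Hence $M=\bigsqcup_{i=0}^s M^{(i)}_{\lc}$ is a finite disjoint union of semialgebraic sets, and extending coefficients $M_{F_1}=\bigsqcup_{i=0}^s(M^{(i)}_{\lc})_{F_1}$. So there is a unique $j$ with $\alpha\in(M^{(j)}_{\lc})_{F_1}\subset(M^{(j)})_{F_1}$; by part (i) applied to $M^{(j)}$ and $M^{(j)}$ being closed in $M$, $\alpha(0)\in\cl_{\R^m}(M^{(j)})\cap M=M^{(j)}$. Since a compact semialgebraic $K\subset M^{(j)}\subset M$ with the required properties would do, we may replace $M$ by $M^{(j)}$ and assume from now on that $p:=\alpha(0)\in M$ and $\alpha\in(M_{\lc})_{F_1}$.

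\textbf{Part (ii): construction of $K$.} If $M$ is compact take $K=M$; so assume $\rho_0(M)\ne\varnothing$ and set $g:=\dist(\cdot,\rho_0(M))\colon\cl_{\R^m}(M)\to[0,\infty)$, a continuous semialgebraic function bounded by $\operatorname{diam}\cl_{\R^m}(M)$. For an even $N\in\N$ put
$$
K_N:=\{x\in\cl_{\R^m}(M)\,:\,g(x)\ge\|x-p\|^N\}.
$$
This is a closed subset of the compact set $\cl_{\R^m}(M)$, hence compact and semialgebraic; it contains $p$ because $g(p)\ge 0=\|p-p\|^N$; and $K_N\subset M$, for if $x\in K_N$ lay in $\rho_0(M)$ then $g(x)=0$ would force $\|x-p\|=0$, i.e. $x=p\in\rho_0(M)$, impossible as $p\in M$. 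It remains to pick $N$ so that $g(\alpha)\ge\|\alpha-p\|^N$ in $F_1$, i.e. $\alpha\in(K_N)_{F_1}$. By Proposition \ref{rho}, $M_{\lc}=M\setminus\cl_{\R^m}(\rho_0(M))$, so transfer gives $\alpha\notin(\cl_{\R^m}(\rho_0(M)))_{F_1}$ and therefore $g(\alpha)>0$; moreover $g(\alpha)$ is bounded, so $\omega(g(\alpha))$ is a non‑negative (finite) rational, whereas $\omega(\|\alpha-p\|)\ge 1$ because each $\alpha_i-\alpha_i(0)\in(\t)$ (the case $\alpha=p$, where $K=\{p\}$ obviously works, is covered too). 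Choosing any even $N>\omega(g(\alpha))/\omega(\|\alpha-p\|)$ makes $\omega(\|\alpha-p\|^N)>\omega(g(\alpha))$, so the term of lowest order in $g(\alpha)-\|\alpha-p\|^N$ is the leading term of $g(\alpha)$, whose coefficient is positive since $g(\alpha)>0$; hence $g(\alpha)\ge\|\alpha-p\|^N$ and $K:=K_N$ is as required.

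\textbf{Where the difficulty lies.} The genuine obstacle is that $p=\alpha(0)$ need not lie in $M_{\lc}$ — it may sit on $\cl_{\R^m}(\rho_0(M))$ — so $p$ has no compact neighbourhood in $M$ and the naive candidate $M\cap\ol{\Bb}(p,\veps)$ is not compact. The filtration by the $\rho_1$–operator is exactly what forces $\alpha\in(M_{\lc})_{F_1}$, and the freedom to let the exponent $N$ depend on $\alpha$ is what absorbs the possibility that $\alpha$ tends to $p$ arbitrarily fast towards $\rho_0(M)$ (for instance tangentially to it), a phenomenon that no single fixed compact set could accommodate. Note that Lemma \ref{positivity} is not needed for this corollary, although it does give an alternative route when $\alpha$ is a semialgebraic path.
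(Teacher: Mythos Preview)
Your proof is correct and takes a genuinely different route from the paper's.

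For part (i) the paper picks a basic open piece $M_1=\{f_1>0,\ldots,f_r>0,g=0\}\subset M$ containing $\alpha$, invokes Lemma~\ref{positivity} together with Artin's approximation theorem to replace $\alpha$ by a semialgebraic path $\gamma$ with $\gamma(0)=\alpha(0)$, and then reads off $\alpha(0)\in\cl_{\R^m}(M_1)$ from the convergent image of $\gamma$. Your direct transfer argument bypasses all of this. For part (ii) the paper again works inside a fixed basic piece $M_1$, truncates $\alpha$ to a polynomial path $\mu$, considers the one-parameter family $\gamma_s(\t)=\mu(\t)+\t^{k+1}s$, and uses Lemma~\ref{positivity} plus an explicit estimate on the remainders $H_i$ to produce $K$ as the intersection of $\{g=0\}$ with the image of a compact box under $(t,s)\mapsto\gamma_s(t)$. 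Your approach instead filters $M$ by iterated $\rho_1$ to force $\alpha\in(M_{\lc})_{F_1}$, and then obtains $K$ as a super-level set $\{g\ge\|x-p\|^N\}$ of the distance to $\rho_0(M)$, with $N$ tuned to the order of $g_{F_1}(\alpha)$.

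The paper's proof stays within the analytic machinery of the section (Lemma~\ref{positivity} and polynomial truncation) and makes the compact set rather explicit in terms of the defining inequalities of $M$. Your proof is more topological: it exploits the $\rho_1$-stratification from Section~\ref{s2} and the \L ojasiewicz-type trick of comparing a semialgebraic function to a power of the distance; this avoids Artin approximation entirely and makes transparent why local compactness is the real issue, as you note in your final paragraph. Either argument is self-contained within the paper's framework, but yours does not use Lemma~\ref{positivity} at all.
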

\begin{proof}
Choose polynomials $f_1,\ldots,f_r,g\in\R[\x]$ such that $M_1:=\{f_1>0,\ldots,f_r>0,g=0\}\subset M$ satisfies $\alpha\in M_{1,F}$. 

(i) By Lemma \ref{positivity} and Artin's approximation theorem for Nash series there exists a semialgebraic path $\gamma\in M_{1,F_1}$ and $\alpha(0)=\gamma(0)$; hence, for $\veps>0$ small enough we have $\im(\gamma|_{(0,\,\veps)})\subset M_1$ and $\alpha(0)=\gamma(0)\in\gamma([0,\veps])=\cl_{\R^m}(\im\gamma|_{(0,\,\veps)})\subset\cl_{\R^m}(M_1)\subset\cl_{\R^m}(M)$.

(ii) Let $k\geq1$ be an integer such that every $\gamma\in\R[[\t]]^m$ with $\|\gamma(\t)-\alpha(\t)\|^2\in(\t)^{2k}$ satisfies $f_i(\gamma(\t))>0$ for $i=1,\ldots,r$ (see Lemma \ref{positivity}). Choose $\mu:=(\mu_1,\ldots,\mu_m)\in\R[\t]^m$ such that $\deg(\mu_i)\leq k$ and $\|\mu(\t)-\alpha(\t)\|^2\in(\t)^{2(k+1)}$. By the choice of $k$, for all $s\in\R^m$ the polynomial path $\gamma_s(\t)=\mu(\t)+\t^{k+1}s\in\R[\t]^m$ satisfies $\gamma_s\in\{f_1>0,\ldots,f_r>0\}_{F_1}$.

We write $f_i(\gamma_{\s}(\t))=a_i\t^{p_i}+\t^{p_i+1}h_i(\t,\s)$ where $h_i\in\R[\t,\s]$ for each $i=1,\ldots,r$ and define
$$
s_0:=\frac{\alpha(\t)-\mu(\t)}{\t^{k+1}}\Big|_{{\tt t}=0}\in\R^m.
$$
Let $L>0$ be such that each $|h_i(t,s)|<L$ for all $(t,s)\in[0,1]\times\ol\Bb(s_0,1)$ and let $0<\veps<1$ be a real number such that each $a_it^{p_i}-t^{p_i+1}L>0$ for all $t\in(0,\veps]$. Notice that $f_i(\gamma_s(t))>0$ for all $(t,s)\in(0,\veps]\times\ol\Bb(s_0,1)$. Now consider the semialgebraic map 
$$
h:[0,\veps]\times\ol\Bb(s_0,1)\subset\R^{m+1}\to\R^m,\ (t,s)\mapsto\gamma_s(t)=\mu(t)+t^{k+1}s
$$
and the compact semialgebraic set $K_0:=h([0,\veps]\times\ol\Bb(s_0,1))$. Since $h(\{0\}\times\ol\Bb(s_0,1))=\{p\}$, we deduce $\{p\}\subset K_0\subset\{f_1>0,\ldots,f_r>0\}\cup\{p\}$. Thus, 
$$
K:=K_0\cap\{g=0\}\subset M_1\cup\{p\}\subset M
$$ 
is a compact semialgebraic set that contains $p$ because by (i) $p\in\cl_{\R^m}(M_1)$ and so $g(p)=0$. Finally, we must check $\alpha\in K_{F_1}$. Indeed, $\alpha(\t)=\mu(\t)+\t^{k+1}\zeta(\t)$ where $\zeta(\t)\in\R[[\t]]^m$ and $\zeta(0)=s_0$; hence, $\zeta\in\Bb(s_0,1)_{F_1}$ and so $\alpha\in K_{0,F_1}$. Since $\alpha\in\{g=0\}_{F_1}$, we conclude $\alpha\in K_{0,F_1}\cap\{g=0\}_{F_1}=K_{F_1}$, as wanted.
\end{proof}

\subsection{Free maximal ideals associated with formal and semialgebraic paths}

Let $\alpha\in M_{F_1}$ be a formal path. By \ref{eoc} there exists a unique homomorphism $\psi_\alpha:{\mathcal S}(M)\to F_1$ such that $\psi_\alpha(\pi_i)=\alpha_i$. We claim: $\psi_\alpha({\mathcal S}^*(M))\subset\R[[{\tt t}^*]]$. 

Indeed, let $f\in{\mathcal S}^*(M)$ and $L>0$ be a constant such that $|f|<L$. Then $L-f>0$ and $f+L>0$; pick $h_1,h_2\in{\mathcal S}^*(M)$ such that $h_1^2=L-f$ and $h_2^2=f+L$. Then
$$
L-\psi_\alpha(f)=\psi_\alpha(h_1^2)=\psi_\alpha(h_1)^2\geq0\quad\text{and}\quad\psi_\alpha(f)+L=\psi_\alpha(h_2^2)=\psi_\alpha(h_2)^2\geq0,
$$
so $|\psi_\alpha(f)|\leq L$; hence, $\psi_\alpha(f)\in\R[[\t^*]]$.

\subsubsection{Free maximal ideals associated with formal paths}\label{mifp} 
Consider the `evaluation' 
$$
{\rm ev}_0:\R[[\t^*]]\to\R,\ \zeta\mapsto\zeta(0)
$$ 
and the $\R$-epimorphism
$$
\varphi_\alpha:={\rm ev}_0\circ\psi_\alpha:{\mathcal S}^*(M)\to\R,\ f\mapsto({\rm ev}_0\circ\psi_\alpha)(f)=\psi_\alpha(f)(0). 
$$
Then $\gtm_\alpha^*:=\ker(\varphi_\alpha)$ is a maximal ideal of ${\mathcal S}^*(M)$. As one can expect, $\gtm_{\alpha}^*=\gtm_{\alpha(0)}^*$ if $\alpha(0)\in M$ and $\gtm^*_\alpha$ is a free maximal ideal of ${\mathcal S}^*(M)$ if $\alpha(0)\in\cl_{\R^m}(M)\setminus M$. In the latter case we call $\gtm_\alpha^*$ \em the free maximal ideal of ${\mathcal S}^*(M)$ associated with $\alpha$\em. We denote the collection of all free maximal ideals of ${\mathcal S}^*(M)$ associated with formal paths with $\widehat{\partial}M\subset\partial M$.

\subsubsection{}
Let us find the maximal ideal $\gtm_\alpha$ of ${\mathcal S}(M)$ corresponding to $\gtm_\alpha^*$ via the homeomorphism $\Phi$ introduced in \ref{homeo}. We call $\gtm_\alpha$ \em the free maximal ideal of ${\mathcal S}(M)$ associated with $\alpha$\em. 

\begin{prop}\label{pmtalpha}
Let $\alpha\in M_{F_1}$ be a formal path and consider $\gtp_\alpha:=\ker(\psi_\alpha)$. Then 
\begin{itemize}
\item[(i)] $\gtp_\alpha=\gtm_\alpha$ is the free maximal ideal of ${\mathcal S}(M)$ satisfying $\gtm_{\alpha}\cap{\mathcal S}^*(M)\subset\gtm_{\alpha}^*$ if $\alpha(0)\not\in M$. 
\item[(ii)]$\gtp_\alpha$ is a prime $z$-ideal of ${\mathcal S}(M)$ of coheight $1$ contained in $\gtm_{\alpha(0)}$ if $\alpha(0)\in M$. 
\end{itemize}
\end{prop}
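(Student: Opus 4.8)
The plan is to analyze the kernel $\gtp_\alpha=\ker(\psi_\alpha)$ of the evaluation homomorphism $\psi_\alpha:{\mathcal S}(M)\to F_1$ directly, exploiting the structure of the diagram in \ref{picture} and the fact, established just above the statement, that $\psi_\alpha({\mathcal S}^*(M))\subset\R[[\t^*]]$. First I would record that $\gtp_\alpha$ is prime, since $F_1$ is a field. For part (i), with $\alpha(0)\notin M$: the composite ${\rm ev}_0\circ\psi_\alpha|_{{\mathcal S}^*(M)}=\varphi_\alpha$ has kernel $\gtm_\alpha^*$, and I would check $\gtp_\alpha\cap{\mathcal S}^*(M)=\gtm_\alpha^*$. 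The inclusion $\subset$ is immediate ($f\in\gtp_\alpha$ gives $\psi_\alpha(f)=0$, hence $\varphi_\alpha(f)=0$); for $\supset$ one uses that a bounded semialgebraic function vanishing at $\alpha$ "to order zero" in fact vanishes — more precisely, if $\varphi_\alpha(f)=0$ then $\psi_\alpha(f)=0$ because $f$ can be approximated: write $f=u\cdot|f|$ appropriately, or better, use $|f|\le|g|$ type Łojasiewicz/$z$-ideal arguments together with $\psi_\alpha(f)\in\R[[\t^*]]$ with zero constant term, so $\psi_\alpha(f)$ lies in the maximal ideal, and then bootstrap via $\psi_\alpha(f^{1/N})$-style roots (as in the boundedness argument preceding the statement) to force $\psi_\alpha(f)=0$. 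Hmm — I would need to think carefully here; the cleanest route is: since $\gtm_\alpha^*$ is maximal in ${\mathcal S}^*(M)$ and $\gtp_\alpha\cap{\mathcal S}^*(M)$ is a proper ideal containing it, equality is automatic once properness is checked, and properness holds because $\psi_\alpha$ is not identically zero on ${\mathcal S}^*(M)$ (e.g. constants map to themselves). Thus $\gtm_\alpha^*=\gtp_\alpha\cap{\mathcal S}^*(M)$, and by the localization description ${\mathcal S}(M)={\mathcal S}^*(M)_{\mathcal W}$ recalled in the proof of Theorem \ref{bound0}, $\gtp_\alpha$ is the unique prime of ${\mathcal S}(M)$ lying over $\gtm_\alpha^*$ and not meeting $\mathcal W$; since $\alpha(0)\notin M$ one sees $\gtp_\alpha$ is free. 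To identify $\gtp_\alpha=\gtm_\alpha$ and see it is maximal, I would argue that ${\mathcal S}(M)/\gtp_\alpha\hookrightarrow F_1$ is an extension in which every element is either a unit or a non-unit-with-inverse-in-the-bigger-field, and use that a free prime $z$-ideal of ${\mathcal S}(M)$ with $\gtp\cap{\mathcal S}^*(M)$ maximal is itself maximal (this is the content of the characterization in \ref{freefixed} combined with $\hgt(\gtm)=\hgt(\gtm^*)$ failing for free ideals, forcing $\gtp_\alpha$ to be exactly the $\gtm_\alpha$ of \ref{homeo}).

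For part (ii), with $p:=\alpha(0)\in M$: by Corollary \ref{compact}(ii) there is a compact semialgebraic $K\subset M$ with $p\in K$ and $\alpha\in K_{F_1}$, and by \ref{closedspec} (or \ref{closedbeta1}) ${\mathcal S}(K)$ is a quotient of ${\mathcal S}(M)$ with $\Specs(K)\cong\cl_{\Specs(M)}(K)$; this lets me assume $M$ is compact, hence (Lemma \ref{compact0}) ${\mathcal S}(M)={\mathcal S}^*(M)$, so that $\psi_\alpha$ already lands in $\R[[\t^*]]$ and ${\rm ev}_0\circ\psi_\alpha$ has kernel $\gtm_p$. Then $\gtp_\alpha=\ker(\psi_\alpha)\subset\ker({\rm ev}_0\circ\psi_\alpha)=\gtm_p$, and $\gtp_\alpha$ is a proper prime: it is a $z$-ideal because if $Z_M(f)\subset Z_M(g)$ and $\psi_\alpha(f)=0$ then, by the Łojasiewicz inequality on the locally closed (indeed compact) set $M$, $|g|^N\le c\,|f|$ for some $N,c$, whence $\psi_\alpha(g)^N=0$ in $F_1$ so $\psi_\alpha(g)=0$. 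It remains to show the coheight of $\gtp_\alpha$ in $\gtm_p$ is exactly $1$: by the semialgebraic depth estimate in \ref{ht}, this coheight is $\le\dgt_M(\gtp_\alpha)-\dgt_M(\gtm_p)=\dgt_M(\gtp_\alpha)-0$, and a direct computation shows $\dgt_M(\gtp_\alpha)=1$ (the path $\alpha$ "lives on a one-dimensional set": take $f\in\gtp_\alpha$ with $\dim Z_M(f)$ minimal; $Z_M(f)$ contains the image of a semialgebraic arc through $p$ by the Artin-approximation/$\gamma$ argument of Corollary \ref{compact}, so $\dim Z_M(f)\ge1$, and conversely one exhibits an $f\in\gtp_\alpha$ with $\dim Z_M(f)=1$ by cutting down to a curve containing $\alpha$), so the coheight is $\le1$; and it is $\ge1$ because $\gtp_\alpha\subsetneq\gtm_p$ (strict since $\gtp_\alpha$ is not fixed-at-$p$ in the strong sense: there is a function vanishing at $p$ but not on $\alpha$, e.g. a suitable coordinate-type combination distinguishing $p$ from the generic point of the arc). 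Finally, I would invoke \cite[4.5]{fe1} exactly as in the proof of Proposition \ref{seteta}(ii) to upgrade "$\dgt_M(\gtp_\alpha)=\dim M$-type maximality" where relevant, or simply conclude coheight $=1$ from the sandwich $1\le\mathrm{coheight}\le1$.

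The main obstacle I anticipate is the careful bookkeeping in part (i): showing that $\gtp_\alpha$ is genuinely \emph{maximal} in ${\mathcal S}(M)$ (not merely prime with maximal contraction to ${\mathcal S}^*(M)$) and that it is \emph{precisely} the ideal $\gtm_\alpha$ singled out by the homeomorphism $\Phi$ of \ref{homeo} — i.e. that $\gtp_\alpha\cap{\mathcal S}^*(M)\subset\gtm_\alpha^*$ with $\gtp_\alpha$ on the nose equal to $\gtm_\alpha$. This requires knowing that for a free maximal ideal the contraction to ${\mathcal S}^*(M)$ is strictly smaller than the associated $\gtm^*$ \emph{unless} one is at the right level, and pinning down that $\psi_\alpha$ surjects onto a real closed field so its kernel is maximal whenever ${\mathcal S}(M)/\gtp_\alpha$ is forced to be a field — which in turn uses that ${\mathcal S}(M)$ is real closed (Schwartz) so residue rings at primes embed nicely. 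I expect this to be the delicate point; part (ii) is comparatively routine given Corollary \ref{compact} and the depth machinery of \ref{ht}.
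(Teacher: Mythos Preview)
Your plan has a genuine gap in both parts, and in each case the difficulty is precisely where the paper deploys its key construction.

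\textbf{Part (i).} The equality $\gtp_\alpha\cap{\mathcal S}^*(M)=\gtm_\alpha^*$ is \emph{false}: only the inclusion $\subset$ holds, and for $\alpha(0)\notin M$ it is strict by \ref{freefixed}. Concretely, $f(x):=\|x-\alpha(0)\|$ lies in $\gtm_\alpha^*$ (since $\psi_\alpha(f)$ has positive order) but not in $\gtp_\alpha$ (since $\psi_\alpha(f)\neq 0$). Your fallback ``$\gtp_\alpha\cap{\mathcal S}^*(M)$ is a proper ideal containing $\gtm_\alpha^*$, hence equal to it'' simply reasserts the wrong inclusion. As a consequence the localization argument collapses: $\gtm_\alpha^*$ meets the multiplicative set ${\mathcal W}$ (the same $f$ above lies in ${\mathcal W}\cap\gtm_\alpha^*$), so there is no prime of ${\mathcal S}(M)$ lying \emph{over} $\gtm_\alpha^*$ at all. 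The paper proves maximality of $\gtp_\alpha$ directly: given $\gtp_\alpha\subsetneq\gtq$ and bounded $f\in\gtq\setminus\gtp_\alpha$, it writes $\psi_\alpha(f)=a\t^b+\cdots$ and $\|\alpha(\t)-\alpha(0)\|=c\t^d+\cdots$, sets
\[
Z:=\Big\{x\in M:\ \tfrac{|a|}{2c^{b/d}}\|x-\alpha(0)\|^{b/d}\le|f(x)|\Big\},
\]
checks $\alpha\in Z_{F_1}$ so that $g:=\dist(\cdot,Z)\in\gtp_\alpha\subset\gtq$, and observes that $h:=f^2+g^2\in\gtq$ has $Z_M(h)=\varnothing$, i.e.\ $h$ is a unit in ${\mathcal S}(M)$---a contradiction. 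No comparison with $\gtm_\alpha^*$ beyond the easy $\subset$ is needed.

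\textbf{Part (ii).} Your route via semialgebraic depth requires exhibiting $f\in\gtp_\alpha$ with $\dim Z_M(f)=1$, which you propose to obtain ``by cutting down to a curve containing $\alpha$''. For a non-algebraic formal path there is \emph{no} semialgebraic curve containing $\alpha$, so this step cannot be carried out, and the bound $\dgt_M(\gtp_\alpha)\le 1$ is not established. The paper bypasses depth entirely: the \emph{same} construction of $Z$, $g$ and $h=f^2+g^2$ as above now gives $Z_M(h)=\{p\}$; after passing to the compact $K\subset M$ of Corollary~\ref{compact}(ii) (so that every prime of ${\mathcal S}(K)$ is a $z$-ideal and pulls back to a $z$-ideal of ${\mathcal S}(M)$), any $\gtq\supsetneq\gtp_\alpha$ is a $z$-ideal containing $h$, hence $\gtq=\gtm_p$. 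This yields coheight~$1$ directly, without ever computing $\dgt_M(\gtp_\alpha)$.
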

\begin{proof}
It is straightforward to check that $\gtp_\alpha$ is a prime $z$-ideal and $\gtp_\alpha\cap{\mathcal S}^*(M)\subset\ker(\varphi_\alpha)=\gtm^*_\alpha$. Let us show next that $\gtp_\alpha$ is a maximal ideal. Otherwise let $\gtq$ be a prime ideal of ${\mathcal S}(M)$ such that $\gtp_\alpha\subsetneq\gtq$ and choose $f\in\gtq\setminus\gtp_\alpha$. Taking $f/(1+f^2)$ instead of $f$, we may assume that $f$ is bounded on $M$.

As $\psi_{\alpha}(f)\in\R[[\t^*]]\setminus\{0\}$, we write $\psi_{\alpha}(f)(\t):=a\t^b+\cdots$ where $a\neq 0$, $b:=\omega(\psi_{\alpha}(f))\in\Q^+$ and $\|\alpha(\t)-p\|:=c\t^d+\cdots$ where $p:=\alpha(0)$, $c\neq 0$ and $d:=\omega(\|\alpha(\t)-p\|)\in\Q^+$. Consider
$$
Z:=\Big\{x\in M:\,\frac{|a|}{2c^{b/d}}\|x-p\|^{b/d}\leq |f(x)|\Big\}
$$ 
and pick $g:=\dist(\cdot,Z)\in{\mathcal S}(M)$, which satisfies $Z_M(g)=Z$. Observe 
$$
\frac{|a|}{2c^{b/d}}\|\alpha(\t)-p\|^{b/d}=\frac{|a|}{2}\t^b+\cdots\qquad\text{and}\qquad\psi_{\alpha}(|f|)(\t)=|a|\t^b+\cdots,
$$
so $\alpha\in Z_{F_1}$; hence, $\psi_\alpha(g)=g_{F_1}(\alpha)=0$ or equivalently $g\in\gtp_\alpha\subset\gtq$.

The zero set of $h:=f^2+g^2\in\gtq$ satisfies
$$
Z_M(h)=\left\{\begin{array}{ll}
\!p&\text{ if $p\in M$,}\\[4pt]
\!\varnothing&\text{ if $p\in\cl_{\R^m}(M)\setminus M$.}
\end{array}\right.
$$ 

(i) In particular, if $p\in\cl_{\R^m}(M)\setminus M$, the function $h\in\gtq$ is a unit in ${\mathcal S}(M)$, which is a contradiction. Thus, $\gtp_\alpha$ is a maximal ideal of ${\mathcal S}(M)$ satisfying $\gtp_\alpha\cap{\mathcal S}^*(M)\subset\gtm^*_\alpha$. 

(ii) On the other hand, if $p\in M$, there exists by Corollary \ref{compact}(ii) a compact semialgebraic set $K\subset M$ such that $p\in K$ and $\alpha\in K_{F_1}$. Since $K\subset M$ is closed in $M$, the homomorphism $\phi:{\mathcal S}(M)\to{\mathcal S}(K),\ f\mapsto f\circ{\tt j}$ induced by the inclusion ${\tt j}:K\hookrightarrow M$ is surjective \cite{dk}. Notice that $\ker\phi\subset\gtp_\alpha\subset\gtq$; hence, $\gtq=\phi^{-1}(\phi(\gtq))$. Moreover, since $K$ is compact, $\phi(\gtq)$ is a $z$-ideal and by \cite[4.1]{fg3}) $\gtq=\phi^{-1}(\phi(\gtq))$ is also a prime $z$-ideal. Since $h\in\gtq$ and $Z_M(h)=\{p\}$, we conclude $\gtq=\gtm_p$ and so $\gtp_\alpha$ is a prime $z$-ideal of coheight one.
\end{proof}

\begin{cor}\label{ppp}
Let $\alpha\in M_{F_1}$ be a formal path such that $\alpha(0)\in\cl_{\R^m}(M)\setminus M$. Then there does not exist any prime ideal between $\gtm_\alpha\cap{\mathcal S}^*(M)$ and $\gtm_\alpha^*$.
\end{cor}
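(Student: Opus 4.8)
The plan is to classify all prime ideals $\gtq$ of ${\mathcal S}^*(M)$ with $\gtm_\alpha\cap{\mathcal S}^*(M)\subseteq\gtq\subseteq\gtm_\alpha^*$ and to show that each such $\gtq$ coincides with one of the two endpoints; that is exactly the assertion. I would first record the ingredients. Since $\alpha(0)\notin M$, Proposition \ref{pmtalpha}(i) says that $\gtm_\alpha=\gtp_\alpha=\ker\psi_\alpha$ is a \emph{free maximal} ideal of ${\mathcal S}(M)$; thus $\gtm_\alpha\cap{\mathcal S}^*(M)=\ker\big(\psi_\alpha|_{{\mathcal S}^*(M)}\big)$ and $\gtm_\alpha\cap{\mathcal S}^*(M)\subsetneq\gtm_\alpha^*$ by \ref{freefixed}. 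Recall also, from the construction in \ref{mifp} together with $\psi_\alpha({\mathcal S}^*(M))\subseteq\R[[\t^*]]$, that a function $f\in{\mathcal S}^*(M)$ lies in $\gtm_\alpha^*=\ker\varphi_\alpha$ if and only if $\psi_\alpha(f)=0$ or $\omega(\psi_\alpha(f))>0$. Finally I use the multiplicative set ${\mathcal W}:=\{f\in{\mathcal S}^*(M):Z_M(f)=\varnothing\}$ and the identity ${\mathcal S}(M)={\mathcal S}^*(M)_{{\mathcal W}}$. The argument splits according to whether $\gtq\cap{\mathcal W}=\varnothing$ or not.

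If $\gtq\cap{\mathcal W}=\varnothing$, then $\gtq\,{\mathcal S}(M)$ is a prime ideal of ${\mathcal S}^*(M)_{{\mathcal W}}={\mathcal S}(M)$ with $\gtq\,{\mathcal S}(M)\cap{\mathcal S}^*(M)=\gtq$, the correspondence between primes of ${\mathcal S}(M)$ and primes of ${\mathcal S}^*(M)$ disjoint from ${\mathcal W}$ being inclusion preserving. Now $\gtm_\alpha\cap{\mathcal S}^*(M)$ is also disjoint from ${\mathcal W}$ (it is contained in the proper ideal $\gtm_\alpha$, which contains no unit of ${\mathcal S}(M)$), and $\big(\gtm_\alpha\cap{\mathcal S}^*(M)\big){\mathcal S}(M)=\gtm_\alpha$: the inclusion $\supseteq$ follows by writing $f\in\gtm_\alpha$ as $f=\big(f/(1+|f|)\big)\big(1/(1+|f|)\big)^{-1}$ with $f/(1+|f|)\in\gtm_\alpha\cap{\mathcal S}^*(M)$ and $1/(1+|f|)\in{\mathcal W}$. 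Hence $\gtm_\alpha\cap{\mathcal S}^*(M)\subseteq\gtq$ extends to $\gtm_\alpha\subseteq\gtq\,{\mathcal S}(M)$, and since $\gtm_\alpha$ is maximal and $\gtq\,{\mathcal S}(M)$ proper, $\gtq\,{\mathcal S}(M)=\gtm_\alpha$, so $\gtq=\gtm_\alpha\cap{\mathcal S}^*(M)$.

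If $\gtq\cap{\mathcal W}\neq\varnothing$, I claim $\gtq=\gtm_\alpha^*$, and as $\gtq\subseteq\gtm_\alpha^*$ it suffices to show $\gtm_\alpha^*\subseteq\gtq$. Pick $v\in\gtq\cap{\mathcal W}$; replacing $v$ by $v^2$ we may assume $v>0$ on $M$, so $\psi_\alpha(v)>0$ in $\R[[\t^*]]$ and, since $v\in\gtq\subseteq\gtm_\alpha^*$, $b:=\omega(\psi_\alpha(v))>0$. Let $g\in\gtm_\alpha^*$; replacing $g$ by $g^2$ and using that $\gtq$ and $\gtm_\alpha^*$ are prime, we may assume $g\geq0$ on $M$, and if $\psi_\alpha(g)=0$ then $g\in\gtm_\alpha\cap{\mathcal S}^*(M)\subseteq\gtq$; otherwise $c:=\omega(\psi_\alpha(g))>0$. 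Choose $N\in\N$ with $Nc>b$ and put $h:=g^N/(g^N+v)\in{\mathcal S}^*(M)$, well defined and valued in $[0,1)$ because $v>0$ and $g\geq0$. The identity $g^N(1-h)=vh$ gives $g^N(1-h)\in\gtq$. On the other hand $\psi_\alpha(1-h)=\psi_\alpha(v)/\big(\psi_\alpha(g)^N+\psi_\alpha(v)\big)$, and since $\omega(\psi_\alpha(g)^N)=Nc>b=\omega(\psi_\alpha(v))$ the denominator has order $b$; hence $\omega(\psi_\alpha(1-h))=0$, that is $1-h\notin\gtm_\alpha^*\supseteq\gtq$. As $\gtq$ is prime, $g^N(1-h)\in\gtq$ and $1-h\notin\gtq$ force $g^N\in\gtq$, so $g\in\gtq$; thus $\gtm_\alpha^*\subseteq\gtq$.

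Together the two cases show that any prime ideal $\gtq$ with $\gtm_\alpha\cap{\mathcal S}^*(M)\subseteq\gtq\subseteq\gtm_\alpha^*$ equals $\gtm_\alpha\cap{\mathcal S}^*(M)$ or $\gtm_\alpha^*$, which is the statement. The first case I expect to be routine, using only the localization ${\mathcal S}(M)={\mathcal S}^*(M)_{{\mathcal W}}$ and the maximality of $\gtm_\alpha$. The main obstacle is the second case: from the single element $v\in\gtq$ one must force an arbitrary $g\in\gtm_\alpha^*$ into $\gtq$, and the device for this is the bounded auxiliary function $h=g^N/(g^N+v)$, designed so that $g^N(1-h)$ becomes a multiple of $v$ while $1-h$ is forced to be a unit modulo $\gtm_\alpha^*$; arranging that $\omega(\psi_\alpha(1-h))$ is exactly $0$ is what pins down the exponent $N$ and the positivity normalizations, and this is precisely where the rank-one valuation of $\R[[\t^*]]$ enters.
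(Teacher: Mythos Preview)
Your proof is correct and takes a genuinely different route from the paper's. The paper argues by passing to a semialgebraic compactification $(X,{\tt j})$ of $M$: invoking \cite[1.A.2]{fg2}, any chain $\gtp_0=\gtm_\alpha\cap{\mathcal S}^*(M)\subsetneq\cdots\subsetneq\gtp_r=\gtm_\alpha^*$ contracts to a chain $\gtq_0\subsetneq\cdots\subsetneq\gtq_r$ in ${\mathcal S}(X)$ with $\gtq_0=\ker(\psi_{{\tt j}\circ\alpha})$; since $({\tt j}\circ\alpha)(0)\in X$, Proposition~\ref{pmtalpha}(ii) says $\gtq_0$ has coheight $1$, forcing $r=1$. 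Your argument instead stays inside ${\mathcal S}^*(M)$ and splits on whether $\gtq$ meets the multiplicative set ${\mathcal W}$: the case $\gtq\cap{\mathcal W}=\varnothing$ is handled by the localization ${\mathcal S}(M)={\mathcal S}^*(M)_{{\mathcal W}}$ and maximality of $\gtm_\alpha$, while for $\gtq\cap{\mathcal W}\neq\varnothing$ you build the auxiliary function $h=g^N/(g^N+v)$ and use the rank-one valuation $\omega$ on $\R[[\t^*]]$ to force $1-h\notin\gtm_\alpha^*$. This second approach is more self-contained (it avoids the external reference \cite[1.A.2]{fg2} and the compactification machinery) and makes the role of the valuation explicit; the paper's approach is shorter once that machinery is in place and unifies the treatment with the case $\alpha(0)\in M$ handled in Proposition~\ref{pmtalpha}(ii).
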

\begin{proof}
Let $\gtp_0:=\gtm_\alpha\cap{\mathcal S}^*(M)\subsetneq\cdots\subsetneq\gtp_r=\gtm_\alpha^*$ be the collection of all prime ideals of ${\mathcal S}^*(M)$ containing $\gtp_0$. By \cite[1.A.2]{fg2} there exists a semialgebraic compactification $(X,{\tt j})$ of $M$ and a chain of prime ideals $\gtq_0\subsetneq\cdots\subsetneq\gtq_r$ of ${\mathcal S}(X)$ such that $\gtq_i=\gtp_i\cap{\mathcal S}(X)$. Assume $X\subset\R^m$ and notice $\gtq_0=\ker(\psi_{{\tt j}\circ\alpha})$ where ${\tt j}\circ\alpha\in\R[[\t^*]]^m$. After reparametrizing $\alpha$ if necessary, we may assume ${\tt j}\circ\alpha\in\R[[\t]]^m$. By Proposition \ref{pmtalpha} $\gtq_0$ has coheight $1$, that is, $r=1$, as wanted.
\end{proof}

\subsubsection{Free maximal ideals associated with semialgebraic paths}\label{misp}
The collection of all free maximal ideals $\gtm_\alpha$ of ${\mathcal S}^*(M)$ corresponding to semialgebraic paths $\alpha\in M_{F_0}$ is denoted with $\widetilde{\partial} M$. Of course, $\widetilde{\partial} M\subset\widehat{\partial}M\subset\partial M$ and in general both inclusions are strict and the differences are `large' (see the proof in Appendix \ref{diferencias}). The uniqueness of the homomorphism $\psi_\alpha$ guarantees that if $\alpha\in M_{F_0}$ is a semialgebraic path, the $\R$-homomorphism $\psi_\alpha:{\mathcal S}(M)\to F_0$ is defined by $f\mapsto f\circ\alpha$. Moreover, if $\alpha\in M_{F_0}$ and $\alpha(0)\in\cl_{\R^m}(M)\setminus M$, then
\begin{align*}
\gtm_\alpha^*&=\{f\in{\mathcal S}^*(M):\,\lim_{t\to0^+}(f\circ\alpha)(t)=0\}\\
\gtm_\alpha&=\{f\in{\mathcal S}(M):\,\exists\,\veps>0\text{ such that }(f\circ\alpha)|_{(0,\,\veps]}=0\}.
\end{align*}

Let us show next that the prime $z$-ideals of semialgebraic depth equal to $1$ in ${\mathcal S}(M)$ are the prime ideals $\gtp_\alpha:=\ker(\psi_\alpha)$ where $\alpha\in M_{F_0}$ is a semialgebraic path. Of course, if $\alpha\in M_{F_0}$ is a non-trivial semialgebraic path, we may assume $\alpha_1(\t)=\t^p$ and $\alpha_i\in\R[[\t]]_{\rm alg}$. Thus, $\qf({\mathcal S}^*(M)/\gtp_\alpha)=\R((\t^*))_{\rm alg}$ is the real closure of $\R[\t]$ and by \cite[Thm.3, p.3]{fg2} we conclude ${\tt d}_M(\gtp_\alpha)=1$. The converse is the following lemma.

\begin{lem}
Let $\gtp$ be a prime $z$-ideal of ${\mathcal S}(M)$ with semialgebraic depth ${\tt d}_M(\gtp)=1$. Then there exists a semialgebraic path $\alpha\in M_{F_0}$ such that $\gtp=\gtp_\alpha$.
\end{lem}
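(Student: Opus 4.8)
The plan is to reverse the construction in \ref{misp}: starting from a prime $z$-ideal $\gtp$ with $\dgt_M(\gtp)=1$, I will produce a semialgebraic path $\alpha\in M_{F_0}$ whose associated ideal $\gtp_\alpha$ equals $\gtp$. First I would pick a function $f_0\in\gtp$ with $\dim Z_M(f_0)=1$, so that the curve $C:=Z_M(f_0)$ is a $1$-dimensional semialgebraic set on which $\gtp$ ``lives''. Because $\gtp$ is a $z$-ideal, $f_0\in\gtp$ forces every $g\in{\mathcal S}(M)$ with $Z_M(g)\supset C$ to lie in $\gtp$; hence $\gtp$ contains the $z$-ideal $\gtj_C$ of functions vanishing on $C$, and passing to the quotient we get a prime ideal $\ol\gtp:=\gtp/\gtj_C$ of ${\mathcal S}(C)$ (using that restriction ${\mathcal S}(M)\to{\mathcal S}(C)$ is surjective, as $C$ is closed in $M$ by being a zero set) with $\dgt_C(\ol\gtp)=1$, i.e. $\ol\gtp$ is a minimal prime ideal of ${\mathcal S}(C)$ (every nonzero function on a curve has $0$-dimensional zero set, so depth $1$ means no function of $\ol\gtp$ cuts down the dimension — by \cite[4.5]{fe1} this is exactly minimality).

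Next I would localize at the curve: $\dim C\le 1$, so $C$ is locally compact by Remarks \ref{curvas}(i), hence $C=C_{\lc}$ and all prime ideals of ${\mathcal S}(C)$ are $z$-ideals. A $1$-dimensional semialgebraic set decomposes (via \cite[2.9.10]{bcr}) into finitely many points and open Nash arcs, each Nash-diffeomorphic to $(0,1)$; the minimal prime $\ol\gtp$ is supported on one such arc $\Gamma$, and the half-branches of $\Gamma$ at its two ``ends'' give the candidate semialgebraic paths. Concretely, after a semialgebraic parametrization $\gamma:(0,1)\to\Gamma\subset M$ (which may be taken Nash, hence algebraic), the two one-sided germs $t\mapsto\gamma(t)$ as $t\to0^+$ and $t\mapsto\gamma(1-t)$ as $t\to0^+$ are semialgebraic paths in $M_{F_0}$, and I would check that $\gtp$ coincides with $\gtp_\alpha$ for one of these two choices. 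The point is that a minimal prime of ${\mathcal S}(\Gamma)\cong{\mathcal S}((0,1))$ is precisely the ideal of functions vanishing near one of the two endpoints, and this is tautologically $\gtp_\alpha$ for the corresponding half-branch; pulling back along ${\mathcal S}(M)\twoheadrightarrow{\mathcal S}(C)\twoheadrightarrow{\mathcal S}(\Gamma)$ (the density of $\Gamma$ in its closure handles the last map) identifies it with $\ker(\psi_\alpha)$ on ${\mathcal S}(M)$.

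The main obstacle I anticipate is the identification step: showing that a minimal prime $z$-ideal of ${\mathcal S}(C)$ for a reducible $1$-dimensional $C$ really is ``the germ of vanishing near one end of one arc'' — one must be careful that different arcs may share endpoints, that the arc need not be closed in $C$, and that ${\mathcal S}(\Gamma)$ for a non-closed $\Gamma$ is not a quotient of ${\mathcal S}(C)$ (only a localization or a restriction to a dense subset). The clean way around this is to enlarge $\Gamma$ to its closure $\ol\Gamma$ in $C$, which is a closed semialgebraic arc (an interval, half-open interval, or circle), use surjectivity of ${\mathcal S}(C)\to{\mathcal S}(\ol\Gamma)$, and there invoke Proposition \ref{pmtalpha}/the discussion before this lemma together with \cite[Thm.3, p.3]{fg2} characterizing depth-$1$ prime $z$-ideals by $\qf({\mathcal S}/\gtp)\cong\R((\t^*))_{\rm alg}$; once the residue field is pinned down as the real closure of $\R[\t]$, the evaluation-at-the-generic-point-of-one-branch gives the path $\alpha$ directly. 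I would finally double-check that $\alpha\in M_{F_0}$ (not just $C_{F_0}$), which is immediate since $C\subset M$, and that $\alpha(0)\in\cl_{\R^m}(M)$, which holds by Corollary \ref{compact}(i), with $\alpha(0)\in M$ or not according to whether $\gtp\subset\gtm_p$ for some $p\in M$.
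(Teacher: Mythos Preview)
Your proposal is correct and follows essentially the same route as the paper: pick $f\in\gtp$ with $\dim Z_M(f)=1$, use the $z$-ideal hypothesis to descend to a minimal prime of ${\mathcal S}(C)$ for $C=Z_M(f)$, decompose the curve into Nash arcs via \cite[2.9.10]{bcr}, use primality to single out one arc, and identify the minimal prime on an interval with the germ at one endpoint. The only cosmetic difference is that the paper refines the decomposition up front so that the closure of each arc in $M$ is already semialgebraically homeomorphic to $(0,1]$ or $[0,1]$, which sidesteps the issue you flag (non-closed $\Gamma$, circles) and lets one write the path $\alpha$ immediately as the extension of the parametrization $h:I\to N$ to $[0,1]$.
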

\begin{proof}
Choose $f\in\gtp$ such that $\dim(Z_M(f))=1$ and $Z_M(f)$ has no isolated points. By \cite[2.9.10]{bcr} $Z_M(f)$ is the disjoint union of finitely many points $p_{k}$ and a finite number of Nash curves $M_{k}$ where each of them is Nash diffeomorphic to an open interval $(0,1)$. In fact, $Z_M(f)=\bigcup_k\cl_{M}(M_k)$ and we may assume that each $\cl_{M}(M_k)$ is semialgebraically homeomorphic either to $(0,1]$ or $[0,1]$. Write $\cl_M(M_k)=Z_M(g_k)$ for some $g_k\in{\mathcal S}(M)$, so $Z_M(f)=Z_M(g)$ where $g:=g_1\cdots g_k$. Since $\gtp$ is a prime $z$-ideal, we obtain $g\in\gtp$ and may assume $g_1\in\gtp$. Denote $N:=\cl_M(M_1)=Z_M(g_1)$ and recall that the homomorphism $\theta:{\mathcal S}(M)\to{\mathcal S}(N)$ is surjective by \cite{dk}. As $\ker(\theta)\subset\gtp$, there exists a prime ideal $\gtq$ of ${\mathcal S}(N)$ such that
${\mathcal S}(M)/\gtp\cong{\mathcal S}(N)/\gtq$. In fact, $\gtq$ is a minimal prime ideal of ${\mathcal S}(N)$ by \cite[4.1]{fe1}. 

By construction, $N$ is semialgebraically homeomorphic to either $I:=(0,1]$ or $I:=[0,1]$ via a semialgebraic homeomorphism $h:I\to N$; hence, the rings ${\mathcal S}(I)$ and ${\mathcal S}(N)$ are isomorphic. We may assume by \cite[4.1]{fe1} that $\gtq$ corresponds to the minimal prime ideal $\gtp_0:=\{a\in{\mathcal S}(I):\ \exists\,\veps>0,\ a|_{(0,\veps]}=0\}$. Moreover, as $N$ is bounded, $h$ can be extended to a semialgebraic path $\alpha:[0,1]\to \R^m$ such that $\alpha((0,1])\subset N\subset M$; hence, $\alpha\in\R[[\t]]_{\rm alg}^m\cap M_{F_0}$. Now it is straightforward to check $\gtp=\gtp_\alpha$, as required.
\end{proof}

\subsection{Density of $\widetilde{\partial}M$ in $\partial M$.}\label{ppoints}
Although $\widetilde{\partial} M\neq\partial M$, we prove easily that the latter is a dense subset of $\partial M$. 

\begin{lem}\label{deltam}
\em (i) \em Let $f_i\in{\mathcal S}^*(M)$ and $\widehat{f_i}:\betaa M\to \R$ be the unique continuous extension of $f_i$ to $\betaa M$. Then $(\widehat{f_1},\ldots,\widehat{f_r})(\widetilde{\partial}M)=(\widehat{f_1},\ldots,\widehat{f_r})(\partial M)$ for each $r\geq1$.

\em (ii) \em A function $f\in{\mathcal S}^*(M)$ is a unit if and only if $Z_M(f)=\varnothing$ and $f\not\in\gtm_\alpha^*$ for all $\gtm_\alpha^*\in\widetilde{\partial}M$. 

\em (iii) \em The set $\widetilde{\partial}M$ is dense in $\partial M$.
\end{lem}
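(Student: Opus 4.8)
The plan is to prove the three parts in the order (i), (ii), (iii), deriving each from the previous one together with the structural results on formal paths already established.

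For part (i), the nontrivial inclusion is $(\widehat{f_1},\ldots,\widehat{f_r})(\partial M)\subset(\widehat{f_1},\ldots,\widehat{f_r})(\widetilde{\partial}M)$. So I would pick an arbitrary point $\gtm^*\in\partial M$ and produce a semialgebraic path realizing the same values. First I would reduce to a formal path: by Corollary \ref{ppp} and the description of $\partial M$ in Section \ref{s4}, it is enough to treat a point of $\widehat{\partial}M$, i.e.\ $\gtm^*=\gtm_\alpha^*$ for a formal path $\alpha\in M_{F_1}$ with $\alpha(0)\in\cl_{\R^m}(M)\setminus M$; the general case of $\partial M$ would be handled by first showing $\widehat\partial M$ is dense in $\partial M$, or more directly by using that every maximal ideal of ${\mathcal S}^*(M)$ specializes to a value that can already be attained along a formal path via the evaluation $\psi_\alpha$ of \ref{eoc}--\ref{mifp}. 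Here the key point is that $\widehat{f_j}(\gtm_\alpha^*)=f_j\circ\alpha$ evaluated at $t=0$, i.e.\ $\psi_\alpha(f_j)(0)=\varphi_\alpha(f_j)$; then I would apply Lemma \ref{positivity} together with Artin approximation for Nash series (exactly as in the proof of Corollary \ref{compact}(i)) to produce a \emph{semialgebraic} path $\gamma\in M_{F_0}$ with $\gamma(0)=\alpha(0)$ and with $f_j\circ\gamma$ and $f_j\circ\alpha$ having the same constant term for $j=1,\ldots,r$ — this is where one must be careful, since approximating $\alpha$ by a semialgebraic path only preserves finitely many conditions, so one first writes down the finitely many polynomial inequalities/equalities (coming from the data of $M_1\subset M$ and from bounding $|f_j\circ\gamma - (f_j\circ\alpha)(0)|<\veps$ for each $\veps$) that must be preserved, and then applies the approximation theorem. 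Then $(\widehat{f_1},\ldots,\widehat{f_r})(\gtm_\gamma^*)=(\widehat{f_1},\ldots,\widehat{f_r})(\gtm_\alpha^*)$ and $\gtm_\gamma^*\in\widetilde\partial M$, which gives (i).

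For part (ii): if $f$ is a unit then clearly $Z_M(f)=\varnothing$ and $f\notin\gtm^*$ for every maximal ideal, in particular for those in $\widetilde\partial M$. Conversely, suppose $Z_M(f)=\varnothing$ but $f$ is not a unit; then $f$ lies in some maximal ideal $\gtm^*$ of ${\mathcal S}^*(M)$. Since $Z_M(f)=\varnothing$, this $\gtm^*$ cannot be fixed, so $\gtm^*\in\partial M$, hence $\widehat f(\gtm^*)=0$. By part (i) applied with $r=1$ to the single function $f$, there is $\gtm_\gamma^*\in\widetilde\partial M$ with $\widehat f(\gtm_\gamma^*)=\widehat f(\gtm^*)=0$, i.e.\ $f\in\gtm_\gamma^*$; this is the desired contradiction.

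For part (iii): suppose $\widetilde\partial M$ is not dense in $\partial M$. Since the sets ${\mathcal D}_{\betaa M}(f)$ with $f\in{\mathcal S}^*(M)$ form a basis, there is $f\in{\mathcal S}^*(M)$ with ${\mathcal D}_{\betaa M}(f)\cap\partial M\neq\varnothing$ but ${\mathcal D}_{\betaa M}(f)\cap\widetilde\partial M=\varnothing$; that is, $f\notin\gtm^*$ for some $\gtm^*\in\partial M$ while $f\in\gtm_\alpha^*$ for every $\gtm_\alpha^*\in\widetilde\partial M$. Replacing $f$ by a suitable element (e.g.\ $f^2$ plus the square of a function with empty zero set, or using that $f\notin\gtm^*$ forces $\widehat f(\gtm^*)\neq 0$) I would arrange $g\in{\mathcal S}^*(M)$ with $Z_M(g)=\varnothing$ — concretely, take $g:=f^2+e$ where $e\in{\mathcal S}^*(M)$ has $Z_M(e)=Z_M(f)$ and $e>0$ off $Z_M(f)$, so $Z_M(g)=\varnothing$ — and with $g\in\gtm_\alpha^*$ for all $\gtm_\alpha^*\in\widetilde\partial M$, since $f\in\gtm_\alpha^*\Rightarrow \widehat f(\gtm_\alpha^*)=0$ and $\widehat e(\gtm_\alpha^*)=0$ too (as $Z_M(e)=Z_M(f)$ and $\gtm_\alpha^*\in\partial M$ already lies in $\cl_{\betaa M}(Z_M(f))$... this last point needs the identification $\widehat e = \widehat f$-vanishing locus). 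Then $g$ contradicts part (ii). The cleanest route, which I would actually adopt, is to avoid this fiddling: given $\gtm^*\in\partial M$ with $f\notin\gtm^*$, part (i) with $r=1$ directly yields $\gtm_\gamma^*\in\widetilde\partial M$ with $\widehat f(\gtm_\gamma^*)=\widehat f(\gtm^*)\neq 0$, i.e.\ $f\notin\gtm_\gamma^*$, so $\gtm_\gamma^*\in{\mathcal D}_{\betaa M}(f)\cap\widetilde\partial M$, contradicting the choice of $f$. Hence $\widetilde\partial M$ is dense in $\partial M$. The main obstacle in the whole argument is part (i), specifically the careful bookkeeping needed to apply Lemma \ref{positivity} and Artin approximation so that the semialgebraic path preserves simultaneously the membership $\gamma\in M_{F_0}$ and the finitely many value constraints $|\,\widehat{f_j}(\gtm_\gamma^*)-\widehat{f_j}(\gtm_\alpha^*)\,|<\veps$ for all $\veps>0$; once (i) is in place, (ii) and (iii) are short.
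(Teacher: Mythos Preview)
Your arguments for parts (ii) and (iii) --- in the ``cleanest route'' you describe --- are correct and essentially identical to the paper's. The genuine gap is in part (i).

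The reduction to formal paths does not work. You invoke Corollary \ref{ppp} and ``the description of $\partial M$ in Section \ref{s4}'' to claim it suffices to treat $\gtm^*\in\widehat{\partial}M$, but nothing in Section \ref{s4} says that an arbitrary point of $\partial M$ is (or can be approximated by) a point associated with a formal path; in fact the Appendix proves that $\partial M\setminus\widehat{\partial}M$ is typically nonempty and even dense. Your fallback --- first proving $\widehat{\partial}M$ dense in $\partial M$ --- is circular (density of $\widetilde{\partial}M$, hence of $\widehat{\partial}M$, is exactly part (iii) of this lemma) and in any case insufficient: density of $\widehat{\partial}M$ in the compact set $\partial M$ would only give that $(\widehat{f_1},\ldots,\widehat{f_r})(\widehat{\partial}M)$ is dense in $(\widehat{f_1},\ldots,\widehat{f_r})(\partial M)$, not that they coincide. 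Moreover, even within the formal-path case your plan to use Lemma \ref{positivity} plus Artin approximation is problematic, since that lemma controls \emph{polynomial} sign conditions, while you need to preserve the values at $t=0$ of the \emph{semialgebraic} functions $f_j\circ\alpha$; and your phrase ``$|\widehat{f_j}(\gtm_\gamma^*)-\widehat{f_j}(\gtm_\alpha^*)|<\veps$ for all $\veps>0$'' asks a single approximation to satisfy infinitely many constraints.

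The paper's proof of (i) bypasses all of this with a single geometric move: it replaces $M$ by the graph $\Gamma\subset\R^{m+r}$ of $f=(f_1,\ldots,f_r)$, uses the proper map $\Psi=(\rho,\widehat{f}):\betaa M\to\cl_{\R^{m+r}}(\Gamma)$ to send an arbitrary $\gtm^*\in\partial M$ to a point $(p,a)\in\cl_{\R^{m+r}}(\Gamma)\setminus\Gamma$, and then applies the Curve Selection Lemma in $\R^{m+r}$ to get a semialgebraic path $(\alpha,\mu)$ in $\Gamma$ converging to $(p,a)$. This directly yields $\gtm_\alpha^*\in\widetilde{\partial}M$ with $\widehat{f}(\gtm_\alpha^*)=a=\widehat{f}(\gtm^*)$, with no need to pass through formal paths or any approximation argument.
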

\begin{proof} 
(i) As $M$ is bounded, $\cl_{\R^m}(M)$ is a semialgebraic compactification of $M$. Thus, there exists by \cite[4.6]{fg5} a surjective continuous map $\rho:\betaa M\to\cl_{\R^m}(M)$ that is the identity on $M$. Fix $\gtm^*\in\partial M$ and observe that by \cite[4.3(i)]{fg5} $p:=\rho(\gtm^*)\in\cl_{\R^m}(M)\setminus M$. Consider the proper map $\Psi:=(\rho,\widehat{f}):\betaa M\to\R^{m+r}$ where we abbreviate $f:=(f_1,\ldots,f_r)$ and $\widehat{f}:=(\widehat{f_1},\ldots,\widehat{f_r})$ (see \ref{cocr} for the definition of $\widehat{f_i}$). Clearly, $\Psi(M)$ is the graph $\Gamma$ of $f$ and since $\Psi$ is proper, 
$$
\im\Psi=\Psi(\cl_{\betaa M}(M))=\cl_{\R^{m+r}}(\Psi(M))=\cl_{\R^{m+r}}(\Gamma).
$$
Again by \cite[4.3(i)]{fg5} $q:=\Psi(\gtm^*)=(\rho(\gtm^*),\widehat{f}(\gtm^*))=(p,a)\in\cl_{\R^{n+r}}(\Gamma)\setminus\Gamma\subset\R^m\times\R^r$.

By the Curve Selection Lemma \cite[2.5.5]{bcr}, there are semialgebraic paths $\alpha:[0,1]\to\R^m$ and $\mu:[0,1]\to\R^r$ such that $\alpha((0,1])\subset M$, $\mu|_{(0,\,1]}=(f\circ\alpha)|_{(0,\,1]}$ and $(\alpha(0),\mu(0))=q$. Hence,
$$
\widehat{f}(\gtm^*)=a=\mu(0)=\lim_{t\to0^+}\mu(t)=\lim_{t\to0^+}(f\circ\alpha)(t)=\widehat{f}(\gtm_\alpha^*)
$$
where $\gtm_\alpha^*\in \widetilde{\partial}M$ because $\lim_{t\to0^+}\alpha(t)=p\not\in M$. 

(ii) Observe that $f\in{\mathcal S}^*(M)$ is a unit if and only if $0\notin\widehat{f}(\betaa M)=f(M)\cup\widehat{f}(\partial M)=f(M)\cup\widehat{f}(\widetilde{\partial}M)$ (see assertion (i)), which proves the statement.

(iii) We have to check that for every $f\in {\mathcal S}^*(M)$ such that $\di_{\betaa M}(f)\not \subset M$ the intersection $\di_{\betaa M}(f)\cap\widetilde{\partial}M$ is non empty. Otherwise, $\widetilde{\partial}M\subset\ceros_{\betaa M}(f)$ and by part (i) we obtain $\{0\}=\widehat{f}(\widetilde{\partial}M)=\widehat{f}(\partial M)$ or equivalently $\di_{\betaa M}(f)\subset M$, which is a contradiction.
\end{proof}

A nice consequence of the previous result is the following characterization of the properness of a surjective semialgebraic map. Similar results can be found in \cite[2.1, 2.2]{b}.

\begin{cor}\label{propern}
A surjective semialgebraic map $g:N\to M$ is proper if and only if $\betaa\,g(\partial N)=\partial M$.
\end{cor}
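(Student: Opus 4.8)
The plan is to exploit the extension $\betaa\,g:\betaa N\to\betaa M$ of $g$ (see \ref{closedbeta1}), which is a continuous surjection between compact spaces since $g$ is surjective; the point is to relate the properness of $g$ to the behaviour of $\betaa\,g$ on the remainders. Recall that $g:N\to M$ is proper if and only if for every convergent sequence (or path) in $M$ whose limit lies in $M$, the preimages stay in a compact part of $N$; equivalently, $g$ is proper if and only if $g$ does not ``lose mass to infinity'' over points of $M$. The clean way to phrase this via Stone--\v Cech compactifications is: $g$ is proper $\iff$ $(\betaa\,g)^{-1}(M)\subset N$, i.e. no point of the remainder $\partial N$ maps into $M$ under $\betaa\,g$. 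Since $g$ is surjective, $\betaa\,g$ is surjective, so $\betaa\,g(N)=M$ always (as $g(N)=M$), and hence $\betaa\,g(\partial N)\supset\partial M$ would fail only if some point of $\partial N$ maps into $M$; combined, $(\betaa\,g)^{-1}(M)\subset N$ is equivalent to $\betaa\,g(\partial N)\subset\partial M$, which together with surjectivity of $\betaa\,g$ gives $\betaa\,g(\partial N)=\partial M$. So the equivalence to prove reduces to: $g$ proper $\iff$ $(\betaa\,g)^{-1}(M)=N$.

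First I would prove the easy direction: if $g$ is proper, then $\betaa\,g(\partial N)\subset\partial M$. Suppose not; then there is $\gtn^*\in\partial N$ with $\gtm^*:=\betaa\,g(\gtn^*)\in M$, say $\gtm^*=\gtm_p^*$ with $p\in M$. Using Lemma \ref{deltam}(i) (density of $\widetilde\partial N$ in $\partial N$ and the fact that tuples of bounded functions have the same image on $\widetilde\partial N$ as on $\partial N$), together with the Curve Selection Lemma \cite[2.5.5]{bcr}, I would produce a semialgebraic path $\alpha\in N_{F_0}$ with $\alpha(0)\notin N$ but $\lim_{t\to0^+}(g\circ\alpha)(t)=p\in M$; concretely, apply the argument of the proof of Lemma \ref{deltam}(i) to the bounded semialgebraic functions $\pi_1\circ g,\ldots,\pi_m\circ g$ on $N$ (assuming $M$ bounded, which we may by \ref{zr}) to replace $\gtn^*$ by a point of $\widetilde\partial N$ with the same image under $\betaa\,g$. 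Then $g\circ\alpha:(0,1]\to M$ is a semialgebraic path lying in $M$, extending continuously to $0$ with value $p\in M$, while $\alpha$ itself leaves every compact subset of $N$ (because $\alpha(0)\in\cl_{\R^n}(N)\setminus N$, so $\|\alpha(t)\|$ or the distance of $\alpha(t)$ to the ``boundary'' $\cl_{\R^n}(N)\setminus N$ is not bounded away from the bad set). This contradicts properness of $g$: the compact set $\{g\circ\alpha(t):t\in[0,1]\}\subset M$ (it is compact since $p\in M$) has preimage under $g$ not contained in any compact subset of $N$. Hence $\betaa\,g(\partial N)\subset\partial M$, and surjectivity gives equality.

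For the converse, assume $\betaa\,g(\partial N)=\partial M$, equivalently $(\betaa\,g)^{-1}(M)\subset N$, and I would show $g$ is proper. Let $K\subset M$ be compact; I must show $g^{-1}(K)$ is compact, i.e. closed and bounded in $\R^n$ (we may assume $N$ bounded by \ref{zr}, so boundedness is automatic and only closedness in $\R^n$, equivalently compactness, is at issue — more precisely one shows $\cl_{\R^n}(g^{-1}(K))\subset N$). Consider $\cl_{\betaa N}(g^{-1}(K))$: it is a compact subset of $\betaa N$, and $\betaa\,g$ maps it into $\cl_{\betaa M}(K)=K$ (the last equality because $K$ is already compact, hence closed in $\betaa M$ and equal to its own closure). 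Therefore $\cl_{\betaa N}(g^{-1}(K))\subset(\betaa\,g)^{-1}(K)\subset(\betaa\,g)^{-1}(M)\subset N$ by hypothesis. So every point of $\cl_{\betaa N}(g^{-1}(K))$ lies in $N$; since $N$ carries the Euclidean topology as a subspace of $\betaa N$, this means $\cl_{\betaa N}(g^{-1}(K))=\cl_{\R^n}(g^{-1}(K))\subset N$, and being a closed bounded subset of $\R^n$ it is compact. Thus $g^{-1}(K)=\cl_{\R^n}(g^{-1}(K))$ (it is also closed in $N$, hence equals its closure inside the compact set just described intersected with $N$... one checks $g^{-1}(K)$ is closed in $N$ since $g$ is continuous and $K$ closed in $M$, and a closed subset of $N$ contained in a compact subset of $N$ is compact) is compact, proving $g$ proper.

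The main obstacle is the first (``only if'') direction: one must carefully convert the abstract hypothesis ``$\betaa\,g(\partial N)$ contains a point of $M$'' into a concrete semialgebraic path in $N$ escaping to the remainder over a point of $M$, and this is exactly where Lemma \ref{deltam}(i) and the Curve Selection Lemma do the work — the delicate point being that one needs the path to witness simultaneously that $\alpha(0)\notin N$ and that $g\circ\alpha$ converges in $M$, which is achieved by applying the curve-selection argument to the graph of $g$ inside $\cl_{\R^n}(N)\times\cl_{\R^m}(M)$, just as in the proof of Lemma \ref{deltam}(i) but with the map $g$ in place of a tuple of functions on $M$.
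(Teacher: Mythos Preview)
Your proof is correct and follows essentially the same line as the paper's. Both directions hinge on the same reformulation $\betaa\,g(\partial N)=\partial M\iff(\betaa\,g)^{-1}(M)=N$, and for the implication ``$g$ proper $\Rightarrow\betaa\,g(\partial N)\subset\partial M$'' you invoke Lemma~\ref{deltam}(i) to replace an arbitrary remainder point by one coming from a semialgebraic path, exactly as the paper does.

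The one stylistic difference is in the converse. The paper dispatches it in one line: once $(\betaa\,g)^{-1}(M)=N$, the map $g$ is the restriction of the proper map $\betaa\,g$ to the full preimage of $M$, hence proper. You instead verify properness by hand, showing $\cl_{\betaa N}(g^{-1}(K))\subset(\betaa\,g)^{-1}(K)\subset N$ for each compact $K\subset M$. This is perfectly valid and is really just unpacking the general topological fact the paper quotes; the paper's formulation is shorter, yours is more self-contained. A minor remark: in your opening paragraph the sentence ``$\betaa\,g(\partial N)\supset\partial M$ would fail only if some point of $\partial N$ maps into $M$'' is not quite right---in fact $\partial M\subset\betaa\,g(\partial N)$ \emph{always} holds (if $\gtn^*\in N$ then $\betaa\,g(\gtn^*)=g(\gtn^*)\in M$, so preimages of points of $\partial M$ must lie in $\partial N$), which is why only the inclusion $\betaa\,g(\partial N)\subset\partial M$ needs work. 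This does not affect the rest of your argument.
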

\begin{proof}
Note first that $\betaa\,g:\betaa N\to\betaa M$ is a proper map since it is continuous and both $\betaa N$ and $\betaa M$ are compact and Hausdorff spaces; in particular, since $g$ is surjective and $M$ is dense in $\betaa M$, also $\betaa\,g$ is surjective. We assume again that $M$ is bounded. 

If $\betaa\,g(\partial N)=\partial M$, we deduce that $g=\betaa\,g|_{N}:N=(\betaa\,g)^{-1}(M)\to M$ is also proper. Conversely, assume that $g$ is proper. Since $g$ and $\betaa\,g$ are surjective, we obtain $\partial M\subset\betaa\,g(\partial N)$. To prove the other inclusion, we proceed as follows.

Denote $\widehat{g}:=(\widehat{g_1},\ldots,\widehat{g_m}):\betaa N\to\R^m$ where $\widehat{g_i}$ is the (unique continuous) extension of the component $g_i$ of $g$ to $\betaa N$. Suppose by contradiction that there exists a point $\gtn^*\in\partial N$ such that $p:=\betaa\,g(\gtn^*)\in M$. By Lemma \ref{deltam}(i) there exists $\gtn^*_\alpha\in\widetilde{\partial}N$ such that $\widehat{g}(\gtn^*_{\alpha})=p$; hence, $\betaa\,g(\gtn^*_\alpha)=\lim_{t\to0^+}(g\circ\alpha)(t)=\widehat{g}(\gtn^*_{\alpha})=p\in M$. Since $g$ is proper, $\alpha(0)=\lim_{t\to0^+}\alpha(t)$ belongs to $N$, which contradicts the fact that $\gtn^*_\alpha\in\widetilde{\partial}N$.
\end{proof}

\subsection{Points of the maximal spectrum with a countable basis of neighborhoods}\label{furapp}

A reasonable strategy to decide if $\betaa M$ determines the topological type of $M\setminus\eta(M)$ consists of searching a topological condition to distinguish the points of $M\setminus\eta(M)$ among those of $\betaa M$. In \cite[9.6-7]{gj} the authors prove that if $X$ is a topological space whose points are $G_{\delta}$, then $X$, viewed as a subset of its Stone--\v{C}ech compactification $\beta X$, is the set of $G_{\delta}$--points of $\beta X$. We show next that all points of a semialgebraic set $M$ have a countable basis of neighborhoods in $\betaa M$. Of course, this is trivially true for the points of $M_{\lc}$ as $M_{\lc}$ is open in $\betaa M$. However, the points of $\rho_1(M)$ require a more careful analysis.

\begin{lem}\label{fc}
Let $f\in{\mathcal S^*(M)}$ and $\widehat{f}:\betaa M\to\R$ be its unique continuous extension to $\betaa M$. Let $\gtm\in\betaa M$ be such that $c:=\widehat{f}(\gtm)>0$. Then $\cl_{\betaa M}(f^{-1}((\tfrac{c}{2},+\infty)))=\cl_{\betaa M}(\widehat{f}^{-1}(\tfrac{c}{2},+\infty))$ is a closed neighborhood of $\gtm$ in $\betaa M$.
\end{lem}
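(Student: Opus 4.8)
The plan is to verify two things: first, that the two sets whose closures appear in the statement have the same closure in $\betaa M$ (they coincide after intersecting with $M$, and the larger one is open, so $M$ fills it densely); and second, that this common closure is a neighborhood of $\gtm$ because it contains an open set to which $\gtm$ belongs. Being a closure, it is automatically closed, so once it is shown to be a neighborhood of $\gtm$ we are done.

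First I would record that $\widehat f:\betaa M\to\R$ is continuous (see \ref{cocr}), so $U:=\widehat f^{-1}((\tfrac{c}{2},+\infty))$ is open in $\betaa M$, and that $\widehat f|_M=f$, whence $U\cap M=\{x\in M:\,f(x)>\tfrac{c}{2}\}=f^{-1}((\tfrac{c}{2},+\infty))$. Next I would observe that, $M$ being dense in $\betaa M$ and $U$ being open, the subset $U\cap M$ is dense in $U$: for every nonempty open $V\subset U$ the set $V$ is a nonempty open subset of $\betaa M$, hence $V\cap M\neq\varnothing$. Consequently
$$
\cl_{\betaa M}(f^{-1}((\tfrac{c}{2},+\infty)))=\cl_{\betaa M}(U\cap M)=\cl_{\betaa M}(U)=\cl_{\betaa M}(\widehat f^{-1}((\tfrac{c}{2},+\infty))),
$$
which is the claimed equality.

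Finally, since $\widehat f(\gtm)=c>\tfrac{c}{2}$ we have $\gtm\in U$, and $U$ is open with $U\subset\cl_{\betaa M}(U)$; thus $\cl_{\betaa M}(U)$ contains the open neighborhood $U$ of $\gtm$, so it is a neighborhood of $\gtm$, and being a closure it is closed, i.e.\ a closed neighborhood of $\gtm$, as asserted. I do not expect any genuine obstacle here: the only point worth isolating is that density of $M$ in $\betaa M$ propagates to density of $U\cap M$ in the open set $U$, which is precisely what allows replacing the ``semialgebraic'' sublevel set $f^{-1}((\tfrac{c}{2},+\infty))$ by the ``spectral'' one $\widehat f^{-1}((\tfrac{c}{2},+\infty))$ without altering the closure; the threshold $\tfrac{c}{2}$ (any value strictly between $0$ and $c$ would do) is used only to guarantee that $\gtm$ lands in the \emph{open} set $U$ rather than merely in its closure.
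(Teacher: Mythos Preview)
Your proof is correct and follows essentially the same approach as the paper's. Both arguments hinge on the single observation that, since $M$ is dense in $\betaa M$ and $U:=\widehat f^{-1}((\tfrac{c}{2},+\infty))$ is open, the set $U\cap M=f^{-1}((\tfrac{c}{2},+\infty))$ is dense in $U$; the paper verifies this point-by-point while you invoke it as a general topological fact, but the content is identical.
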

\begin{proof}
It is enough to check that $\widehat{f}^{-1}((\tfrac{c}{2},+\infty))\subset\cl_{\betaa M}(f^{-1}(\tfrac{c}{2},+\infty))$. Fix a point $\gtn\in\widehat{f}^{-1}((\tfrac{c}{2},+\infty))$ and let $V$ be a neighborhood of $\gtn$ in $\betaa M$. Then $V\cap\widehat{f}^{-1}((\tfrac{c}{2},+\infty))$ is also a neighborhood of $\gtn$ in $\betaa M$. Since $M$ is dense in $\betaa M$, the intersection
$$
V\cap\widehat{f}^{-1}((\tfrac{c}{2},+\infty))\cap M=V\cap f^{-1}((\tfrac{c}{2},+\infty))\neq\varnothing.
$$
Thus, $\gtn\in\cl_{\betaa M}(f^{-1}((\tfrac{c}{2},+\infty)))$, as wanted.
\end{proof}

\begin{prop}\label{cn}
Let $p\in M$ and $\{U_k\}_k$ be a countable basis of neighborhoods of $p$ in $M$. Then $\{\cl_{\betaa M}(U_k)\}_k$ is a countable basis of neighborhoods of $p$ in $\betaa M$.
\end{prop}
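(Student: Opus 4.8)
The plan is to verify directly the two requirements defining a basis of neighbourhoods of $p$ in $\betaa M$: first that each $\cl_{\betaa M}(U_k)$ is itself a neighbourhood of $p$, and then that every neighbourhood of $p$ in $\betaa M$ contains some $\cl_{\betaa M}(U_k)$. Two elementary remarks are used throughout. Since $M$ is bounded (as assumed in this section), each coordinate projection $\pi_i:M\to\R$ lies in ${\mathcal S}^*(M)$, hence so does $f_\veps:=2\veps^2-\sum_{i=1}^m(\pi_i-p_i)^2$ for every $\veps>0$; by \ref{cocr} we have $\widehat{f_\veps}(\gtm_p^*)=f_\veps(p)=2\veps^2>0$, and a direct computation shows $f_\veps^{-1}((\veps^2,+\infty))=\Bb(p,\veps)\cap M$. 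Moreover, since $\{U_k\}_k$ is a basis of neighbourhoods of $p$ in $M$, it interlaces with the Euclidean balls centred at $p$: every $U_k$ contains some $\Bb(p,\veps_k)\cap M$, and every $\Bb(p,\veps)\cap M$ contains some $U_k$.

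For the first requirement I would fix $k$, choose $\veps_k>0$ with $\Bb(p,\veps_k)\cap M\subset U_k$, and apply Lemma \ref{fc} to the function $f_{\veps_k}$ at the point $\gtm_p^*$, for which $c:=\widehat{f_{\veps_k}}(\gtm_p^*)=2\veps_k^2$ and $c/2=\veps_k^2$. Lemma \ref{fc} then gives that $\cl_{\betaa M}(f_{\veps_k}^{-1}((\veps_k^2,+\infty)))=\cl_{\betaa M}(\Bb(p,\veps_k)\cap M)$ is a closed neighbourhood of $p$ in $\betaa M$; being contained in $\cl_{\betaa M}(U_k)$, it forces $\cl_{\betaa M}(U_k)$ to be a neighbourhood of $p$ as well.

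For the second requirement, given a neighbourhood $W$ of $p$ in $\betaa M$, I would use that the sets $\di_{\betaa M}(g)$ with $g\in{\mathcal S}^*(M)$ form a basis of open sets of $\betaa M$ to pick $g$ with $p\in\di_{\betaa M}(g)\subset W$. Since $p\in\di_{\betaa M}(g)$ means $g\notin\gtm_p^*$, that is, $g(p)\neq0$, and since $\di_{\betaa M}(g)=\di_{\betaa M}(g^2)$, we may replace $g$ by $g^2$ and assume $c:=g(p)>0$. Continuity of $g$ on $M$ provides $\delta>0$ with $g>\tfrac{c}{2}$ on $\Bb(p,\delta)\cap M$, and by the interlacing there is an index $k$ with $U_k\subset\Bb(p,\delta)\cap M$, so $\widehat{g}\geq\tfrac{c}{2}$ on $U_k$. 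As $\{\gtn\in\betaa M:\widehat{g}(\gtn)\geq\tfrac{c}{2}\}$ is closed in $\betaa M$ by continuity of $\widehat{g}$, it follows that $\widehat{g}\geq\tfrac{c}{2}$ on $\cl_{\betaa M}(U_k)$; since $\tfrac{c}{2}>0$ this yields $\cl_{\betaa M}(U_k)\subset\di_{\betaa M}(g)\subset W$, completing the verification.

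The only point requiring care — and it is precisely the phenomenon already packaged in Lemma \ref{fc} — is the passage from a Euclidean neighbourhood of $p$ in $M$ to its closure in $\betaa M$: a priori that closure could contain points of the remainder $\partial M$ at which the continuous extension $\widehat{g}$ has fallen to $0$, which would destroy the inclusion $\cl_{\betaa M}(U_k)\subset\di_{\betaa M}(g)$. It is the continuity of $\widehat{g}$, forcing $\widehat{g}\geq\tfrac{c}{2}>0$ on the closure in question, that rules this out. Apart from this, the argument is just a combination of the continuity of the extensions $\widehat{f}$, the density of $M$ in $\betaa M$ and the boundedness of $M$; in contrast with the remainder points studied elsewhere in this section, it needs no induction on bricks nor any finer analysis of $\partial M$.
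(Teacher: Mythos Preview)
Your proof is correct and follows essentially the same approach as the paper's: both parts rely on Lemma \ref{fc} and the continuity of the extension $\widehat{g}$, and the argument for $\cl_{\betaa M}(U_k)\subset W$ via $\widehat{g}\geq\tfrac{c}{2}$ on the closure is exactly the paper's reasoning. The only cosmetic difference is that, to show each $\cl_{\betaa M}(U_k)$ is a neighbourhood of $p$, the paper invokes an abstract $g\in{\mathcal S}^*(M)$ obtained from the subspace relation $U_k=W_k\cap M$, whereas you use the explicit coordinate function $f_{\veps_k}$ (legitimate since $M$ is assumed bounded in this section); both feed into Lemma \ref{fc} in the same way.
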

\begin{proof}
Let $W$ be an open neighborhood of $p$ in $\betaa M$. Then there exists a bounded semialgebraic function $f\in{\mathcal S}^*(M)$ such that $p\in{\mathcal D}_{\betaa M}(f)\subset W$. Let $\widehat{f}:\betaa M\to\R$ be the unique continuous extension of $f$ to $\betaa M$. We may assume $\widehat{f}(p)=c>0$ and observe that $f^{-1}((\tfrac{c}{2},+\infty))$ is an open neighborhood of $p$ in $M$. Thus, there exists $k\geq 1$ such that $p\in U_k\subset f^{-1}((\tfrac{c}{2},+\infty))$. Therefore
$$
\cl_{\betaa M}(U_k)\subset\cl_{\betaa M}(f^{-1}((\tfrac{c}{2},+\infty)))\subset\cl_{\betaa M}(\widehat{f}^{-1}((\tfrac{c}{2},+\infty)))\subset\widehat{f}^{-1}([\tfrac{c}{2},+\infty))\subset W.
$$

To finish, let us see that each set $\cl_{\betaa M}(U_k)$ is a neighborhood of $p$ in $\betaa M$. Let $W_k$ be a neighborhood of $p$ in $\betaa M$ such that $U_k=W_k\cap M$. Let $g\in{\mathcal S}^*(M)$ be such that $p\in{\mathcal D}_{\betaa M}(g)\subset W_k$. Then $p\in D_M(g)\subset U_k$. We may assume $r=g(p)>0$ and so $p\in\widehat{g}^{-1}((r/2,+\infty))\subset W_k$. Thus, $p\in g^{-1}((r/2,+\infty))\subset W_k\cap M=U_k $ and by Lemma \ref{fc} 
$$
p\in\widehat{g}^{-1}((r/2,+\infty))\subset\cl_{\betaa M}(\widehat{g}^{-1}((r/2,+\infty)))=\cl_{\betaa M}(g^{-1}((r/2,+\infty)))\subset\cl_{\betaa M}(U_k).
$$
Hence, $\cl_{\betaa M}(U_k)$ is a neighborhood of $p$ in $\betaa M$, as wanted.
\end{proof}

We prove next that there also exist a lot of points in $\betaa M\setminus M$, which have a countable basis of neighborhoods in $\betaa M$; hence, the strategy for rings of continuous functions employed in \cite[9.6-7]{gj} to distinguish the points of $M$ among those of $\betaa M$ becomes useless in our context.

\begin{prop}\label{gpn3}
Each point of $\widehat{\partial}M$ has a countable basis of neighborhoods in $\betaa M$.
\end{prop}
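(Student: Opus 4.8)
The plan is to produce a \emph{countable} family $\{V_n\}_{n\ge 1}$ of semialgebraic subsets of $M$ such that $\{\cl_{\betaa M}(V_n)\}_n$ is a basis of (closed) neighbourhoods of $\gtm_\alpha^*$ in $\betaa M$. Here $\alpha=(\alpha_1,\dots,\alpha_m)\in M_{F_1}$ is the formal path with $\gtm_\alpha^*\in\widehat{\partial}M$; as $\gtm_\alpha^*$ is free, $p:=\alpha(0)\in\cl_{\R^m}(M)\setminus M$ by Corollary \ref{compact}(i). Concretely I must arrange: \textbf{(a)} $\gtm_\alpha^*\in\cl_{\betaa M}(V_n)$ for all $n$; \textbf{(b)} each $\cl_{\betaa M}(V_n)$ is a neighbourhood of $\gtm_\alpha^*$; \textbf{(c)} every open neighbourhood of $\gtm_\alpha^*$ contains some $\cl_{\betaa M}(V_n)$.

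For the $V_n$ I would take shrinking ``tubes around the truncations of $\alpha$''. Write $\alpha=\sum_{j\ge0}c_j\t^j$ with $c_j\in\R^m$, let $\mu_k:=\sum_{j=0}^kc_j\t^j\in\R[\t]^m$ and, as in the proof of Corollary \ref{compact}(ii), put $h_k(t,s):=\mu_k(t)+t^{k+1}s$, so $h_k(t,c_{k+1})=\mu_{k+1}(t)$. For $k,l,\lambda\ge1$ (with $\veps_k>0$ chosen, via Lemma \ref{positivity}, so small that the tubes behave near $p$) set $\veps_{k,\lambda}:=\min(\veps_k,1/\lambda)$ and
$$
V_{k,l,\lambda}:=h_k\big([0,\veps_{k,\lambda}]\times\ol{\Bb}(c_{k+1},\tfrac1l)\big)\cap M=\big\{x\in M:\exists\,t\in(0,\veps_{k,\lambda}],\ \|x-\mu_{k+1}(t)\|\le t^{k+1}/l\big\}.
$$
This is a countable family of semialgebraic subsets of $M$, each closed in $M$. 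Moreover $\alpha\in(V_{k,l,\lambda})_{F_1}$: feeding $t=\t$ and $s=(\alpha(\t)-\mu_k(\t))/\t^{k+1}\in\R[[\t]]^m$ (which has value $c_{k+1}$ at $0$, so $\|s-c_{k+1}\|$ is infinitesimal $<1/l$ while $\t<\veps_{k,\lambda}$) into $h_k$ recovers $\alpha$. Hence, since for $g\in{\mathcal S}^*(M)$ the statement ``$g\equiv0$ on $V_{k,l,\lambda}$'' transfers to $F_1$, such a $g$ satisfies $\psi_\alpha(g)=g_{F_1}(\alpha)=0$, i.e. $g\in\gtm_\alpha^*$; therefore $\gtm_\alpha^*\in\cl_{\betaa M}(V_{k,l,\lambda})$, which is (a).

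To get (b) I would show $\gtm_\alpha^*\notin\cl_{\betaa M}(M\setminus V_{k,l,\lambda})$; since $\betaa M=\cl_{\betaa M}(V_{k,l,\lambda})\cup\cl_{\betaa M}(M\setminus V_{k,l,\lambda})$ this suffices. Equivalently one needs $g\in{\mathcal S}^*(M)$ with $D_M(g)\subseteq V_{k,l,\lambda}$ and $\psi_\alpha(g)(0)\ne0$, for then $\gtm_\alpha^*\in{\mathcal D}_{\betaa M}(g)\subseteq\cl_{\betaa M}(D_M(g))\subseteq\cl_{\betaa M}(V_{k,l,\lambda})$ (using that ${\mathcal D}_{\betaa M}(g)$ is open and $M$ dense in $\betaa M$). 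Such a $g$ is obtained by transporting a weight through $h_k$: on $D:=(0,\veps_{k,\lambda}]\times\ol{\Bb}(c_{k+1},\tfrac1l)$ put $\phi(t,s):=\big(\tfrac1{l^2}-\|s-c_{k+1}\|^2\big)(\veps_{k,\lambda}-t)\ge0$ and define $g(x):=\sup\{\phi(t,s):(t,s)\in D,\ h_k(t,s)=x\}$ for $x\in h_k(D)\cap M$, and $g(x):=0$ elsewhere on $M$. The key geometric input is that $h_k$ is a \emph{submersion} at each $(t,s)$ with $t>0$ (its $s$-differential is $t^{k+1}$ times the identity), so $h_k$ maps the open box onto an open subset of $\R^m$ contained in $h_k(D)$; consequently the Euclidean boundary of $h_k(D)$ is reached only at points of $D$ where $\phi=0$ or $t=0$, and the face $t=0$ is sent to $p\notin M$. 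A routine semialgebraic continuity/compactness argument then yields $g\in{\mathcal S}^*(M)$, with $D_M(g)\subseteq h_k(D)\cap M\subseteq V_{k,l,\lambda}$ and, evaluating the transferred supremum along $\alpha$ (whose $h_k$-fibre contains $(\t,(\alpha-\mu_k)/\t^{k+1})$), $\psi_\alpha(g)(0)=\veps_{k,\lambda}/l^2>0$. I expect this to be the main obstacle: unlike a point of $M$ (cf.\ Proposition \ref{cn}), the tube $V_{k,l,\lambda}$ pinches to $p$ faster than linearly, so it is \emph{not} separated from its complement in $M$ by a band of positive width, and the weight $\phi$ — adapted to the scaling of $h_k$ — together with the submersion property is what makes a separating function exist.

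Finally, for (c): given an open neighbourhood $W$ of $\gtm_\alpha^*$, pick $f\in{\mathcal S}^*(M)$ with $\gtm_\alpha^*\in{\mathcal D}_{\betaa M}(f)\subseteq W$ and, replacing $f$ by $f^2$, assume $c:=\psi_\alpha(f)(0)>0$. The set $C:=\{x\in M:f(x)\le c/2\}$ is closed in $M$; since $\psi_\alpha(f)-c$ is infinitesimal we have $\psi_\alpha(f)>c/2$, so $\alpha\notin C_{F_1}$, and as $\cl_{\R^m}(C)\cap M=C$ also $\alpha\notin(\cl_{\R^m}C)_{F_1}$, whence $\dist(\alpha(\t),(\cl_{\R^m}C)_{F_1})=b\t^q+\cdots$ with $b>0$ and $q\in\Q$, $q\ge0$. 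Choosing $k>q$ — this is where the truncation degree must be a \emph{free} index of the family — the order of $\|\alpha-\mu_{k+1}\|$ exceeds $q$, so the germ at $0^+$ of the semialgebraic function $t\mapsto\dist(\mu_{k+1}(t),\cl_{\R^m}C)$ again has leading term $bt^q$; combined with $\|x-\mu_{k+1}(t)\|\le t^{k+1}/l$ on the tube this forces $\dist(x,\cl_{\R^m}C)\ge\tfrac b2t^q-t^{k+1}/l>0$ for every $x\in V_{k,l,\lambda}$ as soon as $\lambda$ is large. Hence $V_{k,l,\lambda}\cap C=\varnothing$, i.e.\ $V_{k,l,\lambda}\subseteq f^{-1}((c/2,+\infty))$, and by Lemma \ref{fc}
$$
\cl_{\betaa M}(V_{k,l,\lambda})\subseteq\cl_{\betaa M}\big(f^{-1}((c/2,+\infty))\big)=\cl_{\betaa M}\big(\widehat{f}^{-1}((c/2,+\infty))\big)\subseteq\widehat{f}^{-1}\big([c/2,+\infty)\big)\subseteq{\mathcal D}_{\betaa M}(f)\subseteq W.
$$
This gives (c), so $\{\cl_{\betaa M}(V_{k,l,\lambda})\}_{k,l,\lambda}$ is the desired countable basis of neighbourhoods at $\gtm_\alpha^*$.
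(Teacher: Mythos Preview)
Your proof is correct and rests on the same core idea as the paper's: a countable basis of neighbourhoods is obtained from shrinking ``tubes'' around the polynomial truncations of $\alpha$. The execution, however, differs in one substantial way. The paper first performs a semialgebraic normalisation (embedding $\R^m\hookrightarrow\R^{m+1}$ via $x\mapsto(\|x\|^2,x)$ and a root change on the first coordinate) so that $M\subset\{x_1>0\}$, $\alpha(0)=0$ and $\alpha_1(\t)=\t$. After this reduction the tube can be written with $x_1$ itself as parameter, namely $\{x\in M:\ x_1<1/k,\ \|x-\gamma_\ell(x_1)\|<x_1^{\ell+1}\}$, and the paper takes directly the \emph{basic open} set $U_{\ell,k}:=\mathcal{D}_{\betaa M}(f_\ell+|f_\ell|,\,h_k+|h_k|)$ of $\betaa M$. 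This makes your step (b) trivial: $U_{\ell,k}$ is an open neighbourhood of anything it contains, so no separating function needs to be constructed. By skipping the normalisation you parametrise the tube through $h_k(t,s)=\mu_k(t)+t^{k+1}s$ instead of through a coordinate, and you then have to manufacture $g\in\mathcal{S}^*(M)$ with $D_M(g)\subset V_{k,l,\lambda}$ and $\psi_\alpha(g)(0)>0$ via a fibre-supremum; as you correctly isolate, the submersion property of $h_k$ on $\{t>0\}$ is exactly what makes $g$ continuous (upper semicontinuity is compactness of fibres, lower semicontinuity uses a local section of $h_k$ through a point where the supremum is attained). For step (c) the two arguments are different but equally short: the paper picks a basic semialgebraic cell $\{g_1>0,\dots,g_r>0\}\ni\alpha$ inside $\{f>c/2\}$ and invokes Lemma~\ref{positivity} to trap the tube there, while you compare orders in the Puiseux expansion of $\dist(\alpha(\t),\cl_{\R^m}C)$; both conclude via Lemma~\ref{fc}. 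In short, the paper trades a preliminary change of coordinates for a trivial step (b), whereas your more intrinsic route pays for that economy with the fibre-supremum construction.
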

\begin{proof}
Let $\alpha:=(\alpha_1,\ldots,\alpha_m)\in M_{F_1}$ be a formal path such that $\alpha(0)\in\cl_{\R^m}(M)\setminus M$. Our aim is to construct a countable basis of neighborhoods for $\gtm_\alpha^*$ in $\betaa M$. 

\vspace{1mm}\setcounter{substep}{0}
\begin{substeps}{gpn3}\label{piccolino1}
We may assume: \em $M\subset\{x_1>0\}$, $\alpha(0)=0$ and $\alpha_1(\t)=\t$\em. 
\end{substeps}

\vspace{1mm}
Indeed, after a change of coordinates in $\R^m$ we may assume $\alpha(0)=0$ and that $\alpha_1$ is not a constant. Considering the embedding of $\R^m$ in $\R^{m+1}$ given by 
$$
(x_1,\ldots,x_m)\mapsto(x_1^2+\cdots+x_m^2,x_1,\ldots,x_m)=(y_1,\ldots,y_{m+1}),
$$ 
we can suppose $M\subset\{y_1>0\}$. After reparametrizing $\alpha$, we assume $\alpha_1(\t)=\t^p$ for some integer $p\geq1$. This in combination with the new change of coordinates 
$$
h:(0,+\infty)\times\R^{m}\to(0,+\infty)\times\R^{m},\ (y_1,y_2,\ldots,y_{m+1})\mapsto(\sqrt[p]{y_1},y_2,\ldots,y_{m+1})
$$
allows us to suppose $\alpha_1(\t)=\t$.

\vspace{1mm}
\begin{substeps}{gpn3}\label{piccolino2}
For each integer $\ell\geq 1$ consider polynomials $\gamma_{2\ell},\ldots,\gamma_{m\ell}\in\R[\t]$ such that $\alpha_j-\gamma_{j\ell}\in(\t)^{\ell+2}\subset\R[[\t]]$ and let $L_\ell>0$ be such that $|\gamma_{j\ell}(\t)|<L_\ell$ for $|t|\leq1$ and $j=2,\ldots,m$. Denote $\gamma_\ell(\t):=(\t,\gamma_{2\ell}(\t),\ldots,\gamma_{m\ell}(\t))$ and consider the family of semialgebraic functions on $M$
$$
\begin{cases}
f_\ell(x):=x_1^{2\ell+2}-\|x-\gamma_\ell(x_1)\|^2=x_1^{2\ell+2}-\sum_{j=2}^m(x_j-\gamma_{j\ell}(x_1))^2,&\text{for}\ \ell\geq 1,\\[4pt] 
h_k(x):=\frac{1}{k^2}-x_1^2,&\text{for}\ k\geq 1
\end{cases}
$$
and the family of open subsets $U_{\ell,k}:={\mathcal D}_{\betaa M}(f_\ell+|f_\ell|,h_k+|h_k|)$ of $\betaa M$. Note that $\gtm_\alpha^*\in U_{\ell,k}$ for all $\ell,k\geq 1$. 
\end{substeps}

\vspace{1mm}
\begin{substeps}{gpn3}\label{piccolino20}
Our goal is to see: \em $\{U_{\ell,k}\}_{\ell,k}$ is a basis of neighborhoods of $\gtm_\alpha^*$ in $\betaa M$\em. 
\end{substeps}

\vspace{1mm}
Fix $g\in{\mathcal S}^*(M)$ such that $\gtm^*_\alpha\in{\mathcal D}_{\betaa M}(g)$ and assume $\widehat{g}(\gtm^*_\alpha)=c>0$. We write $V:=g^{-1}((\tfrac{c}{2},+\infty))$. Notice that $\alpha\in V_{F_1}$ and choose polynomials $g_1,\ldots,g_r\in\R[\x]$ such that $V_1:=\{g_1>0,\ldots,g_r>0\}$ satisfies $\alpha\in(V_1\cap M)_{F_1}\subset V_{F_1}$. Consider the new variables $\s$, $\y:=(\y_1,\ldots,\y_m)$ and $\z:=(\z_1,\ldots,\z_m)$, write $\x=\y+\s\z$ and 
$$
g_i(\x)=g_i(\y+\s\z)=g_i(\y)+\s H_i(\s,\y,\z)
$$
where $H_i(\s,\y,\z):=\sum_{j=1}^{s_i-1}h_{ij}(\y,\z)\s^j$ for some polynomials $h_{i1},\ldots,h_{is_i}\in\R[\y,\z]$. Let $C_\ell>0$ be a large enough real number such that 
$$
\text{$|H_i(s,y,z)|<C_\ell$\quad for $|s|\leq1,|z_j|\leq1,|y_1|\leq1,|y_2|\leq L_\ell,\ldots,|y_m|\leq L_\ell$},
$$
$j=1,\ldots,m$ and $i=1,\ldots,r$. 

\vspace{1mm}
\begin{substeps}{gpn3}\label{piccolo}
By (a slight modification of) Lemma \ref{positivity}: \em There exists $\ell_0\geq1$ such that for all $\ell\geq \ell_0$ every formal path $\eta\in\R[[\t]]^m$ with $\|\eta(\t)-\alpha(\t^p)\|^2\in(\t)^{2\ell p}$ for some $p\geq1$ satisfies $g_i(\eta(\t))>0$ for $i=1,\ldots,r$\em. In particular: \em If $\ell\geq\ell_0$, then $g_i(\gamma_{\ell}(t))>0$ for $t>0$ small enough\em. 
\end{substeps}

\vspace{1mm}
\begin{substeps}{gpn3}\label{piccolo2}
Denote $\ell:=1+\max\{\ell_0,\omega(g_i(\alpha(\t))):\ i=1,\ldots,r\}$ and choose $k_0\geq 1$ such that $g_i(\gamma_{\ell}(t))>0$ for $i=1,\ldots,r$ if $0<t<1/k_0$. Since $\ell>\omega(g_i(\alpha(\t)))$ for $\ i=1,\ldots,r$, there exists $k\geq k_0$ such that $
g_i(\gamma_{\ell}(t))-t^{\ell+1}C_\ell>0$ for $0<t\leq1/k$ and $i=1,\ldots,r$. 
\end{substeps}

\vspace{1mm}
\begin{substeps}{gpn3}\label{piccolo3}
For our purposes it is enough to check: $U_{\ell,k}\subset\widehat{g}^{-1}([\tfrac{c}{2},+\infty))$. 
\end{substeps}

\vspace{1mm}
Indeed, fix a point $x\in U_{\ell,k}\cap M$. Then $0<x_1<1/k$ and $\sum_{j=2}^m(x_j-\gamma_{j\ell}(x_1))^2 < x_1^{2\ell+2}$. Thus, $|x_j-\gamma_{j\ell}(x_1)|<x_1^{\ell+1}$ for $j=2,\ldots,m$ and so $x_j=\gamma_{j\ell}(x_1)+\rho_jx_1^{\ell+1}$ for some $\rho_j\in\R$ such that $|\rho_j|<1$. Write $\rho:=(0,\rho_2,\ldots,\rho_m)$ and observe that by \ref{gpn3}.\ref{piccolo2}
$$
g_i(x)=g_i(\gamma_{\ell}(x_1))+x_1^{\ell+1}H_i(\gamma_{\ell}(x_1)),\rho)\\
>g_i(\gamma_{\ell}(x_1))-x_1^{\ell+1}C_\ell>0;
$$
hence, $x\in\{g_1>0,\ldots,g_r>0\}\cap M\subset V\subset\widehat{g}^{-1}([\tfrac{c}{2},+\infty))$.

Now we check $U_{\ell,k}\cap\partial M\subset\widehat{g}^{-1}([\tfrac{c}{2},+\infty))$. Since $U_{\ell,k}$ is open in $\betaa M$ and $\widehat{\partial}M$ is dense in $\partial M$ (see Lemma \ref{deltam}), it is sufficient to show that $U_{\ell,k}\cap\widehat{\partial}M$ is contained in $\widehat{g}^{-1}([\tfrac{c}{2},+\infty))$. To that end it is enough to prove that $\mu\in\{g_1>0,\ldots,g_r>0\}_{F_1}$ for all formal paths $\mu\in (U_{\ell,k}\cap M)_{F_1}$. Indeed, $\mu_1(\t)>0$ because $\mu\in M_{F_1}$ and after reparametrizing we may assume $\mu_1(\t)=\t^p$ for some $p\geq 1$. Since $\mu\in(U_{\ell,k})_{F_1}$, we get
$$
\|\mu(\t)-\gamma_\ell(\t)\|^2=(\mu_2(\t)-\gamma_{2\ell}(\t^p))^2+\cdots+(\mu_m(\t)-\gamma_{m\ell}(\t^p))^2<\t^{2(\ell+1)p}
$$
and so $\|\mu(\t)-\gamma_{\ell}(\t^p)\|^2\in(\t)^{2(\ell+1)p}$; hence, since $\|\alpha(\t)-\gamma_{\ell}(\t)\|^2\in(\t)^{2(\ell+1)}$, we deduce $\|\mu(\t)-\alpha(\t^p)\|^2\in(\t)^{2(\ell+1)p}$ and therefore by \ref{gpn3}.\ref{piccolo} $g_i(\mu(\t))>0$ for each index $i=1,\ldots,r$, that is, $\mu\in\{g_1>0,\ldots,g_r>0\}_{F_1}$.

We conclude $U_{\ell,k}=(U_{\ell,k}\cap M)\cup(U_{\ell,k}\cap\partial M)\subset\widehat{g}^{-1}([\tfrac{c}{2},+\infty))\subset{\mathcal D}_{\betaa M}(g)$, as required.\setcounter{substep}{0}
\end{proof}

We finish this section by proving that the zero set of the function $F:=\widehat{f}|_{\partial M}$ induced in $\partial M$ by a function in $f\in{\mathcal S}^*(M)$ is regularly closed. Namely,

\begin{cor}\label{neigh0}
Let $h\in{\mathcal S}^*(M)$, $\widehat{h}:\betaa M\to\R$ be its unique continuous extension and $H:=\widehat{h}|_{\partial M}:\partial M\to\R$. Then
\begin{itemize}
\item[(i)] The set $Z_{\partial M}(H)$ is a closed neighborhood of each free maximal ideal $\gtm_\alpha^*\in(\widehat{\partial}M\setminus\cl_{\betaa M}(\rho_1(M)))\cap Z_{\partial M}(H)$ in $\partial M$. 
\item[(ii)] $Z_{\partial M}(H)=\cl_{\partial M}\big(\Int_{\partial M}(Z_{\partial M}(H))\big)$.
\end{itemize}
\end{cor}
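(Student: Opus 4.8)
The plan is to establish (i) first and then deduce (ii) from it by a density argument.

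\emph{Part (i).} Since $H=\widehat h|_{\partial M}$ is continuous, $Z_{\partial M}(H)$ is closed in $\partial M$, so it suffices to produce an open neighbourhood of $\gtm_\alpha^*$ in $\partial M$ contained in $Z_{\partial M}(H)$. Applying the coordinate changes used in the proof of Proposition \ref{gpn3} (they induce isomorphisms of the rings ${\mathcal S}^*$ and carry $\widehat h$, $\betaa M$, $\partial M$ and $\rho_1(M)$ along) we may assume $M\subset\{x_1>0\}$, $\alpha(0)=0$ and $\alpha_1(\t)=\t$. As $h\in\gtm_\alpha^*$, the series $\psi_\alpha(h)\in\R[[\t^*]]$ has order $q:=\omega(\psi_\alpha(h))>0$ (with $q=+\infty$ if $\psi_\alpha(h)=0$, which only simplifies matters). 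Since $\gtm_\alpha^*\notin\cl_{\betaa M}(\rho_1(M))$, choose $u\in{\mathcal S}^*(M)$ with $u\geq0$, $u|_{\rho_1(M)}=0$ and $\widehat u(\gtm_\alpha^*)>0$; because $\{U_{\ell,k}\}_{\ell,k}$ is by Proposition \ref{gpn3} a basis of neighbourhoods of $\gtm_\alpha^*$ in $\betaa M$, fix $\ell_1,k_1$ with $\cl_{\betaa M}(U_{\ell_1,k_1})$ contained in the open set $\{\widehat u>0\}$. Then $U_{\ell_1,k_1}\cap M\subset M\setminus\rho_1(M)=M_{\lc}$ (Proposition \ref{rho}) and $\cl_{\betaa M}(U_{\ell_1,k_1})\cap\cl_{\betaa M}(\rho_1(M))=\varnothing$.

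The core of the proof is to show that for $\ell\geq\ell_1$ large and a suitable $k\geq k_1$ one has $U_{\ell,k}\cap\partial M\subset Z_{\partial M}(H)$; granting this, $U_{\ell,k}\cap\partial M$ is the desired open neighbourhood. Since $\widehat\partial M$ is dense in $\partial M$ (Lemma \ref{deltam}(iii)) and $U_{\ell,k}$ is open, it is enough to check $\widehat h(\gtm_\mu^*)=0$ for every formal path $\mu\in (U_{\ell,k}\cap M)_{F_1}$. Reparametrising so that $\mu_1(\t)=\t^p$, the inequalities defining $U_{\ell,k}$ together with $\|\alpha(\t)-\gamma_\ell(\t)\|^2\in(\t)^{2(\ell+1)}$ force $\|\mu(\t)-\alpha(\t^p)\|^2\in(\t)^{2p(\ell+1)}$, i.e. $\mu(\t)=\alpha(\t^p)+\t^{p(\ell+1)}\zeta(\t)$ with $\zeta$ having bounded coefficients. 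Now $U_{\ell_1,k_1}\cap M$ lies in a compact semialgebraic ``solid tube'' around the Nash curve $\gamma_\ell$ inside the locally closed (locally compact) set $M_{\lc}$, on which $h$ is semialgebraic; hence a \L ojasiewicz-type inequality — the analogue for $h$ of Lemma \ref{positivity} — gives, once $\ell$ exceeds a bound depending only on $q$ and the \L ojasiewicz exponent of $h$ on that piece, $\psi_\mu(h)=\psi_\alpha(h)(\t^p)+(\text{a series of order}>pq)$, so $\omega(\psi_\mu(h))=pq>0$ and $\widehat h(\gtm_\mu^*)=\psi_\mu(h)(0)=0$. The main obstacle of (i) is exactly this \L ojasiewicz control of the variation of $h$ along the perturbed paths up to the point $\alpha(0)\notin M$; the hypothesis $\gtm_\alpha^*\notin\cl_{\betaa M}(\rho_1(M))$ enters here, to confine the analysis to a genuinely locally compact portion of $M$.

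\emph{Part (ii).} The inclusion $\supseteq$ is immediate, as $\Int_{\partial M}(Z_{\partial M}(H))\subset Z_{\partial M}(H)$ and $Z_{\partial M}(H)$ is closed in $\partial M$. For $\subseteq$, set $A:=(\widehat\partial M\setminus\cl_{\betaa M}(\rho_1(M)))\cap Z_{\partial M}(H)$; by (i) one has $A\subset\Int_{\partial M}(Z_{\partial M}(H))$, so it suffices to show $A$ is dense in $Z_{\partial M}(H)$. Fix $\gtm^*\in Z_{\partial M}(H)$, so $\widehat h(\gtm^*)=0$, and a basic neighbourhood ${\mathcal D}_{\betaa M}(w)$ of $\gtm^*$, chosen (shrinking $w$) so that $\cl_{\betaa M}({\mathcal D}_{\betaa M}(w))$ lies in any prescribed neighbourhood of $\gtm^*$. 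The set $N:=D_M(w)\cap M_{\lc}$ is open semialgebraic in $M_{\lc}\subset M$, and since $M_{\lc}$ is dense in $\betaa M$ we get $\gtm^*\in\cl_{\betaa M}(N)$. Applying the Curve Selection argument from the proof of Lemma \ref{deltam}(i) to the proper map $(\rho,\widehat h):\betaa M\to\cl_{\R^m}(M)\times\R$ and the graph of $h|_N$ (note $(\rho,\widehat h)(\gtm^*)=(\rho(\gtm^*),0)$ lies in its closure but not on it, as $\rho(\gtm^*)\notin M$) yields a semialgebraic path $\mu$ with $\mu((0,1])\subset N\subset M_{\lc}$, $\mu(0)=\rho(\gtm^*)\notin M$ and $\lim_{t\to0^+}h(\mu(t))=0$; thus $h\in\gtm_\mu^*$, i.e. $\gtm_\mu^*\in Z_{\partial M}(H)$, and $\gtm_\mu^*\in\cl_{\betaa M}(N)$ lies in the prescribed neighbourhood. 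Finally $\gtm_\mu^*\in\widetilde\partial M\subset\widehat\partial M$ and $\gtm_\mu^*\notin\cl_{\betaa M}(\rho_1(M))$: as $\mu$ never meets $\rho_1(M)$, applying Proposition \ref{gpn3} to $\gtm_\mu^*$ produces, for $\ell$ and $k$ large, a neighbourhood $U_{\ell,k}$ of $\gtm_\mu^*$ whose trace on $M$ is a thin tube around $\mu$ staying inside $M_{\lc}$, with $\betaa M$-closure disjoint from $\cl_{\betaa M}(\rho_1(M))$. Hence $\gtm_\mu^*\in A$ meets the prescribed neighbourhood, $A$ is dense in $Z_{\partial M}(H)$, and $Z_{\partial M}(H)\subset\cl_{\partial M}(A)\subset\cl_{\partial M}(\Int_{\partial M}(Z_{\partial M}(H)))$. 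The delicate point of (ii) is again this last assertion — that a semialgebraic path germ inside $M_{\lc}$ defines a point of $\partial M$ off $\cl_{\betaa M}(\rho_1(M))$ — for which the explicit neighbourhood basis of Proposition \ref{gpn3} is indispensable, the ambient distance of $\mu$ to $\rho_1(M)$ being possibly infinitesimal.
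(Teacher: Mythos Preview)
Your argument for part~(i) has a genuine gap at the step you yourself flag as ``the main obstacle''. You invoke a ``\L ojasiewicz-type inequality --- the analogue for $h$ of Lemma~\ref{positivity}'' to conclude that $\psi_\mu(h)(0)=0$ for every formal path $\mu$ in a sufficiently thin tube around $\alpha$. But Lemma~\ref{positivity} is a statement about \emph{polynomials}: its proof is a Taylor expansion $P(\y+\s\z)=P(\y)+\s H(\s,\y,\z)$, which has no analogue for an arbitrary $h\in{\mathcal S}^*(M)$. A H\"older estimate $|h(x)-h(y)|\le C\|x-y\|^a$ would do the job, but the tube is not compact (it accumulates at $\alpha(0)\notin M$), so no such uniform bound is available from standard \L ojasiewicz. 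Concretely, for $M=(0,1]^2$, $\alpha(\t)=(\t,0)$ and $h(x,y)=y/(y+x^N)$ one has $\psi_\alpha(h)=0$ yet $\psi_\mu(h)(0)=1$ for $\mu(\t)=(\t,\t^\ell)$ whenever $\ell<N$; the exponent $N$ is invisible from $q=\omega(\psi_\alpha(h))$ and from any ``\L ojasiewicz exponent of $h$ on that piece'' that you could read off without already knowing (i). Finding the right $\ell_0$ is therefore equivalent to, not a step towards, the assertion that $Z_{\partial M}(H)$ is a neighbourhood of $\gtm_\alpha^*$.

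The paper avoids this entirely. It first reduces to the locally compact set $M_{\lc}$ via the map $\betaa\,{\tt j}:\betaa M_{\lc}\to\betaa M$, which is a homeomorphism off $\cl_{\betaa M}(\rho_1(M))$; this is exactly where the hypothesis $\gtm_\alpha^*\notin\cl_{\betaa M}(\rho_1(M))$ is used. On $M_{\lc}$ one then applies Corollary~\ref{ppp} (there is no prime strictly between $\gtm_\alpha\cap{\mathcal S}^*(M_{\lc})$ and $\gtm_\alpha^*$) together with \cite[6.1]{fe1}, an algebraic result about the remainder of a locally compact semialgebraic set, to conclude directly that $Z_{\partial M_{\lc}}(\widehat{h\circ{\tt j}}|_{\partial M_{\lc}})$ is a neighbourhood of the preimage of $\gtm_\alpha^*$. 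No analytic estimate on $h$ is needed.

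For part~(ii) your outline is close to the paper's, but the execution is heavier than necessary. To place the auxiliary point $\gtm_\alpha^*$ off $\cl_{\betaa M}(\rho_1(M))$ the paper simply chooses, via \cite[4.10]{fg5}, a function $b\in{\mathcal S}^*(M)$ with $\widehat b(\gtm^*)=1$ and $\widehat b|_{\cl_{\betaa M}(\rho_1(M))}=0$, and then applies Lemma~\ref{deltam}(i) to the triple $(\widehat h,\widehat g,\widehat b)$; the condition $\widehat b(\gtm_\alpha^*)=1$ forces $\gtm_\alpha^*\notin\cl_{\betaa M}(\rho_1(M))$ for free. Your tube argument can be completed (using that $t\mapsto\dist(\mu(t),\rho_1(M))$ is semialgebraic and positive, hence $\ge c\,t^d$ near $0$, so a closed tube of order $\ell>d$ is disjoint from $\rho_1(M)$ and \ref{closedbeta1}(ii) applies), but this detail is missing from your write-up and the separating-function trick makes it unnecessary.
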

\begin{proof}
(i) Consider the map $\betaa\,{\tt j}:\betaa M_{\lc}\to\betaa M$ induced by the inclusion ${\tt j}:M_{\lc}\hookrightarrow M$. Recall that if $Y:=\rho_1(M)$, then by \cite[6.7(ii)]{fg3} the restriction 
$$
\betaa\,{\tt j}|:\betaa M_{\lc}\setminus(\betaa\,{\tt j})^{-1}(\cl_{\betaa M}(Y))\to\betaa M\setminus\cl_{\betaa M}(Y)
$$ 
is a homeomorphism. In fact, one can check
$$
\widehat{\partial}M\setminus\cl_{\betaa M}(Y)=\betaa\,{\tt j}\big(\widehat{\partial}M_{\lc}\setminus(\betaa\,{\tt j})^{-1}(\cl_{\betaa M}(Y))\big).
$$

Let $\widehat{h\circ{\tt j}}=\widehat{h}\circ(\betaa\,{\tt j}):\betaa M_{\lc}\to\R$ be the (unique) continuous extension of $h\circ{\tt j}$ to $\betaa M_{\lc}$ and consider its restriction 
$$
\widehat{h\circ{\tt j}}|_{\partial M_{\lc}}=\widehat{h}\circ(\betaa\,{\tt j})|_{\partial M_{\lc}}:\partial M_{\lc}\to\R. 
$$
By Corollary \ref{ppp} and \cite[6.1]{fe1} we deduce that $Z_{\partial M_{\lc}}(\widehat{h\circ{\tt j}}|_{\partial M_{\lc}})$ is a neighborhood of $\gtn_\alpha^*:=(\betaa\,{\tt j})^{-1}(\gtm^*_\alpha)$ in $\partial M_{\lc}$. Moreover, $\gtn_\alpha^*\not\in(\betaa\,{\tt j})^{-1}(\cl_{\betaa M}(Y))$ because $\gtm^*_\alpha\not\in\cl_{\betaa M}(Y)$. Therefore $Z_{\partial M_{\lc}}(\widehat{h}\circ(\betaa\,{\tt j})|_{\partial M_{\lc}})\setminus(\betaa\,{\tt j})^{-1}(\cl_{\betaa M}(Y))$ is a neighborhood of $\gtn_\alpha^*$ in $\partial M_{\lc}\setminus(\betaa\,{\tt j})^{-1}(\cl_{\betaa M}(Y))$. Taking images under $\betaa\,{\tt j}$, we conclude: \em $Z_{\partial M}(H)$ is a closed neighborhood of $\gtm_\alpha^*$ in $\partial M$\em.

(ii) We prove the non-obvious inclusion in (ii). Let $\gtm^*\in Z_{\partial M}(H)$ and $g\in{\mathcal S}^*(M)$ be such that $\gtm^*\in \di_{\betaa M}(g)$. We must prove that $\di_{\betaa M}(g)$ meets $\Int_{\partial M}(Z_{\partial M}(H))$. By \cite[4.10]{fg5} there exists $b\in{\mathcal S}^*(M)$ whose continuous extension $\widehat{b}$ to $\betaa M$ satisfies $\widehat{b}(\gtm^*)=1$ and $\widehat{b}|_{\cl_{\betaa M}(Y)}=0$. Consider the continuous extensions $\widehat{h}:\betaa M\to\R$ and $\widehat{g}:\betaa M\to\R$ of $h$ and $g$. By Lemma \ref{deltam}(i) there exists $\gtm_\alpha^*\in\widetilde{\partial}M$ such that 
$$
\widehat{h}(\gtm_\alpha^*)=\widehat{h}(\gtm^*)=0,\ 
\widehat{g}(\gtm_\alpha^*)=\widehat{g}(\gtm^*)\neq 0,\ 
\widehat{b}(\gtm_\alpha^*)=\widehat{b}(\gtm^*)= 1. 
$$
Consequently, $\gtm_\alpha^*\in\widetilde{\partial}M\setminus\cl_{\betaa M}(Y)\subset\widehat{\partial}M\setminus\cl_{\betaa M}(Y)$ and $\gtm_\alpha^*\in\di_{\betaa M}(g)\cap Z_{\partial M}(H)$. Using (i), this implies $\gtm_\alpha^*\in\Int_{\partial M}(Z_{\partial M}(H))$ and we are done. 
\end{proof}

\section{Homeomorphisms between maximal spectra}\label{s5}

In this section we prove Theorem \ref{betabound}. Since both maximal spectra $\betas M$ and $\betaa M$ are homeomorphic (see \ref{homeo}) and our results are of topological nature, we deal only with $\betaa M$. The essential reason for this choice is that each function $f\in {\mathcal S}^*(M)$ admits a unique continuous extension $\widehat{f}:\betaa M\to\R$ (see \ref{cocr}). Before proving Theorem \ref{betabound}, we need some preparation.

\begin{lem}\label{mn}
Let $C$ be the union of bricks of $M$ of dimension $\leq 1$. The set of points of $\betaa M$, which have a metrizable neighborhood in $\betaa M$, equals $M_{\lc}\cup(\cl_{\betaa M}(C)\setminus C)$. Moreover, the points of $\eta(M)\cup(\cl_{\betaa M}(C)\setminus C)$ are those having an open neighborhood in $\betaa M$ that is homeomorphic to the interval $[0,1)$.
\end{lem}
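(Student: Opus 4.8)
\emph{Plan.} I would assume $M$ bounded and split $M=M^{\geq2}\cup C$, where $M^{\geq2}$ is the union of the bricks of $M$ of dimension $\geq2$; then $M^{\geq2}$ and $C$ are closed in $M$ and $F:=M^{\geq2}\cap C$ is finite (Remark \ref{curvas}(ii)). By \ref{closedbeta1} one has $\betaa M=\cl_{\betaa M}(M^{\geq2})\cup\cl_{\betaa M}(C)$ with $\cl_{\betaa M}(M^{\geq2})\cap\cl_{\betaa M}(C)=F$, so the open set $W_C:=\betaa M\setminus\cl_{\betaa M}(M^{\geq2})$ equals $\cl_{\betaa M}(C)\setminus F$ and is a subspace of $\cl_{\betaa M}(C)\cong\betaa C$. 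Since $\dim C\leq1$, $C$ is locally compact (Remark \ref{curvas}(i)) and $\betaa C$ is metrizable (it is homeomorphic to $\cl_{\R^m}(C)$ with each of the finitely many points of $\cl_{\R^m}(C)\setminus C$ split into finitely many, a compact semialgebraic set; see also \cite[5.17]{fg5}). Finally, a point of $C\setminus F$ has a neighbourhood in $M$ contained in the locally compact set $C$, so $C\setminus M_{\lc}\subseteq F$ and $\cl_{\betaa M}(C)\setminus C\subseteq W_C$. For the easy half of the first assertion: $M_{\lc}$ is open in $\betaa M$ and metrizable (it carries the Euclidean topology as a subspace of $\R^m$), so its points have metrizable neighbourhoods; and every point of $\cl_{\betaa M}(C)\setminus C$ lies in the open set $W_C$, which is a subspace of the metrizable space $\betaa C$, so it also has one.

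\emph{Hard half of the first assertion (the crux).} Equivalently I must show that no point of $\rho_1(M)$ and no point of $\partial M\setminus\cl_{\betaa M}(C)$ has a metrizable neighbourhood. Given such a neighbourhood of a point $\xi$, I would first shrink it to one of the form $\cl_{\betaa M}(U)\cong\betaa U$ with $U$ a closed semialgebraic neighbourhood of $\xi$ in $M$: if $\xi=p\in M$ take $U=M\cap\overline{\Bb}(p,\veps)$; in general choose $g\in{\mathcal S}^*(M)$ with $\widehat g(\xi)\neq0$ and ${\mathcal D}_{\betaa M}(g)$ inside the given neighbourhood, put $U=\{x\in M:|g(x)|\geq c\}$ with $0<c<|\widehat g(\xi)|$, and use Lemma \ref{fc} to see $\cl_{\betaa M}(U)$ is a neighbourhood of $\xi$. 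Since closed subspaces of metrizable spaces are metrizable, $\betaa U$ is metrizable, hence homeomorphic to a semialgebraic set by \cite[5.17]{fg5}. Now if $\xi=p\in\rho_1(M)$, then $\dim_p(M)\geq2$ (otherwise $M$ is locally compact at $p$ by Remark \ref{curvas}(i), contradicting $p\notin M_{\lc}$), and for $\veps$ small $p\in\rho_1(U)$, so $U$ is not locally compact; while if $\xi\in\partial M\setminus\cl_{\betaa M}(C)$, then choosing ${\mathcal D}_{\betaa M}(g)$ disjoint from $\cl_{\betaa M}(C)$ forces $U\subseteq M\setminus C\subseteq M^{\geq2}$, so $\xi\in\partial U$ lies over a point of $\cl_{\R^m}(U)\setminus U$ that is a limit of points of $U$ of local dimension $\geq2$. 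In both cases the contradiction must come from the fact that $\betaa U$ is \emph{not} homeomorphic to a semialgebraic set, and establishing this is the main obstacle. I expect it to be handled by localizing the analysis of \cite[5.17]{fg5}: failure of local compactness, or a frontier point lying over the $\geq2$-dimensional locus of $U$, makes the fibres of the collapse $\betaa U\to\cl_{\R^m}(U)$ positive-dimensional while pinching to a point at the relevant cone vertex, behaviour no semialgebraic set can display.

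\emph{Second assertion.} Call $\xi$ \emph{good} if it has an open neighbourhood $W$ in $\betaa M$ with a homeomorphism $(W,\xi)\cong([0,1),0)$ (equivalently, a basis of such neighbourhoods). If $p\in\eta(M)$, an open neighbourhood $U_0\cong[0,1)$ of $p$ in $M$ contains a compact $U_0'\cong[0,1]$; arguing as in Proposition \ref{cn} gives $p\in\Int_{\betaa M}(\cl_{\betaa M}(U_0'))$, and this interior, being open in $\cl_{\betaa M}(U_0')=U_0'\cong[0,1]$ and containing the point corresponding to $0$, contains a subset $[0,a)\cong[0,1)$ open in $\betaa M$; so $p$ is good. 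If $\xi\in\cl_{\betaa M}(C)\setminus C$, then $\xi\in W_C\subseteq\betaa C$ is a remainder point of the compact $1$-dimensional semialgebraic set $\betaa C$, so by the arc structure of $1$-dimensional semialgebraic sets it has a neighbourhood basis in $\betaa C$ of sets $\cong[0,1)$; intersecting with the open set $W_C$ keeps these open in $\betaa M$, so $\xi$ is good. Conversely, let $\xi$ be good with $(W,\xi)\cong([0,1),0)$. Then $W\cap M$ is a semialgebraic set homeomorphic to a dense subset of $[0,1)$, hence of dimension $\leq1$; the combinatorial point is that a $1$-dimensional semialgebraic set admits a dense embedding into a germ of $[0,1)$ at $0$ only with a single branch at $0$ (two branches would have to go to disjoint open subintervals of $(0,1)$ both accumulating at $0$, which is impossible). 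So, when $\xi\in M$, $M$ is near $\xi$ a half-open arc with closed endpoint $\xi$, i.e.\ $\xi\in\eta(M)$; and when $\xi\in\partial M$, $W\cap M$ contains an open arc $A\cong(0,1)$ of $M$ accumulating at $\xi$, all of whose points have local dimension $1$ in $M$, so $A\subseteq C$ (Remark \ref{curvas}(ii)) and $\xi\in\cl_{\betaa M}(C)\setminus C$. Hence the good points are exactly $\eta(M)\cup(\cl_{\betaa M}(C)\setminus C)$.
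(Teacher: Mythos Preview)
Your hard half has a genuine gap that you yourself flag: after reducing to a closed semialgebraic $U\subset M$ with $\betaa U$ metrizable, you invoke \cite[5.17]{fg5} only to conclude that $\betaa U$ is homeomorphic to a semialgebraic set, and then admit that deriving a contradiction from this ``is the main obstacle''. You never actually carry this out. The paper avoids the obstacle entirely by using the \emph{other} equivalent condition in \cite[5.17]{fg5}: $\betaa U$ is metrizable if and only if $U^{\geq 2}$ is compact. With that in hand both of your cases close immediately. If $\xi=p\in\rho_1(M)$, then $U^{\geq 2}$ compact forces $U$ to be locally compact by Remark~\ref{curvas}(iii); since $U$ is a neighbourhood of $p$ in $M$, this gives $p\in M_{\lc}$, a contradiction. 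If $\xi\in\partial M\setminus\cl_{\betaa M}(C)$ and $U\subset M^{\geq 2}$, observe that the open set $V:=\{|g|>c\}\cap M\subset U$ satisfies $\dim_q(U)\geq\dim_q(V)=\dim_q(M)\geq 2$ for every $q\in V$, so $V\subset U^{\geq 2}$; since $\xi\in\{|\widehat g|>c\}$ and $M$ is dense in $\betaa M$, we get $\xi\in\cl_{\betaa M}(V)\subset\cl_{\betaa M}(U^{\geq 2})$. But $U^{\geq 2}$ is compact, so $\cl_{\betaa M}(U^{\geq 2})=U^{\geq 2}\subset M$, contradicting $\xi\in\partial M$. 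This is essentially the paper's route (it runs the same dichotomy for a general $p\in T$ rather than by contradiction, distinguishing $p\in Z^{\geq 2}$ from $p\notin Z^{\geq 2}$), and the key step you are missing is precisely the compactness of the $\geq 2$-locus.

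A smaller issue in your converse for the second assertion: $W\cap M$ need not be semialgebraic, so your ``combinatorial point'' about branches does not apply as stated. The clean fix is to use the first assertion: a good $\xi$ has a metrizable neighbourhood, hence $\xi\in M_{\lc}\cup(\cl_{\betaa M}(C)\setminus C)$. If $\xi\in\cl_{\betaa M}(C)\setminus C$ you are done; if $\xi\in M_{\lc}$ then $W\cap M_{\lc}$ is open in $\betaa M$ and in $W\cong[0,1)$, so it contains a set $[0,\varepsilon)$ giving $\xi$ an open neighbourhood in $M$ topologically homeomorphic to $[0,1)$. Since topological and semialgebraic dimension agree, $\dim_\xi(M)\leq 1$, and then the Nash decomposition of a $1$-dimensional semialgebraic germ shows there is a single branch with $\xi$ as endpoint, i.e.\ $\xi\in\eta(M)$.
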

\begin{proof}
Let $T$ be the set of points of $\betaa M$ that have a metrizable neighborhood in $\betaa M$ and let $q\in M_{\lc}\cup(\cl_{\betaa M}(C)\setminus C)$. If $q\in(\cl_{\betaa M}(C)\setminus C)$, then $q$ has an open neighborhood in $\betaa M$ that is homeomorphic to $[0,1)$, which is a metrizable space. On the other hand, $M_{\lc}$ is open in $\betaa M$ and a metrizable neighborhood of all its points. Therefore $M_{\lc}\cup(\cl_{\betaa M}(C)\setminus C)\subset T$. Let us prove $T\subset M_{\lc}\cup(\cl_{\betaa M}(C)\setminus C)$ next. 

Indeed, let $p\in T$ and $W$ be a metrizable neighborhood of $p$ in $\betaa M$. Let $f\in{\mathcal S}^*(M)$ be such that $p\in{\mathcal D}_{\betaa M}(f)\subset W$. We may assume that the unique continuous extension $\widehat{f}:\betaa M\to\R$ of $f$ to $\betaa M$ satisfies $\widehat{f}(p)=c>0$ and we consider the closed semialgebraic subset $Z:=f^{-1}([\tfrac{c}{2},+\infty))=\widehat{f}^{-1}([\tfrac{c}{2},+\infty))\cap M$ of $M$. 

By \ref{closedbeta1}(i) $\betaa Z$ is homeomorphic to $\cl_{\betaa M}(Z)$. It contains $p$ and is metrizable because it is a subset of $W$. Hence, by \cite[5.17]{fg5} the subset $Z^{\geq 2}$ of points of local dimension $\geq2$ of $Z$ is compact. We write $Z=Z^{\geq 2}\cup C_0$ where $C_0$ is the union of bricks of $Z$ of dimension $\leq 1$ (see Proposition \ref{bricks}), so it is a closed subset of $M$. By \ref{closedbeta1}(i) we can identify $\betaa Z=\betaa Z^{\geq 2}\cup\betaa C_0=Z^{\geq2}\cup\betaa C_0$. We distinguish two cases:

\vspace{1mm}
\begin{substeps}{mn}
If $p\in Z^{\geq 2}$, then $p\in M$ and $f(p)=c$. Thus, since $Z^{\geq2}$ is compact, $Z$ is by Remark \ref{curvas}(iii) a locally compact neighborhood of $p$ in $M$; hence, by Proposition \ref{rho} $p\in M_{\lc}$. In fact, $p\in M_{\lc}\setminus\eta(M)$.
\end{substeps}

\vspace{1mm}
\begin{substeps}{mn}
If $p\not\in Z^{\geq 2}$, then $p\in\betaa C_0\setminus Z^{\geq 2}$. By Lemma \ref{fc} $\cl_{\betaa M}(Z)\cong\betaa Z$ is a neighborhood of $p$ in $\betaa M$. Thus, $\betaa C_0\setminus Z^{\geq 2}$ is a neighborhood of $p$ in $M$ and there are two possible situations:

(a) $p\in C_0$ and so $C_0\setminus Z^{\geq 2}=(\betaa C_0\setminus Z^{\geq 2})\cap M$ is a locally compact neighborhood of $p$ in $M$; hence, $p\in M_{\lc}$.

(b) $p\in\betaa C_0\setminus C_0\subset\betaa M\setminus M$; hence, $\betaa C_0\setminus Z^{\geq 2}$ is a neighborhood of $p$ in $\betaa M$. By \cite[2.9.10]{bcr} the semialgebraic curve $C_0$ is the disjoint union of a finite set ${\mathcal F}:=\{q_1,\ldots,q_r\}$ and finitely many Nash submanifolds $N_i$ for $i=1,\ldots,s$ where each of them is Nash diffeomorphic to the open interval $(0,1)$. Thus, $\betaa C_0={\mathcal F}\cup\bigcup_{i=1}^s\cl_{\betaa C_0}(N_i)$ and we claim that there exists exactly one index $i=1,\ldots,s$ such that $p\in\cl_{\betaa C_0}(N_i)$. Otherwise there exist $1\leq i<j\leq s$ such that 
$$
p\in\cl_{\betaa C_0}(N_i)\cap\cl_{\betaa C_0}(N_j)=\cl_{\betaa C_0}(\cl_M(N_i)\cap\cl_M(N_j))
$$ 
and it must be one of the points $q_k\in {\mathcal F}\subset M$ as this last intersection is non empty (see \ref{closedbeta1}). This is a contradiction because $p\in\betaa C_0\setminus C_0=\betaa C_0\setminus M$. Since $p\not\in M$, we may assume that $p\in\cl_{\betaa C_0}(\cl_M(N_1))\setminus({\mathcal F}\cup\bigcup_{i=2}^s\cl_{\betaa C_0}(N_i))$. Moreover, $\cl_M(N_1)=\cl_{C_0}(N_1)$ is homeomorphic either to $(0,1)$ or to $[0,1)$. Otherwise $\cl_M(N_1)$ is homeomorphic either to the unit circle $\sph^1$ or to $[0,1]$ and it is compact; hence, by \ref{freefixed}
$$
p\in\cl_{\betaa C_0}(N_1)=\cl_{\betaa C_0}(\cl_M(N_1))=\cl_M(N_1)\subset M,
$$
which is a contradiction. Now we obtain by \ref{closedbeta1}(i) and \cite[4.9]{fg5}
$$
\cl_{\betaa C_0}(N_1)=\cl_{\betaa C_0}(\cl_{C_0}(N_1))\cong\betaa\cl_{C_0}(N_1)\cong\betaa\,[0,1)\cong\betaa\,(0,1)\cong[0,1].
$$
On the other hand, $\cl_{\betaa C_0}(N_1)$ is a neighborhood of $p$ in $\betaa M$ because
$$
\betaa M\setminus\Big(Z^{\geq2}\cup {\mathcal F}\cup\bigcup_{i=2}^s\cl_{\betaa C_0}(N_i)\Big)=\cl_{\betaa C_0}(N_1)\setminus\Big(Z^{\geq2}\cup {\mathcal F}\cup\bigcup_{i=2}^s\cl_{\betaa C_0}(N_i)\Big)
$$
is a neigborhood of $p$ in $\betaa M$. As $\cl_{\betaa C_0}(N_1)\cong[0,1]$, $N_1$ is homeomorphic to $(0,1)$ and $p\in\cl_{\betaa C_0}(N_1)\setminus N_1$, we conclude that $\cl_{\betaa C_0}(N_1)$ is a neighborhood of $p$ in $\betaa M$ that is homeomorphic to $[0,1)$ and whose intersection with $M$ is homeomorphic to $(0,1)$. Therefore $p\in\cl_{\betaa M}(C)\setminus C$, as required.
\end{substeps}\setcounter{substep}{0} 
\end{proof}

Now we are ready to prove Theorem \ref{betabound}.
\begin{proof}[Proof of Theorem \em\ref{betabound}]
By symmetry all is reduced to show $\gamma(N_{\lc}\setminus\eta(N_{\lc}))\subset M_{\lc}\setminus\eta(M_{\lc})$. Notice that $\eta(N)=\eta(N_{\lc})$ and $\eta(M)=\eta(M_{\lc})$ because the semialgebraic curves are by Remark \ref{curvas} locally compact. Thus, it is sufficient to check $\gamma(N_{\lc}\setminus\eta(N))\subset M_{\lc}\setminus\eta(M)$.

Indeed, let $p\in N_{\lc}\setminus\eta(N)$ and $q:=\gamma(p)$. As $N_{\lc}$ is open in $\betaa N$, there exists a compact semialgebraic neighborhood $K\subset N_{\lc}$ of $p$ in $\betaa N$, which is clearly metrizable. Thus, $\gamma(K)$ is a metrizable neighborhood of $q$ in $\betaa M$. By Lemma \ref{mn} either $q\in M_{\lc}\setminus \eta(M)$ or it has an open neighborhood that is homeomorphic to the interval $[0,1)$. The latter would mean that $p$ has a neighborhood in $N$ that is homeomorphic to the interval $[0,1)$, and so $p\in\eta(N)$, which is a contradiction. Thus, $\gamma(p)=q\in M_{\lc}\setminus\eta(M)$, as wanted.
\end{proof}

We now show the relevance of the semialgebraic character of maps between semialgebraic sets in the study of properties of the operator $\betaa$ .
\begin{examples}\label{gpn}
(i) Let $M:=\R^{2}\setminus\{0\}$ and consider the smooth path $\gamma:(0,1]\to M,\ t\mapsto(t,\lambda\exp(-1/t))$ where $\lambda$ is a fixed positive real number. Then 
\begin{itemize}
\item[(1)] For all $f\in{\mathcal S}^*(M)$ there exists the limit $\lim_{t\to0}\,(f\circ\gamma)(t)\in\R$.
\item[(2)] The set $\gtm^*:=\{f\in{\mathcal S}^*(M):\, \lim_{t\to0}\,(f\circ\gamma)(t)=0\}$ is a maximal ideal of ${\mathcal S}^*(M)$.
\item[(3)] $\gtm^*=\gtm^*_\alpha$ where $\alpha:(0,1]\to M,\ t\mapsto(t,0)$.
\end{itemize}
\begin{proof}
Let $f\in{\mathcal S}^*(M)$ and $\widehat{f}:\betaa M\to\R$ be the unique continuous extension of $f$ to $\betaa M$. Assume $\widehat{f}(\gtm^*_\alpha)=0$ and observe that statements (1), (2) and (3) are straightforward consequences of the following equality.

\vspace{1mm}
\begin{substeps}{gpn}\label{evaluando} $\lim_{t\to0^+}f(t,\lambda\exp(-1/t))=0=\widehat{f}(\gtm^*_\alpha)=\lim_{t\to0^+}f(t,0)=0$.
\end{substeps}

\vspace{1mm}
Indeed, $Z_{\partial M}(\widehat{f})$ is a closed neighborhood of $\gtm_\alpha^*$ in $\partial M=\betaa M\setminus M$ by Corollary \ref{neigh0}. Thus, there exists $g\in{\mathcal S}^*(M)$ such that $\gtm_{\alpha}^*\in{\mathcal D}_{\betaa M}(g)\cap\partial M\subset Z_{\partial M}(\widehat{f})$. We may also assume $c:=\widehat{g}(\gtm_\alpha^*)>0$. Consider the closed semialgebraic set $Z:=g^{-1}([\tfrac{c}{2},+\infty))\cap\{x^2+y^2\leq1\}$. Since $\gtm^*_\alpha\in{\mathcal D}_{\betaa M}(g)\cap\partial M$, there exists $\veps>0$ such that $Y_\veps:=(0,\veps]\times\{0\}\subset Z$. 

Otherwise, as $Z$ is semialgebraic, there exists $\veps>0$ such that the closed semialgebraic subsets $Z$ and $Y=Y_\veps$ of $M$ are disjoint. Then there exists by \cite{dk} a semialgebraic function $h\in{\mathcal S}^*(M)$ such that $h|_Z=0$ and $h|_Y=1$. Thus, $\widehat{h}(\gtm_\alpha^*)=1$ and by \ref{closedbeta1}(i)\&(ii) and Lemma \ref{fc} we obtain
$$
\gtm^*_\alpha\not\in {\mathcal Z}_{\betaa M}(h)\supset\cl_{\betaa M}(Z)=\cl_{\betaa M}(\widehat{g}^{-1}([\tfrac{c}{2},+\infty))\cap\cl_{\betaa M}(x^2+y^2\leq 1),
$$
which is a contradiction.

Since the Taylor series at the origin of the function $\lambda\exp(-1/t)$ is identically zero, the image of $\gamma|:(0,\delta]\to M$ for $\delta>0$ small enough is contained in $Z$. On the other hand, since $\widehat{g}^{-1}([\tfrac{c}{2},+\infty))\cap\partial M\subset Z_{\partial M}(\widehat{f})$, the closure of the graph $T$ of $f|_Z$ in $\R^3$ is $T\cup\{(0,0,0)\}$; this implies \ref{gpn}.\ref{evaluando}, as wanted.
\end{proof}

(ii) Consider the homeomorphism $\varphi:\R^2\to\R^2$ given by the formulas
$$
(x,y)\mapsto
\left\{\begin{array}{ll}
\!\!\big(x,\big(1-\frac{\exp(-1/x)}{x}\big)(2y-x)+\exp(-1/x)\big)&\text{ if $0\leq\frac{1}{2}x\leq y\leq x$,}\\[4pt]
\!\!\big(x,\frac{2\exp(-1/x)}{x}y\big)&\text{ if $0\leq y \leq\frac{1}{2}x$,}\\[5pt]
\!\!(x,y)&\text{ if $y\leq 0$ or $x\leq y$.}
\end{array}
\right.
$$
Write $M:=\R^2\setminus\{(0,0)\}$. Since $\varphi(0,0)=(0,0)$, the restriction $\psi:=\varphi|_{M}:M\to M$ is a homeomorphism. Note that $\varphi(t,\lambda t)=(t,\lambda\exp(-1/t))$ for $0\leq\lambda\leq 1/2$ and $t>0$. Thus, the homeomorphism $\psi:M\to M$ cannot be extended to a homeomorphism $\widehat{\psi}:\betaa M\to\betaa M$ because, in view of Example \ref{gpn}(i), such an extension would map the (distinct) maximal ideals $\gtm_{\lambda}^*:=\{f\in{\mathcal S}^*(M):\, \lim_{t\to0^+}f(t,\lambda t)=0\}$, where $0\leq\lambda\leq 1/2$, onto the maximal ideal $\gtm_0^*$.

The behavior of a non-semialgebraic homeomorphism between semialgebraic sets turns out to be impredictable with respect to its possible extensions to the semialgebraic Stone--\v{C}ech compactification. In fact, semialgebraic paths become useless in the absence of semi\-algebraicity.\fina
\end{examples}

\appendix
\section{Differences between the sets $\widetilde{\partial}M$, $\widehat{\partial}M$ and $\partial M$}\label{diferencias}

The purpose of this Appendix is to prove that the non-empty differences $\partial M\setminus\widehat{\partial}M$ and $\widehat{\partial}M\setminus\widetilde{\partial}M$ are respectively dense in $\partial M$ and $\widehat{\partial}M$ under mild conditions. Recall that $M^{\geq2}$ denotes the (semialgebraic) subset of points of $M$ of local dimension $\geq 2$.

\begin{prop}\label{diferencias-s}
Assume that $M=M^{\geq2}$ is not compact. Then $\partial M\setminus\widehat{\partial}M$ is dense in $\partial M$ and $\widehat{\partial}M\setminus\widetilde{\partial}M$ is dense in $\widehat{\partial}M$.
\end{prop}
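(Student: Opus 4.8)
\emph{Strategy.} Both density assertions will follow from explicit deformations of paths, in each case exploiting that $M=M^{\geq2}$ is two--dimensional at every escape point, which provides the transverse room to move a path inside $M$ while keeping it infinitesimally close to the original one --- so close that the associated maximal ideal does not move in $\betaa M$, in the precise sense controlled by the proof of Proposition \ref{gpn3}. First I would isolate the relevant algebraic descriptions. By \ref{misp}, Proposition \ref{pmtalpha} and \cite[Thm.3, p.3]{fg2}, for a path $\alpha$ with $\alpha(0)\notin M$ the ideal $\gtp_\alpha=\ker\psi_\alpha$ is a prime $z$--ideal with $\dgt_M(\gtp_\alpha)=\operatorname{trdeg}_{\R}\qf({\mathcal S}(M)/\gtp_\alpha)\geq1$, it coincides with the free maximal ideal $\gtm_\alpha$ attached to $\gtm_\alpha^*$, and: $\gtm_\alpha^*\in\widetilde{\partial}M$ exactly when $\dgt_M(\gtp_\alpha)=1$ (equivalently, $\alpha$ traces a semialgebraic curve), whereas $\gtm^*\in\widehat{\partial}M$ forces $\qf({\mathcal S}(M)/\gtm)$ to embed as an ordered field into the Puiseux field $F_1=\R((\t^*))$, hence to carry a value group of rank one. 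Consequently $\widehat{\partial}M\setminus\widetilde{\partial}M$ is exactly the set of free maximal ideals traced by \emph{formal} paths $\alpha$ with $\operatorname{trdeg}_{\R}\R(\alpha_1,\dots,\alpha_m)\geq2$, and any free maximal ideal whose residue field has a value group of rank $\geq2$ lies in $\partial M\setminus\widehat{\partial}M$.

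\emph{Density of $\widehat{\partial}M\setminus\widetilde{\partial}M$ in $\widehat{\partial}M$.} Given $\gtm_\alpha^*\in\widehat{\partial}M$, with $\alpha\in M_{F_1}$, $p:=\alpha(0)\in\cl_{\R^m}(M)\setminus M$, and a neighbourhood $W$ of $\gtm_\alpha^*$ in $\betaa M$: if $\operatorname{trdeg}_{\R}\R(\alpha_1,\dots,\alpha_m)\geq2$ there is nothing to do, so suppose it equals $1$, so the germ at $p$ of $\alpha$ lies on a one--dimensional semialgebraic set $C\subset M$ with $p\in\cl_{\R^m}(C)\setminus C$. Since $M=M^{\geq2}$, $M$ is two--dimensional along $C$ near $p$; by the local structure of semialgebraic sets one finds a two--dimensional semialgebraic patch $\Omega\subset M$ containing the germ of $C$ near $p$, a semialgebraic unit field $\nu$ on $\Omega$ transverse to $C$, and an exponent $k$ such that the ``width'' of $\Omega$ around $C$ is $\geq c\,\|x-p\|^{k}$. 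Reparametrising so that $\alpha_1=\t$, pick a series $\sigma\in\R[[\t]]$ transcendental over $\R(\t)$ and set $\alpha':=\alpha+\t^{N}\sigma\cdot\nu_{F_1}(\alpha)$ for $N$ large: a Lemma \ref{positivity}--type estimate together with the width bound gives $\alpha'\in\Omega_{F_1}\subset M_{F_1}$, clearly $\alpha'(0)=p\notin M$, and the transverse coordinate of $\alpha'$ is a unit times $\t^{N}\sigma$, hence transcendental over $\R(\t)$, so $\operatorname{trdeg}_{\R}\R(\alpha'_1,\dots,\alpha'_m)\geq2$ and $\gtm_{\alpha'}^*\in\widehat{\partial}M\setminus\widetilde{\partial}M$. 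Finally, taking $N$ so large that $\alpha'$ agrees with $\alpha$ up to any prescribed order, the neighbourhood estimates in the proof of Proposition \ref{gpn3} yield $\gtm_{\alpha'}^*\in W$.

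\emph{Density of $\partial M\setminus\widehat{\partial}M$ in $\partial M$.} Fix a nonempty open subset of $\partial M$, say ${\mathcal D}_{\betaa M}(f)\cap\partial M\neq\varnothing$. By Lemma \ref{deltam}(i) we may choose $\gtm_\gamma^*\in{\mathcal D}_{\betaa M}(f)\cap\widetilde{\partial}M$ with $\gamma\in M_{F_0}$ a semialgebraic path and $p:=\gamma(0)\in\cl_{\R^m}(M)\setminus M$; as before take a two--dimensional patch $\Omega\subset M$ near $p$ through the germ of $\gamma$, with transverse unit field $\nu$ and width exponent $k$. Now let $F$ be a real closed field extending $\R$ whose natural value group is $\mathbb{Q}\times\mathbb{Q}$ ordered lexicographically, and pick $\t_1>0$ of value $(0,1)$ and $\t_2>0$ of value $(1,0)$, so $0<\t_2<\t_1^{\ell}$ for all $\ell\geq1$; parametrise $\gamma$ in $\Omega_F$ so that its ``radial'' coordinate is of $\t_1$--scale, and set $\alpha:=\gamma+\t_2\cdot\nu_F(\gamma)\in F^m$. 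As $\t_2$ is smaller than every power of the radial coordinate (hence than $c\,\|x-p\|^{k}$) one gets $\alpha\in\Omega_F\subset M_F$, and ${\rm ev}_0(\alpha)=p$, where ${\rm ev}_0\colon F\to\R$ is the residue of the natural valuation. Put $\gtm^{**}:=\ker({\rm ev}_0\circ\psi_\alpha|_{{\mathcal S}^*(M)})$; this is a maximal ideal of ${\mathcal S}^*(M)$, free because $p\notin M$ (argue as in \ref{mifp}), so $\gtm^{**}\in\partial M$. The transverse coordinate of $\alpha$ equals $\t_2$ up to a unit, so $\qf({\mathcal S}(M)/\gtm)$ --- with $\gtm$ the ${\mathcal S}(M)$--ideal attached to $\gtm^{**}$ via $\Phi$ --- contains both $\gamma_1$, of $\t_1$--scale, and an element of $\t_2$--scale, hence has a value group of rank $\geq2$; by the first paragraph $\gtm^{**}\notin\widehat{\partial}M$. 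Finally, since $\alpha$ is infinitesimally close to $\gamma$ inside $\Omega$ --- more tightly than any power of the radial coordinate --- the argument in the proof of Proposition \ref{gpn3}, run over $F$ in place of $F_1$, shows $\gtm^{**}\in{\mathcal D}_{\betaa M}(f)$, so $\gtm^{**}\in\big({\mathcal D}_{\betaa M}(f)\cap\partial M\big)\setminus\widehat{\partial}M$.

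\emph{Main obstacle.} The two delicate points are internal to the last construction. First, one must re--run the neighbourhood estimates in the proof of Proposition \ref{gpn3} with the Puiseux field $F_1$ replaced by the rank--two field $F$ --- orders now taking values in $\mathbb{Q}\times\mathbb{Q}$ --- in order to guarantee that the non--standard perturbation $\alpha$ does land in the prescribed basic open set; this is pure, if somewhat laborious, bookkeeping of orders. Second, and more conceptual, one must verify that passing from the maximal ideal $\gtm^{**}$ of ${\mathcal S}^*(M)$ to its partner $\gtm$ in ${\mathcal S}(M)$ under the homeomorphism $\Phi$ of \ref{homeo} does not coarsen the natural valuation, i.e.\ that the two incommensurable scales $\t_1$ and $\t_2$ survive in $\qf({\mathcal S}(M)/\gtm)$; this I would settle by tracking the multiplicative set $\mathcal W$ of zero--free functions through $\psi_\alpha$ and comparing with the analysis of $\gtm_\alpha$ and $\gtm_\alpha^*$ in Proposition \ref{pmtalpha}. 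Everything else --- staying inside $M$, freeness, and the trivial ``easy'' inclusions --- is routine given Lemmas \ref{positivity} and \ref{deltam} and Proposition \ref{gpn3}.
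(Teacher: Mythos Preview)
Your perturbation argument for the density of $\widehat{\partial}M\setminus\widetilde{\partial}M$ is a reasonable alternative to the paper's proof, which reduces both claims simultaneously to the punctured triangle $T=\{0\leq y\leq x\leq1\}\setminus\{(0,0)\}$ via triangulation and Lemma~\ref{delrel}, and then exhibits on $T$ the single explicit transcendental series $\sum_{n\geq2}n!\,\t^n$. The geometric ingredients you invoke (the two--dimensional patch $\Omega$, the transverse field $\nu$, the polynomial width bound) are precisely what the triangulation step supplies, so this part can be made rigorous along your lines.

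Your rank--two construction for $\partial M\setminus\widehat{\partial}M$, however, fails, and the difficulty you label the ``main obstacle'' is not a bookkeeping issue but the collapse of the whole argument. Work in the model case: $T$ as above, $\gamma(t)=(t,0)$, and $\alpha=(\t_1,\t_2)\in T_F$ with $0<\t_2<\t_1^n$ for every $n\geq1$. The prime ideal $\gtp_\alpha:=\ker\psi_\alpha$ of ${\mathcal S}(T)$ is \emph{not} maximal: if $f_F(\t_1,\t_2)=0$ then the semialgebraic set $Z_T(f)$ must contain a horn $\{0<y<cx^k,\ 0<x<\varepsilon\}$, whose closure in $T$ contains the axis segment $(0,\varepsilon)\times\{0\}$, whence $f\in\gtm_\gamma$; and the coordinate function $y$ lies in $\gtm_\gamma\setminus\gtp_\alpha$. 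Thus $\gtp_\alpha\subsetneq\gtm_\gamma$. The same horn argument shows $\gtm_\gamma\cap{\mathcal S}^*(T)\subset\gtm^{**}$, so by the uniqueness in \ref{homeo} one obtains $\gtm^{**}=\gtm_\gamma^*\in\widetilde{\partial}T\subset\widehat{\partial}T$. In other words, the second infinitesimal scale $\t_2$ contributes a \emph{non--maximal} prime sitting strictly below $\gtm_\gamma$, not a new closed point of $\Speca(T)$; the passage through $\Phi$ necessarily coarsens the valuation back to rank one. The paper produces points of $\partial T\setminus\widehat{\partial}T$ by an entirely different, height--theoretic mechanism: Lemma~\ref{pppoints0} (resting on \cite[7.1]{fe1}) yields a free maximal ideal $\gtm_1^*$ with $\hgt(\gtm_1^*)\geq2$ but $\hgt(\gtm_1)=0$, and Corollary~\ref{ppp} then excludes any such ideal from $\widehat{\partial}T$.
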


We begin with some preliminary results.

\begin{lem}\label{pppoints0}
Assume that $M$ is bounded. Then $\partial M\setminus\widehat{\partial}M\neq\varnothing$ if and only if $M^{\geq2}$ is not compact. Moreover, if $M^{\geq2}$ is compact, then $\widetilde{\partial} M=\partial M$ is a finite set.
\end{lem}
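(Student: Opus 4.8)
The plan is to split into two cases according to whether $M^{\geq2}$ is compact; the compact case will simultaneously give the ``Moreover'' assertion and the implication ``$M^{\geq2}$ compact $\Rightarrow\partial M\setminus\widehat\partial M=\varnothing$'', and the non-compact case the converse.

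\textbf{Case $M^{\geq2}$ compact.} Here I would use Remark \ref{curvas}(iii): $M=M_{\lc}$ and, writing $M=M^{\geq2}\cup L$ with $L$ the (closed in $M$) union of the bricks of dimension $\le1$, compactness of $M^{\geq2}$ yields $\cl_{\betaa M}(M^{\geq2})=M^{\geq2}$, hence $\betaa M=M^{\geq2}\cup\cl_{\betaa M}(L)$ and $\partial M=\cl_{\betaa M}(L)\setminus L$, which is identified via \ref{closedbeta1}(i) with $\partial L:=\betaa L\setminus L$. Since $\dim L\le1$, by \cite[2.9.10]{bcr} $L$ is a disjoint union of finitely many points and finitely many Nash arcs $N_i\cong(0,1)$; each $\cl_L(N_i)$ is closed in $L$ and semialgebraically homeomorphic to $(0,1)$, $(0,1]$, $[0,1]$ or $\sph^1$, and $\betaa L=(\text{finite set})\cup\bigcup_i\cl_{\betaa L}(\cl_L(N_i))$. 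Using $\betaa(0,1)\cong\betaa(0,1]\cong[0,1]$ (\cite[4.9]{fg5}) and \ref{closedbeta1}(i), each $\betaa\cl_L(N_i)\setminus\cl_L(N_i)$ is at most a two-point set consisting of the endpoints reached by the obvious semialgebraic parametrizing paths. Thus $\partial L$ is finite and contained in $\widetilde\partial L$; transporting back, $\partial M$ is finite and $\widetilde\partial M=\widehat\partial M=\partial M$, so in particular $\partial M\setminus\widehat\partial M=\varnothing$.

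\textbf{Case $M^{\geq2}$ non-compact: construction.} The goal is to exhibit one free maximal ideal of ${\mathcal S}^*(M)$ that is not the ideal $\gtm_\alpha^*$ of any formal path. As $M$ is bounded and $M^{\geq2}$ is closed in $M$ and not compact, $\cl_{\R^m}(M^{\geq2})\setminus M=\cl_{\R^m}(M^{\geq2})\setminus M^{\geq2}\neq\varnothing$, so there is $p\in\cl_{\R^m}(M^{\geq2})\setminus M$ with $\dim_pM\ge2$. By cell decomposition, after a linear change of coordinates with $p=0$ we may assume $M$ contains a $2$-dimensional cell $C_0$ on which the projection $(x_1,x_2)$ is a homeomorphism onto $\{0<x_1<\delta,\ 0<x_2<\phi(x_1)\}$, with $\phi$ semialgebraic, $\phi(0^+)=0$ and $\phi(x_1)\sim c\,x_1^{e}$ near $0$ for some $e\in\Q^{>0}$, $c>0$ (after possibly exchanging $x_1,x_2$, $e\ge1$). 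Fix $\chi\in{\mathcal S}^*(M)$ with $0\le\chi\le1$ and $\chi\equiv1$ on $C_0\cap\Bb(p,\delta_0)$ for small $\delta_0$. Let $F$ be a real closed field extending $\R$ whose natural valuation $v$ over $\R$ has rank two (e.g.\ the Hahn field $\R((t^{\Q\oplus\Q}))$ with the lexicographic order on the value group), with valuation ring $\mathcal O_F$ and residue epimorphism $\operatorname{res}\colon\mathcal O_F\to\R$, and fix $\t,\s\in F$ with $v(\t),v(\s)>0$ and $v(\t)$ lying in a strictly larger convex subgroup of the value group than $v(\s)$, so that $0<\t\ll\s^k\ll\s\ll1$ for all $k\ge1$. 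Let $\q\in M_F$ be the point of $C_{0,F}$ with $\q_1=\t$, $\q_2=\t^{e}\s$ and $\q_j=\xi_j(\t,\t^e\s)$ ($j\ge3$), where $x_j=\xi_j(x_1,x_2)$ on $C_0$; one checks $\q\in C_{0,F}$ because $0<\t^e\s<\phi(\t)$ in $F$ (since $v(\t^e\s)>v(c\t^e)$). As in \ref{mifp}, $\psi_\q({\mathcal S}^*(M))\subseteq\mathcal O_F$ (square roots of positive bounded functions), so $\varphi_\q:=\operatorname{res}\circ\,\psi_\q|_{{\mathcal S}^*(M)}$ is an $\R$-algebra epimorphism ${\mathcal S}^*(M)\to\R$; put $\gtm^*:=\ker\varphi_\q$, a maximal ideal. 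It is free, since $\varphi_\q(\|x-p\|)=\operatorname{res}(\|\q\|_F)=0$ ($\q$ being infinitely near $p$), so $\gtm^*=\gtm^*_{p'}$ would force $p'=p\notin M$; hence $\gtm^*\in\partial M$.

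\textbf{Why $\gtm^*\notin\widehat\partial M$.} Consider the bounded semialgebraic functions on $M$
$$
h:=\chi\cdot\frac{\max(x_2,0)}{\max(x_1,0)^{e}+\max(x_2,0)+(1-\chi)},\qquad
f_k:=\chi\cdot\frac{\max(x_2,0)^{k}}{\max(x_1,0)^{ek+1}+\max(x_2,0)^{k}+(1-\chi)}\quad(k\ge1),
$$
whose denominators are positive on $M$ (they vanish only on $\cl_M(C_0\cap\Bb(p,\delta_0))$, where $x_1>0$). Since $\chi_F(\q)=1$, $\q_1=\t>0$, $\q_2=\t^e\s>0$, the relations $0<\t\ll\s^k$ give $\varphi_\q(h)=\operatorname{res}\big(\tfrac{\t^e\s}{\t^e+\t^e\s}\big)=0$ and $\varphi_\q(f_k)=\operatorname{res}\big(\tfrac{\s^k}{\t+\s^k}\big)=1$ for every $k$. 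Suppose $\gtm^*=\gtm_\beta^*$ for a formal path $\beta\in M_{F_1}$; then $\varphi_\beta=\varphi_\q$ on ${\mathcal S}^*(M)$ and $\beta(0)=p=0$, and I would read off the asymptotics of $\beta$: $\varphi_\beta(f_k)=1$ forces $\beta_2>0$ in $\R[[\t]]$ and $\chi\circ\beta\to1$ (otherwise the numerator vanishes or is swamped); next $\varphi_\beta(h)=0$ then forces $\beta_1>0$ as well (if $\beta_1\le0$ one gets $\varphi_\beta(h)=1$), and $\varphi_\beta(f_k)=1$ gives $\omega(\beta_2)<(e+\tfrac1k)\omega(\beta_1)$ for all $k$, i.e.\ $\omega(\beta_2)\le e\,\omega(\beta_1)$ ($\omega(\beta_1)<\infty$ since $\beta_1\neq0$); while $\varphi_\beta(h)=0$ forces $\omega(\beta_2)>e\,\omega(\beta_1)$. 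This contradiction shows no such $\beta$ exists, so $\gtm^*\in\partial M\setminus\widehat\partial M$.

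\textbf{Main obstacle.} The idea — that the rank-two ($\Q\oplus\Q$) scale carried by $\q$ cannot be reproduced by the single-parameter ($\Q$-valued) scale of a formal path — is clear; the work is entirely in the last step. One must check that the displayed functions are genuine globally defined bounded semialgebraic functions on $M$ (hence the cut-off $\chi$ and the $(1-\chi)$-regularizations), and one must control $\varphi_\beta(h)$ and $\varphi_\beta(f_k)$ for \emph{every} possible asymptotic behaviour of $\beta$ at $p$ — in particular when $\beta$ approaches $p$ along the Euclidean frontier of $C_0$ or through $M\setminus C_0$, which is exactly what makes all of the inequalities above necessary. The remaining ingredients — that $\omega(\beta_1)<\infty$ once $\beta_1\neq0$ has been forced, and that a bounded semialgebraic function composed with a formal path has a well-defined value at $0$ (\ref{mifp}) — are routine.
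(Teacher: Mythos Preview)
Your proof is correct and takes a genuinely different route from the paper's.

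For the compact case, the paper simply quotes \cite[5.17]{fg5} (finiteness of $\partial M$ when $M^{\geq2}$ is compact) and then the density Lemma~\ref{deltam}(iii) to conclude $\widetilde\partial M=\partial M$. Your unpacking via the one-dimensional structure of $L$ and the explicit identification $\betaa(0,1)\cong[0,1]$ reaches the same conclusion more transparently but at greater length.

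For the non-compact case the contrast is sharper. The paper argues purely by heights: it invokes \cite[7.1]{fe1} to produce a free maximal ideal $\gtm_1^*$ with $\hgt(\gtm_1^*)\ge2$ and $\hgt(\gtm_1)=0$, and then Corollary~\ref{ppp} (no prime strictly between $\gtm_\alpha\cap{\mathcal S}^*(M)$ and $\gtm_\alpha^*$ for a formal path $\alpha$) rules out $\gtm_1^*\in\widehat\partial M$. Your approach is instead constructive and valuation-theoretic: you build $\gtm^*$ from a point of $M_F$ over a real closed $F$ whose natural valuation has value group of rank~$2$, and separate it from every formal-path ideal by the explicit test functions $h,f_k$, which detect that the asymptotic ``scale'' $\omega(\beta_2)/\omega(\beta_1)$ of a formal path is a single rational number while $\q$ carries a genuinely two-step infinitesimal. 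The paper's argument is much shorter and leverages the machinery of \cite{fe1} and Corollary~\ref{ppp}; yours is self-contained, exhibits a concrete witness, and makes the obstruction (rank of the value group) visible. Both are valid; note only that your $\chi$ must be chosen so that $\{\chi=1\}\subset\{x_1>0\}$ for the denominators to be strictly positive on all of $M$, and that your case analysis for $\beta$ actually forces $\chi\circ\beta\equiv1$ (from $\varphi_\beta(f_k)=1$ for all $k$) before the clean order comparison $\omega(\beta_2)\le e\,\omega(\beta_1)<\omega(\beta_2)$ goes through---your sketch is right in spirit but the line ``if $\beta_1\le0$ one gets $\varphi_\beta(h)=1$'' is not quite the correct dichotomy.
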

\begin{proof}
Suppose first that $M^{\geq2}$ is compact. The finiteness of $\partial M$ follows from \cite[5.17]{fg5}; hence, by Lemma \ref{deltam}(iii) $\widetilde{\partial}M=\partial M$.

Conversely, suppose that $M^{\geq2}$ is not compact. By \cite[7.1(i)]{fe1} there exists a point $p\in\cl_{\R^m}(M^{\geq2})\setminus(\cl_{\R^m}(\rho_1(M^{\geq2}))\cup M^{\geq2})$. Let $C:=\cl_M(M\setminus M^{\geq2})$, for which $\rho_1(C)=\varnothing$ by Remark \ref{curvas}; hence, $\rho_1(M)=\rho_1(M^{\geq2})$. Moreover, $M^{\geq 2}$ is closed in $M$ and so $p\in\cl_{\R^m}(M)\setminus(\cl_{\R^m}(\rho_1(M))\cup M)$ and $\dim_p(\cl_{\R^m}(M))\geq2$. By \cite[7.1(ii)]{fe1} there exists a maximal ideal $\gtm_1^*$ of ${\mathcal S}^*(M)$ of height $\geq 2$ such that $\hgt(\gtm_1)=0$. This implies by Corollary \ref{ppp} that $\gtm_1^*\in\partial M\setminus\widehat{\partial}M$, as required.
\end{proof}

\begin{lem}[Behavior of the operators $\widetilde{\partial}$ and $\widehat{\partial}$]\label{delrel}
Assume that $M$ is bounded and let $Y\subset M$ be a closed semialgebraic subset of $M$. Let $C$ be the closure of the set of points of $M$ of local dimension $\leq 1$ in $M$. Since the semialgebraic sets $Y$, $M^{\geq 2}$ and $C$ are closed in $M$, we identify 
$$
\cl_{\betaa M}(Y)\equiv\betaa Y,\quad\cl_{\betaa M}(M^{\geq2})\equiv\betaa M^{\geq 2}\quad\text{and}\quad\cl_{\betaa M}(C)\equiv\betaa C.
$$ 
Then
\begin{itemize}
\item[(i)] $\widetilde{\partial} Y=\widetilde{\partial}M\cap\partial Y$ and $\widehat{\partial} Y=\widehat{\partial}M\cap\partial Y$.
\item[(ii)] $\partial M=\partial M^{\geq2}\sqcup\partial C$. 
\item[(iii)] $\widetilde{\partial} M=\widetilde{\partial}M^{\geq 2}\sqcup\widetilde{\partial} C$ and $\widehat{\partial} M=\widehat{\partial}M^{\geq 2}\sqcup\widehat{\partial} C$.
\end{itemize}
\end{lem}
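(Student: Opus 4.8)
The three parts are not independent: (ii) is a formal consequence of \ref{closedbeta1} and Remark \ref{curvas}, and (iii) follows from (i) together with (ii), so the heart of the matter is (i). For (ii) I would use that, by Remark \ref{curvas}(ii), $M^{\geq2}$ and $C$ are closed in $M$, that $M=M^{\geq2}\cup C$ and that $M^{\geq2}\cap C$ is finite; then $\betaa M=\cl_{\betaa M}(M^{\geq2})\cup\cl_{\betaa M}(C)$, and by \ref{closedbeta1}(ii) the overlap $\cl_{\betaa M}(M^{\geq2})\cap\cl_{\betaa M}(C)=\cl_{\betaa M}(M^{\geq2}\cap C)=M^{\geq2}\cap C$ is a finite subset of $M$ (equal to its closure in $\betaa M$). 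Since $M^{\geq2}$ and $C$ are closed in $M$, after the identifications in \ref{closedbeta1}(i) we get $\partial M^{\geq2}=\cl_{\betaa M}(M^{\geq2})\cap\partial M$ and $\partial C=\cl_{\betaa M}(C)\cap\partial M$; intersecting the cover of $\betaa M$ with $\partial M$ yields $\partial M=\partial M^{\geq2}\cup\partial C$, which is disjoint because $\partial M^{\geq2}\cap\partial C\subset(M^{\geq2}\cap C)\cap\partial M=\varnothing$. For (iii) I would apply (i) with $Y=M^{\geq2}$ and with $Y=C$, so that $\widetilde{\partial}M^{\geq2}=\widetilde{\partial}M\cap\partial M^{\geq2}$, $\widetilde{\partial}C=\widetilde{\partial}M\cap\partial C$ (and likewise for $\widehat{\partial}$), and then intersect $\widetilde{\partial}M\subset\partial M$ (resp. $\widehat{\partial}M\subset\partial M$) with the decomposition (ii).

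For (i) both equalities are proved identically; I describe $\widehat{\partial}Y=\widehat{\partial}M\cap\partial Y$, the case of $\widetilde{\partial}$ being the same with $F_1$ replaced by $F_0$ and ``formal'' by ``semialgebraic''. As $Y$ is closed in $M$ we have $\cl_{\R^m}(Y)\cap M=Y$, hence $\partial Y=\cl_{\betaa M}(Y)\cap\partial M$; and since the restriction ${\mathcal S}^*(M)\to{\mathcal S}^*(Y)$ is surjective with kernel $\mathfrak{b}_Y^*$, the points of $\cl_{\betaa M}(Y)$ are exactly the maximal ideals of ${\mathcal S}^*(M)$ containing $\mathfrak{b}_Y^*$ (by \ref{closedbeta1}(i)). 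For the inclusion $\widehat{\partial}Y\subset\widehat{\partial}M\cap\partial Y$: given a formal path $\alpha\in Y_{F_1}$ with $\alpha(0)\in\cl_{\R^m}(Y)\setminus Y$, one has $\alpha(0)\notin M$, so $\alpha$ also defines a point of $\widehat{\partial}M$; by Transfer and the uniqueness of $\psi_\alpha$ recalled in \ref{eoc}, $\psi_\alpha^M$ factors through the restriction ${\mathcal S}(M)\to{\mathcal S}(Y)$ as $\psi_\alpha^Y$, and passing to bounded functions the homeomorphism $\betaa\,{\tt j}:\betaa Y\to\betaa M$ (onto $\cl_{\betaa M}(Y)$) carries $\gtm_{\alpha,Y}^*$ to $\gtm_{\alpha,M}^*$, a point of $\widehat{\partial}M\cap\partial Y$.

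The substantial inclusion is $\widehat{\partial}M\cap\partial Y\subset\widehat{\partial}Y$. Let $\gtm^*=\gtm_{\alpha,M}^*$ with $p:=\alpha(0)\in\cl_{\R^m}(M)\setminus M$ and $\mathfrak{b}_Y^*\subset\gtm^*$; replacing $\alpha(\t)$ by $\alpha(\t^N)$ (which changes neither $\gtm^*$ nor $\cl_{\betaa M}(Y)$) I may assume $\alpha\in\R[[\t]]^m$, so $\omega(\|\alpha-p\|)\geq1$. The plan is to prove $\alpha\in Y_{F_1}$; then $\alpha$ is a formal path into $Y$ with $\alpha(0)\in\cl_{\R^m}(Y)\setminus Y$ (its limit lies in $\cl_{\R^m}(Y)$ by Corollary \ref{compact}(i) applied to $Y$, and outside $Y$ because $p\notin M$), so $\gtm_{\alpha,Y}^*\in\widehat{\partial}Y$ and $\betaa\,{\tt j}$ sends it to $\gtm^*$ by the computation above, giving $\gtm^*\in\widehat{\partial}Y$ under the identification. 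To see $\alpha\in Y_{F_1}$, suppose not and put $\delta:=\dist_{F_1}(\alpha,Y_{F_1})>0$ (the infimum attained, $Y_{F_1}$ closed and bounded, by the Transfer Principle) and $e:=\omega(\delta)\in\Q_{\geq0}$. If $e=0$, then $d:=\dist(\cdot,Y)\in{\mathcal S}^*(M)$ lies in $\mathfrak{b}_Y^*$ but $\widehat{d}(\gtm^*)=(\psi_\alpha(d))(0)=\delta(0)\neq0$, a contradiction. If $e>0$, set
$$
h(x):=\frac{\dist(x,Y)}{\dist(x,Y)+\|x-p\|^{e}}\qquad(x\in M);
$$
this is a bounded semialgebraic function on $M$ (its denominator vanishes only at $p\notin M$) vanishing on $Y$, so $h\in\mathfrak{b}_Y^*$, whereas $\psi_\alpha(h)=\delta/(\delta+\|\alpha-p\|^{e})$ has order $\omega(\delta)-\omega(\delta+\|\alpha-p\|^{e})=e-e=0$ — because $\omega(\|\alpha-p\|^{e})=e\,\omega(\|\alpha-p\|)\geq e$ and the leading coefficients of $\delta$ and of $\|\alpha-p\|^{e}$ are positive, so no cancellation occurs — hence $\widehat{h}(\gtm^*)=(\psi_\alpha(h))(0)\neq0$, again contradicting $\mathfrak{b}_Y^*\subset\gtm^*$. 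I expect this last case to be the main obstacle: one has to manufacture a bounded semialgebraic function vanishing on $Y$ whose value along $\alpha$ is nonetheless bounded away from $0$ as soon as $\alpha$ leaves $Y_{F_1}$, and lining up the orders is exactly where one uses that a genuine formal path parametrized by $\R[[\t]]$ separates from its limit at order $\geq1$; everything else is bookkeeping with \ref{closedspec}--\ref{closedbeta1} and the definitions in \ref{mifp}--\ref{misp}.
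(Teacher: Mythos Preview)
Your proof is correct and follows essentially the same strategy as the paper's. For the key inclusion $\widehat{\partial}M\cap\partial Y\subset\widehat{\partial}Y$ in (i), both you and the paper derive a contradiction by manufacturing a bounded semialgebraic function vanishing on $Y$ whose ``value'' at $\gtm_\alpha^*$ is nonzero: the paper takes an arbitrary $g\in{\mathcal S}^*(M)$ with $g|_Y\equiv0$ and $\psi_\alpha(g)\neq0$ and normalizes it as $f(x)=g^2(x)/\bigl(g^2(x)+\|x-\alpha(0)\|^{2(p/q)+1}\bigr)$ (with $p=\omega(\psi_\alpha(g))$, $q=\omega(\|\alpha-\alpha(0)\|)$), whereas you use $g=\dist(\cdot,Y)$ directly and the exponent $e=\omega(\delta)$; the order computations are the same up to this cosmetic change. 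Your treatment of (ii) via \ref{closedbeta1}(ii) and the finiteness of $M^{\geq2}\cap C$ is actually more explicit than the paper's one-line remark about connected components of $\partial M$, and your derivation of (iii) from (i)+(ii) is exactly what the paper intends.
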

\begin{proof}
(i) Let us check first $\widehat{\partial} Y=\widehat{\partial}M\cap\partial Y$. For the non-obvious inclusion let $\gtm^*_\alpha\in\widehat{\partial}M\cap\partial Y$. Suppose by contradiction that $\gtm^*_\alpha\not\in\widehat{\partial} Y$, that is, $\alpha\not\in Y_{F_1}$. Thus, $\alpha\in(M\setminus Y)_{F_1}$ and there exists $g\in{\mathcal S}^*(M)$ such that $\alpha\in(D_M(g))_{F_1}\subset(M\setminus Y)_{F_1}$. In particular, $g|_{Y}\equiv 0$ and $\psi_{\alpha}(g)\neq0$. Write $\psi_{\alpha}(g):=a\t^p+\cdots$ for some $a\neq 0$ and a non-negative rational number $p$ and $\|\alpha(\t)-\alpha(0)\|:=b\t^q+\cdots$ for some $b\neq 0$ and a positive $q\in\Q$. Recall that $\alpha(0)\not\in M$ because $\gtm_\alpha^*\in\partial Y$ and consider the bounded semialgebraic function
$$
f:M\to\R,\ x\mapsto\frac{g^2(x)}{g^2(x)+\|x-\alpha(0)\|^{2(p/q)+1}},
$$
which vanishes identically on $Y$ and satisfies $\psi_{\alpha}(f)(0)=1$. Thus, $f\in\ker\psi\setminus\gtm^*_\alpha$ where $\psi:{\mathcal S}^*(M)\to{\mathcal S}^*(Y),\,h\mapsto h|_Y$. This contradicts the fact that $\gtm^*_\alpha\in\partial Y\equiv\cl_{\betaa M}(Y)\setminus Y$ because $\cl_{\betaa M}(Y)$ is the collection of those maximal ideals of ${\mathcal S}^*(M)$ containing $\ker\psi$ by \cite[6.3]{fg3}. The first equality in (i) follows from the equality already proved above after taking into account that the semialgebraic character of a formal path does not depend on the semialgebraic set where it is considered.

Statement (ii) follows by considering the connected components of $\partial M$ and noticing that the union of the ones that are singletons belongs to $\partial C$. Statement (iii) follows easily from (ii).
\end{proof}

\begin{remark}
The assumption $M=M^{\geq2}$ in Proposition \ref{diferencias-s} is not restrictive. Let $C$ be the closure of the set of points of $M$ of local dimension $\leq 1$ in $M$. Since $\dim(C)\leq 1$, we deduce from Lemma \ref{pppoints0} that $\widetilde{\partial}C=\widehat{\partial}C=\partial C$. By Lemma \ref{delrel} we obtain
$$
\partial M\setminus\widehat{\partial}M=\partial M^{\geq2}\setminus\widehat{\partial}M^{\geq2}\quad\text{and}\quad\widehat{\partial}M\setminus \widetilde{\partial}M=\widehat{\partial}M^{\geq2}\setminus \widetilde{\partial}M^{\geq2},
$$
so Proposition \ref{diferencias-s} is conclusive.
\end{remark}

\begin{proof}[Proof of Proposition \em\ref{diferencias-s}]
Our aim is to reduce the problem to prove: 

\vspace{1mm}\setcounter{substep}{0}
\begin{substeps}{diferencias-s}\label{redtrianT}
\em The sets $\partial T\setminus\widehat{\partial}T$ and $\widehat{\partial}T\setminus \widetilde{\partial}T$ are not empty for the punctured triangle 
\begin{center}
$T:=\{(x,y)\in\R^2:\,0\leq y\leq x\leq1\}\setminus\{(0,0)\}.$
\end{center}
\end{substeps}

Suppose \ref{diferencias-s}.\ref{redtrianT} is already proved and let us show the statement under the assumption that $M$ is bounded. Let $f\in {\mathcal S}^*(M)$ be such that $\di_{\betaa M}(f)$ meets $\partial M$ and $\widehat{f}:\betaa M\to\R$ be the (unique) continuous extension of $f$ to $\betaa M$. Since $\widetilde{\partial}M$ is dense in $\partial M$ (see Lemma \ref{deltam}(iii)), $\widetilde{\partial}M$ meets $\di_{\betaa M}(f)$, too. Let $\gtm_\alpha^*\in\widetilde{\partial}M\cap\di_{\betaa M}(f)$ and $c:=\widehat{f}(\gtm^*_\alpha)\neq0$ that we assume to be $>0$. Thus, $\gtm^*_\alpha\in{\mathcal D}_{\betaa M}(f-\tfrac{c}{2}+|f-\tfrac{c}{2}|)$. Substituting $M$ by $\gr(f)$, we may assume that $f$ can be extended continuously to $X:=\cl_{\R^m}(M)$; denote such extension with $f$. By \cite[4.3\&4.6]{fg5} there exists a continuous surjective map $\rho:\betaa M\to X$, which is the identity on $M$, and $\rho(\partial M)=X\setminus M$. Moreover, $\widehat{f}=f\circ\rho$ and $p:=\rho(\gtm_\alpha^*)\in X\setminus M$ satisfies $f(p)=c$. 

Define $Y_0:=\{p\}$, $Y_1:=\{f-\tfrac{c}{2}>0\}$ and $Y_2:=M\cap Y_1$. By \cite[9.2.1]{bcr} there exists a finite simplicial complex $K$ and a semialgebraic homeomorphism $\Phi:|K|\to X$ such that each semialgebraic set $Y_j$ is the union of some $\Phi(\sigma^0)$ where each $\sigma^0$ is the open simplex associated with a simplex $\sigma\in K$. We identify $X$ with $|K|$ and choose a simplex $\tau$ of $K$ of dimension $\geq2$ that has $p$ as a vertex and whose associated open simplex $\tau^0$ is contained in $Y_2$. Let $p_1,p_2\in\tau^0$ be two points that are not colinear with $p$. For the closed triangle $T_1$ with vertices $p,p_1,p_2$ it holds that $T_1\setminus\{p\}\subset\tau\subset Y_2$ is a closed semialgebraic subset of $M$. Moreover, the remainder satisfies $\partial T_1\subset\di_{\betaa M}(f)$. Thus, the differences $\partial T_1\setminus\widehat{\partial}T_1$ and $\widehat{\partial}T_1\setminus\widetilde{\partial}T_1$ are by \ref{diferencias-s}.\ref{redtrianT} not empty and the open set $\di_{\betaa M}(f)$ meets the differences $\partial M\setminus\widehat{\partial}M$ and $\widehat{\partial}M\setminus \widetilde{\partial}M$ by Lemma \ref{delrel}, as required.

Let us prove \ref{diferencias-s}.\ref{redtrianT} next. By Lemma \ref{pppoints0} we obtain $\partial T\setminus\widehat{\partial}T\neq\varnothing$. In order to prove $\widehat{\partial}T\setminus\widetilde{\partial}T\neq\varnothing$ choose the formal series $\alpha_1(\t)=\t$ and $\alpha_2(\t)=\sum_{n\geq2}n!\t^n\in\R[[\t]]\setminus\R[[\t]]_{\rm alg}$ and the formal path $\alpha:=(\alpha_1,\alpha_2)\in\R[[\t]]^2$. Note that $\alpha\in T_{F_1}$ and let us show $\gtm_\alpha^*\in\widehat{\partial}T\setminus\widetilde{\partial}T$. Indeed, for each $k\geq 2$ consider the function $f_k\in{\mathcal S}^*(T)$ given by the formula
$$
f_k(x,y):=\frac{(y-p_k(x))^2}{(y-p_k(x))^2+x^{2k}}\qquad\text{where $p_k(x):=\sum_{n=2}^{k}n!x^n$} .
$$
For each evaluation we have $\psi_{\alpha}(f_k)(0)=0$, so we deduce $f_k\in\gtm^*_\alpha$ for all $k\geq2$ (see \ref{mifp}). Suppose now that $\gtm_\alpha=\gtm_\mu$ for some $\mu\in\R[[\t]]_{\rm alg}^2$ with $\mu\in T_{F_1}$. To obtain a contradiction, it is enough to check that $f_k\not\in\gtm_\mu$ for some $k\geq2$. Without loss of generality and after reparametrizing $\mu$, we may assume $\mu(\t)=(\t^j,\mu_2(\t))$ for some integer $j\geq1$ and some analytic series $\mu_2(\t)\in\R[[\t]]_{\rm alg}$ whose order is $\geq j$. As the series $\alpha_2(\t^j)$ is not analytic, for the difference it holds $\alpha_2(\t^j)-\mu_2(\t)\neq 0$ and its order is $k\geq 1$. Thus, $\psi_\mu(f_k)(0)\neq 0$ and so $f_k\not\in\gtm_\mu$, as wanted.
\end{proof}

\section*{Acknowledgments}
We are indebted to S. Schramm for a careful reading of the final version and for the suggestions to refine its redaction.

\bibliographystyle{amsalpha}

\end{document}